\newlist{nested}{enumerate}{5}
\setlist[nested]{nosep,labelwidth=*,align=Case,label*=.\arabic*}
\setlist[nested,1]{label=\arabic*}
\newtheorem{proposition}{Proposition}
\newtheorem{lemma}{Lemma}
\newtheorem{theorem}{Theorem}
\newtheorem{observation}{Observation}
\newtheorem{corollary}{Corollary}
\theoremstyle{definition}
\newtheorem{definition}{Definition}
\theoremstyle{remark}
\newtheorem{remark}{Remark}
\newcommand{\dro}{DRO}    
\newcommand{\droV}{DRO-V}
\title{Effective Scenarios in Multistage Distributionally Robust Optimization with a Focus on Total Variation Distance}
\author[1]{Hamed Rahimian\thanks{\texttt{hrahimi@clemson.edu}}}
\author[2]{G\"{u}z\.{i}n Bayraksan\thanks{\texttt{bayraksan.1@osu.edu}}}
\author[3]{Tito Homem-de-Mello\thanks{\texttt{tito.hmello@uai.cl}}}
\affil[1]{Department of Industrial Engineering, Clemson University, Clemson SC 29634, USA}
\affil[2]{Department of Integrated Systems Engineering, The Ohio State University, Columbus OH 43210, USA}
\affil[3]{School of Business, Universidad Adolfo Iba\~{n}ez, Santiago, Chile}
\date{}
\newcommand{\suchthat}{\text{s.t.}}
\DeclareMathOperator*{\argmax}{arg\,max}
\newcommand{\conv}[1]{\text{conv}\left(#1\right)}  
\DeclarePairedDelimiterX\sset*[2]{\lbrace}{\rbrace}%
{ #1 \,\delimsize| \,\mathopen{} #2 }
\newcommand{\ee}[2]{\mathbb{E}_{#1} \left[ #2 \right]}
\newcommand{\vvar}[2]{\mathrm{VaR}_{#1} \left[ #2 \right]}
\newcommand{\ccvar}[2]{\mathrm{CVaR}_{#1} \left[ #2 \right]}
\newcommand{\rro}[2]{\rho_{#1} \left[ #2 \right]}
\newcommand{\bs}[1]{\boldsymbol{#1}} 
\newcommand{\Bs}[1]{\mathbb{#1}} 
\newcommand{\Cs}[1]{\mathcal{#1}} 
\newcommand{\measurespace}{\left( \Omega, \Cs{F} \right)}
\newcommand{\hxi}[1]{\xi_{[#1]}}
\newcommand{\hXi}[1]{\Xi_{[#1]}}
\newcommand{\hx}[1]{x_{[#1]}}
\newcommand{\hstarx}[1]{x^{*}_{[#1]}}
\newcommand{\hbarx}[1]{\bar{x}_{[#1]}}
\newcommand{\fM}[1]{\mathfrak{M}_{#1+1|\hxi{#1}}}
\newcommand{\condbs}[2]{\bs{#1}_{#2+1|\hxi{#2}}}
\newcommand{\condbsnode}[2]{\bs{#1}_{#2+1|\omega_{#2}}}
\newcommand{\cPcond}[1]{\Cs{P}_{#1+1|\hxi{#1}}}
\newcommand{\cPcondnode}[1]{\Cs{P}_{#1+1|\omega_{#1}}}
\newcommand{\cPcondA}[1]{\Cs{P}^{\text{A}}_{#1|\hxi{#1-1}}}
\newcommand{\cPcondAnode}[1]{\Cs{P}^{\text{A}}_{#1|\omega_{#1-1}}}
\newcommand{\cPcondCA}[1]{\Cs{P}^{\text{CA}}_{#1+1|\hxi{#1}}}
\newcommand{\cPcondCAnode}[1]{\Cs{P}^{\text{CA}}_{#1+1|\omega_{#1}}}
\newcommand{\rhocond}[1]{\rho_{#1|\hxi{#1-1}}}
\newcommand{\cat}[2]{\Cs{A}_{#1}(#2)}
\begin{document}
	\maketitle
	\begin{abstract}
		We study multistage distributionally robust optimization (DRO) to hedge against ambiguity in quantifying the underlying uncertainty of a problem.
		Recognizing that not all the realizations and scenario paths might have an ``effect" on the optimal value, we investigate the question of how to define and identify critical scenarios for nested multistage DRO problems.
		Our analysis extends the work of Rahimian, Bayraksan, and Homem-de-Mello   [Math. Program. 173(1--2): 393--430, 2019], which was in the context of a static/two-stage setting, to the multistage setting. 
		To this end, we define the notions of effectiveness of scenario paths and the conditional effectiveness of realizations along a scenario path for a general class of  multistage DRO problems. We then propose easy-to-check conditions to identify the effectiveness of scenario  paths in the multistage setting when the distributional ambiguity is modeled via the total variation distance. Numerical results show that these notions provide useful insight on the underlying uncertainty of the problem. 
		
		
		\medskip
		
		\noindent
		\textbf{Keywords:} Multistage distributionally robust optimization, Effective scenarios, Total variation distance 
	\end{abstract}

	\section{Introduction}\label{sec: MULTI.intro} 

	Many decision-making problems are dynamic and stochastic, where the realizations of the stochastic process are revealed  over time and the decisions are made sequentially with the available information \citep{powell2019unified}. 
	Such problems have been modeled using multistage stochastic programming  \citep{Stochastic,Shapiro_Lecture_SP} and adjustable robust optimization \citep{ben2004adjustable}, among others.
	These approaches traditionally assume that the  distribution of the stochastic process is known. However, this is rarely true in real life. 
	Motivated by the fact that, in practice, a modeler might have some---albeit imperfect---distributional information about the uncertain parameters, a modeling approach, referred to as {\it distributionally robust}, has recently attracted considerable attention \cite{rahimian2019distributionally}.
	Pioneered by \cite{scarf1958}, such an approach assumes that the underlying probability distribution  is unknown and lies in an {\it ambiguity set} of probability distributions. When used in a decision-making framework, the distributionally robust  optimization (\dro) approach may protect  the decision-maker from the ambiguity in the underlying probability 
	\citep{delage2010,bertsimas2010minmax}. 
	
	In this paper, we study a \dro\  approach to a sequential decision-making problem in the spirit of multistage stochastic programming. We refer to this model as multistage \dro. 
	Most of the papers in this area focus on the computational aspects, proposing decomposition algorithms like specialized nested Benders' decomposition \citep[e.g.,][]{park2020multistage} or stochastic dual dynamic programming \citep[e.g.,][]{huang2017,philpott2018distributionally,duque2020distributionally,yu2020multistage}, or modeling aspects on how to form the ambiguity sets  \citep[e.g.,][]{Bertsimas2021,pflug2014}. In this paper, we study multistage \dro\ from a different point of view. Our aim is to gain insight into the scenarios that can impact the optimal value. 
	
	
	While the  literature on static/two-stage DRO models is quite mature by now  \cite[e.g.,][]{rahimian2019distributionally,kuhn2019wasserstein,bayraksan2015}, there are relatively few papers on multistage DRO. 
	Some works consider moment-based models \citep{see2010robust,shapiro2020time,bertsimas2019adaptive,yu2020multistage}, and others consider discrepancy-based models: nested Wasserstein distance \citep{pflug2014}, $\infty$-Wasserstein \citep{Bertsimas2021}, $\chi^2$ distance \citep{klabjan2013robust}, modified $\chi^2$ distance \citep{philpott2018distributionally}, $L_{\infty}$ norm \citep{huang2017}, and general class of $\phi$-divergences \citep{park2020multistage}.  In \cite{duque2020distributionally}, the authors allow for the inclusion of moment constraints as well as discrepancy-based constraints, such as Wasserstein and total variation metrics. 
	In \cite{silva2021HMM}, the authors study data-driven multistage problems in which the input process can be represented by a Hidden Markov Model (HMM), and apply DRO to account for estimation errors of the HMM transition matrix.
	The authors in \cite{shapiro2020tutorial} and \cite{pichler2021mathematical} study general multistage \dro\ models  from the   theory of rectangular ambiguity sets and the connection to decomposability and time consistency of the resulting dynamic risk measures.

	Modeling distributional ambiguity in a multistage \dro\ is more complicated than the static/two-stage setting, and it is tied to the time  consistency  of the policies \citep[e.g.,][]{ruszczynski2010risk,carpentier2012dynamic,shapiro2009time}. 
	In order to enforce  time consistency of a multistage \dro, we model the ambiguity in a nested way (see Section \ref{sec: MULTI.DP}). That is, at each stage, we form a {\it conditional} ambiguity set, conditioned on the available information so far, to model the ambiguity in the distribution of the next stage's uncertain parameters. 
	Associated with an optimal policy to such a formulation is a worst-case probability distribution. 
	What does this distribution look like? What information can be gained from it? 
	Such questions are raised in the context of static/two-stage \dro\ models in \cite{rahimian2019}, and in this paper, we study similar questions for a multistage \dro. 
	
	Assuming a stochastic process with a finite number of scenarios, the authors in \cite{rahimian2019} define the notions of {\it effective} and {\it ineffective}  scenarios for a  static/two-stage \dro\ (reviewed in Section \ref{sec: MULTI.background_def}). 
	Effective scenarios are those that lead to
	changes in the optimal value if removed from the ambiguity set (more precisely, forced to have a zero probability). Likewise, ineffective
	scenarios are those that can be removed from the ambiguity set without causing any change to the optimal value.
	It is shown in \cite{rahimian2019}  that one cannot determine whether
	a scenario is effective or not simply by looking at the worst-case probabilities. Indeed, it is possible to have an effective scenario with a zero worst-case probability and an ineffective scenario with a positive worst-case probability. Such a phenomenon is later tied to the existence of a strict monotone risk measure in  \cite{shapiro2020tutorial}, a concept related to the  time consistency of risk-averse multistage models. 
	In \cite{rahimian2019}, the authors provide easy-to-check conditions to identify effective/ineffective scenarios when the distributional ambiguity is modeled via the total variation distance around a {\it nominal} distribution, and in \cite{rahimian2019controlling}, these are generalized  to static/two-stage \dro\ with  continuous distributions in the context of inventory problems.  
	These easy-to-check conditions provide computational and practical advantages over the na\"{i}ve resolving  of the corresponding problems multiple times (once they are forced to have a zero probability in the ambiguity set) and observing the cases in which there were any changes to the optimal value. 
	In \cite{arpon2018scenario}, the idea of effective scenarios is applied to risk-averse optimization problems, thereby yielding a \textit{scenario reduction} method for such problems.  
	
	The notion of effective scenarios resonates with related concepts proposed in the literature, some of them quite recently. One such concept is that of a \textit{coreset} \citep{agaHV:05}:  given a set  of points $\Omega$, a coreset is ``an easily computable subset $\Cs{S} \subset \Omega$,  so that solving the underlying problem on $\Cs{S}$ gives an approximate solution to the original problem," an idea that has found  applications particularly in some statistical learning methods such as clustering, k-medians, and regression, among others. Another related concept is that of \textit{supporting constraints} \citep{camgar:18}, defined for a class of problems in which each constraint  corresponds to a scenario. In that context, a constraint is said to be supporting  if its removal changes the optimal solution of the problem. The authors use this notion to estimate  the probability that, given a problem with $m$ constraints, a new randomly  selected $(m+1)$th constraint  is violated by the optimal solution of that problem. The idea of effective scenarios can also be viewed from the perspective of \textit{problem-driven scenario reduction}, where the goal is to find what are the ``essential" scenarios that must be kept from an original set of scenarios  so that solving the problem over the reduced set yields the same optimal solution (or optimal value) as the original one; see  \cite{fairbrother2019problem} for the development of such ideas in the context of risk-averse optimization problems, leading to the notion of \textit{risk regions} which turn out to be very much related to the results in \cite{arpon2018scenario}. In \cite{mundru2020}, a new distance between two distributions that considers the objective function values is defined for problem-driven scenario reduction in two-stage problems. 
	All these concepts notwithstanding, to the best of our knowledge, no concept related to the notion of effective scenarios has been proposed in the context of \textit{multistage} problems.

	In this paper, we generalize the notion of effective/ineffective scenarios to a multistage \dro. As one might anticipate, these notions are nontrivial and more subtle in a multistage setting and several questions arise, even under a finite stochastic process: 
	\begin{enumerate}
		\item Should we study them for the entire realizations of the stochastic process---a {\it scenario path}---for a finite stochastic process represented by a scenario tree? 
		\item Should we study them for the realizations along a scenario path, at each stage {\it separately}? 
		\item Is there any connection between the effectiveness of a scenario path and the effectiveness of  realizations along the path? 
	\end{enumerate}
	In this paper, in agreement with our desire to have a time-consistent  multistage \dro\ model, we limit ourselves to a nested formulation and provide rigorous mathematical definitions for both the effectiveness of a scenario path  and the effectiveness of separate realizations along a path. 
	The effectiveness of a scenario path is defined with respect to  a change in the optimal value once that scenario path is removed (Section \ref{sec: MULTI.gen_eff_path}). On the other hand, the effectiveness of a realization along a scenario path is defined {\it conditionally}, given the available information (Section \ref{sec: MULTI.gen_eff_realization}).  
	Henceforth, we refer to them as {\it conditionally effective/ineffective} realizations. 
	To provide an answer to the third question above,  we focus on a multistage \dro, where the  conditional distributional ambiguity is modeled via the total variation distance (referred to as a multistage \droV). 
	In this setting, we provide an affirmative answer; yes. 
	
	By exploiting the structure of the conditional ambiguity sets formed via the total variation distance, we show that the easy-to-check  conditions proposed in \cite{rahimian2019} are applicable to identify the conditional effectiveness of realizations along a scenario path. 
	We then show that a scenario path is effective if and only if all realizations along the path are conditionally effective (Theorem \ref{thm: MULTI.path_eff_ineff}). This result is interesting---and somewhat surprising---because it says that checking effectiveness in a one-step-ahead fashion suffices to determine the effectiveness of the entire path.
	

	Besides providing a rigorous answer to the third question above, which yields both a practical/computational advantage and a conceptual/managerial perspective in identifying effectiveness of scenario paths, 
	our choice to work with  the total variation distance has other benefits.
	The first one is risk interpretation: As we shall see in Section \ref{sec: MULTI.risk}, when the total variation distance is used to define the conditional ambiguity sets, the resulting multistage \dro\ is equivalent to minimizing a composite coherent risk measure, consists of nested coherent risk measures. 
	The second benefit is computational: 
	The problem reduces to a  computationally tractable optimization model and is amenable to  a numerical decomposition algorithm. %
	This is of particular importance in a multistage setting as the stochastic process is typically discretized and hence, it gives a computational advantage over other metrics such as a general Wasserstein distance. 
	Finally, the total variation distance resides in the intersection of three classes of discrepancy measures 
	commonly used in DRO: It is a metric, a $\phi$-divergence, and a special case of the Wasserstein distance when the distance between two scenarios is $0$ (when the scenarios are the same) or $1$ (when they are different). 
	
	To summarize, the contributions of this paper  are as follows.
	%
	We introduce, to the best of our knowledge for the first time, the notions of effectiveness for realizations and scenario paths for a general class of nested multistage \dro\ 
	under a finite stochastic process.  
	%
	%
	We provide easy-to-check conditions to identify the conditional effectiveness of realizations for multistage  \droV. 
	Our main result ties the effectiveness of a scenario path to the conditional effectiveness of realizations along the path. 
	%
	We numerically illustrate how the concepts introduced and 
	the main results of the paper
	provide managerial insight to the underlying uncertainty of a multistage \dro.  
	

	The rest of this paper is outlined as follows. 
	In Section \ref{sec: background}, we  define the class of multistage \dro\ problems we are interested in, establish notation, and review background information on effective scenarios for a static  \dro. 
	In Section \ref{sec: MULTI.effective_scens}, we define the notions of effectiveness of scenario paths and conditional effectiveness of realizations for a multistage \dro. 
	In Section \ref{sec: MULTI.TV}, we define a multistage \droV\   and present its risk interpretation. 
	Section \ref{sec: MULTI.thms} provides easy-to-check conditions to identify the conditional effectiveness of realizations and the effectiveness of scenario paths for a multistage \droV. 
	We then present numerical experiments and provide managerial insights in Section \ref{sec: MULTI.numerics}. 
	Finally, we end with conclusions  in Section \ref{sec: MULTI.discuss}.

	\section{Background and Notation} 
	\label{sec: background}
	\subsection{Multistage DRO}
	\label{sec: MULTI.DP}
	We study a class of  multistage stochastic optimization problems with $T > 1$ stages. To formally define the class of problems we are interested in, consider a measurable space $\measurespace$. The uncertain parameters are gradually realized over time and are represented by a stochastic process $\xi:=(\xi_1, \dots, \xi_T)$ on the probability space, where $\xi_{t}:\Omega \mapsto \Xi_{t} \subseteq \Bs{R}^{d_t}$ and is composed of the random parameters in stage $t$. We assume that $\xi_{t}$, $t=1, \ldots, T$,  has  finitely many possible realizations, 
	where $\xi_1$ is a degenerate random vector (i.e., constant). 
	Let $\hxi{t}:=(\xi_{1}, \ldots, \xi_{t})$ denote  the history of the stochastic process up to (and including) time $t$. The filtration $\{\Cs{F}_{t}\}_{t=1}^{T}$  associated with $\xi$ is defined by $\Cs{F}_{t}:=\sigma(\hxi{t})$, where $\sigma(\hxi{t})$ is the $\sigma$-algebra generated by $\hxi{t}$, and $\{\emptyset, \Omega\}=\Cs{F}_1 \subset \Cs{F}_2 \subset \ldots \subset \Cs{F}_T=\Cs{F}$. 
	
	As information on the stochastic process $\xi$  becomes available at stage $t$, a decision must be made based on the available information so far.  These decisions define a {\it decision rule} or {\it policy} $x:=[x_1, x_2, \ldots, x_T]$. 
	A basic  requirement of multistage stochastic optimization problems is the {\it unimplementablity} and {\it nonanticipativity} of policies  \citep[Chapter~3]{Shapiro_Lecture_SP}.  That is, decision  $x_t$, $t=1, \ldots, T$, must depend only on the information available up to time $t$ and must not depend  on   neither the future realizations of the stochastic process nor the future decisions. 
	Similar to $\hxi{t}$, we use $\hx{t}$ to denote a policy up to (and including) time $t$, i.e., $\hx{t}:=(x_1, \ldots, x_t)$.
	
	Consider a multistage \dro\ problem  of the form 
	\begin{equation}
	\label{eq: MULTI.robust}
	\begin{split}
	\min_{x_{1} \in \Cs{X}_{1}} \ g_{1}(x_{1}, \xi_{1}) & + \max_{\bs{p}_{2} \in \Cs{P}_{2|\hxi{1}}} \ \mathbb{E}_{\bs{p}_{2}}   \left[   \min_{x_{2} \in \Cs{X}_{2}}  g_{2}(x_{2}, \xi_{2}) + \max_{\bs{p}_3 \in \Cs{P}_{3|\hxi{2}}} \ \mathbb{E}_{\bs{p}_{3}} \Bigg[ \ldots +  \Bigg. \right. \\
	& \Bigg.  \left.  \max_{\bs{p}_{T} \in  \Cs{P}_{T|\hxi{T-1}}}  \ \mathbb{E}_{\bs{p}_{T}} \left[   \min_{x_{T} \in \Cs{X}_{T}} g_{T}(x_{T}, \xi_{T}) \right]  \ldots  \Bigg]  \right].
	\end{split}
	\tag{T-\dro}
	\end{equation}
	In stage $t$, $t=1, \ldots, T$, the set-valued mapping $\Cs{X}_{t}:=\Cs{X}_{t}(\hx{t-1}, \hxi{t}) \subset \Bs{R}^{n_{t}}$  denotes a non-empty, compact polyhedral  feasibility set, and $g_{t}: \Bs{R}^{n_t} \times \Bs{R}^{d_t} \mapsto \Bs{R} $ is a random, real-valued  polyhedral  function, with the decision $x_{t}$ and the realized uncertainty $\xi_{t}$ given\footnote{To simplify the exposition and focus on the concepts rather than technicalities, throughout we assume that $\Cs{X}_{t}$ is a polyhedral set and $g_{t}$ is a random polyhedral function, $t=1, \ldots, T$. Nevertheless, the results in this paper can be extended to general convex sets and functions under appropriate constraint qualification conditions.}. 
	For nonanticipativity, we force  $x_t=x_t(\hxi{t})$ to be $\Cs{F}_{t}$-measurable, $t=1, \ldots, T$, by assuming both $\Cs{X}_{t}(\hx{t-1}, \cdot)$ and $g_t(x_t, \cdot)$ are $\Cs{F}_{t}$-measurable.
	Moreover, $\cPcond{t} \subseteq \fM{t}$ is the  {\it conditional ambiguity set} for the conditional distribution of $\xi_{t+1}$, conditioned on $\hxi{t}$, $t=1, \ldots, T-1$. The set $\fM{t}$ contains all conditional probability distributions induced by $\xi_{t+1}$, given $\hxi{t}$. Also, $\ee{\condbs{p}{t+1}}{\cdot}$ denotes the conditional expectation  with respect to $\condbs{p}{t+1} \in \cPcond{t}$. 

	The inner maximization problems in \eqref{eq: MULTI.robust} hedge against the worst-case probability distribution in the conditional ambiguity set $\cPcond{t}$, $t=1, \ldots, T-1$. 
	Associated with an optimal policy $x^*:=[x_1^*,x_2^*, \ldots, x_T^*]$ to  \eqref{eq: MULTI.robust} is an optimal worst-case probability distribution $\bs{p}^*:=\bs{p}^*(x^*):=[\bs{p}^*_{2|\hxi{1}},\bs{p}^*_{3|\hxi{2}}, \ldots, \bs{p}^*_{T|\hxi{T-1}}]$. An optimal probability distribution $\bs{p}^*$, or in general, a (push-forward) probability distribution $\bs{p}:=[\bs{p}_{2|\hxi{1}},\bs{p}_{3|\hxi{2}}, \ldots, \bs{p}_{T|\hxi{T-1}}]$ induced by $\xi$ on $\measurespace$, should be understood in similar sense to a policy. That is, at each stage $t$, $t=1, \ldots, T-1$, given the available information  on $\hxi{t}$, the conditional probability of $\xi_{t+1}$ can be determined.

	
	While there are various ways to model the conditional distributional ambiguity in \eqref{eq: MULTI.robust}, we do not assume any particular structure on $\cPcond{t}$ until Section \ref{sec: MULTI.TV}. 
	Note that when $\cPcond{t}$ contains only the {\it nominal} conditional distribution of $\xi_{t+1}$, conditioned on $\hxi{t}$, \eqref{eq: MULTI.robust} reduces to the classical multistage stochastic program. Hence, by the monotonicity and the translation invariance of the expectation, this formulation can be written in a nonnested form. 

	\begin{remark}
		\label{rem: MULTI.consitency_eoh}
		It is tempting to form  a multistage \dro\ problem of the form 
		\begin{equation}
		\label{eq: MULTI.eoh}
		\begin{array}{rl}
		\min\limits_{x_{1}, x_{2}, \ldots, x_{T}}  \max\limits_{\Bs{P} \in \Cs{P} } \;  &  \ee{\Bs{P}}{g_{1}(x_{1}, \xi_{1})+ g_{2}(x_{2}, \xi_{2})+ \ldots +g_{T}(x_{T}, \xi_{T})}\\
		\suchthat  & x_{t} \in \Cs{X}_{t}, \; t=1, 2, \ldots T,
		\end{array}
		\end{equation}
		where the ambiguity set $\Cs{P}$ is a subset of all probability measures on $\measurespace$. 
		While such a formulation is valid, it is not automatically time consistent. In the sense of the sequential optimality conditions  of a  policy  \citep{shapiro2009time,ruszczynski2010risk,shapiro2012time}, time consistency implies that 
		``decisions made today should agree with the planning made yesterday for the scenario that actually occurred" \citep{homem2016risk}.  
		In fact, \eqref{eq: MULTI.eoh} is time consistent if and only if the  risk measure corresponding to the inner maximization problem in \eqref{eq: MULTI.eoh}  is decomposable into a (nested) sequence of conditional risk mappings and  satisfies a strict monotonocity property \citep{shapiro2012,shapiro2020tutorial,shapiro2016decomposability}. The decomposability property lends itself to a rectangular set associated with set $\Cs{P}$. We refer interested readers to \cite{shapiro2015rec} for topics on the existence and construction of a rectangular set.   
		
	\end{remark}



	It is known the time-consistent formulation \eqref{eq: MULTI.robust} naturally leads to the so-called Bellman's principle of optimality  to derive dynamic programming (DP) equations. 
	These equations 
	play an important role in the analysis of the effectiveness of the scenario paths and the conditional effectiveness of realizations along a scenario path in this paper. We now present these equations for future references.
	
	Consider  \eqref{eq: MULTI.robust} at a given stage $t$, $t=1, \ldots, T$, when all information from the previous  stages, given by $\hx{t-1}$ and $\hxi{t}$, is known. Throughout the paper, we use the vacuous notation $x_{[0]}$ for convenience. 
	Going backward in time, we  can derive DP reformulation of \eqref{eq: MULTI.robust}, where the cost-to-go (value) function at stage $t$, $t=1, \ldots, T$, is as follows
	\begin{equation}
	\label{eq: MULTI.VF}
	Q_{t}(\hx{t-1}, \hxi{t})=\min_{x_{t} \in \Cs{X}_{t}} \ g_{t}(x_{t}, \xi_{t}) + \max_{\condbs{p}{t+1} \in \cPcond{t}} \ee{\condbs{p}{t+1}}{Q_{t+1}(\hx{t}, \hxi{t+1})}. 
	\end{equation}
	We refer to  $\max_{\condbs{p}{t+1} \in \cPcond{t}} \ee{\condbs{p}{t+1} }{Q_{t+1}(\hx{t}, \hxi{t+1})}$ as the {\it worst-case expected problem} at stage $t$. We assume that $Q_{t}(\hx{t-1}, \hxi{t})$ is finite and integrable for any distribution in $\fM{t}$, $t=1, \ldots, T$.
	By convention, we  set  the worst-case expected problem at stage $T+1$ to zero. Moreover, we denote the optimal value simply with $Q_{1}$.
	
	
	\begin{remark}
		\label{rem: time_consistency_DP}
		Throughout this paper we assume that an optimal policy satisfies the DP equations, possibly obtained via a nested Benders' decomposition algorithm \citep{Stochastic}. An optimal policy $x^{*}=[x_1^*, x_2^*(\hxi{2}), \ldots, x_T^*(\hxi{T})]$ that satisfies the DP equations  is time consistent because for every stage $t$, $t=1, \ldots, T$, $x^*_{t}=x_t^*(\hxi{t})$ is an optimal solution to problem \eqref{eq: MULTI.VF} for  every realization of the random vector $\hxi{t}$.  
	\end{remark}

	\subsection{Scenario Tree Notation} 
	Recall that we assume the stochastic process in $T$ stages is represented by $\xi=(\xi_1, \dots, \xi_T)$, where $\xi_{t}: \Omega \mapsto  \Xi_{t} \subset \Bs{R}^{d_t}$.  
	We denote the support of $\hxi{t}$ by $\hXi{t}:=\times_{t^{\prime}=1}^{t} \Xi_{t^{\prime}}$, $t=1, \ldots, T$. We let $\hXi{T}:=\Xi$. 
	Because we assume a finite sample space, a scenario tree with $T$ stages represents all the
	possible ways that the stochastic process $\xi$ evolves. 
	
	We associate each realization of the random vector $\hxi{t}$ to a node in stage $t$ of the scenario tree, $t=1, \ldots, T$. We denote the set of nodes at stage $t$ by $\Omega_{t}$, and $\omega_t \in \Omega_t$ denotes an element in that set. 
	By assumption, $\Omega_1$ is a singleton, containing the root node $\omega_{1}$. 
	We  use the notation $\xi_{t}^{\omega_t}$ ($\xi_{[t]}^{\omega_t}$) to indicate a specific realization of the random vector $\xi_{t}$ ($\xi_{[t]}$) at node $\omega_t$.
	Because of the one-to-one correspondence between $\hXi{t}$ and $\Omega_t$, we may use them interchangeably, both in words and notation. 
	For example, 
	we may use $\omega_{t}$ as  a shorthand notation for $\hxi{t}^{\omega_{t}}$.  
	In particular, there is a one-to-one correspondence between $\omega_T \in \Omega_T$ at the last stage and a  realization $\xi \in \Xi$. 
	We use  $\omega_T \in \Omega_{T}$  to refer to  a generic  node in  stage $T$ of the tree as well as a realization $\xi \in \Xi$ of the stochastic process. By construction, a  realization $\xi \in \Xi$ is determined by a path from the root node $\omega_{1}$ to a leaf node $\omega_{T} \in \Omega_T$. Such a generic path is uniquely identified with $\omega_{T}$, and we  refer to $\omega_{T}$ as a {\it scenario path} throughout the paper.  

	A node $\omega_{t}$ ($t>1$), has a unique immediate ancestor in stage $ t-1 $, denoted by $a(\omega_t) \in \Omega_{t-1}$, and a node $\omega_{t}$ $(t<T)$  has a set of immediate children in stage $t+1$, denoted by  $\Cs{C}(\omega_t) \subset \Omega_{t+1}$.
	By definition, $\Cs{C}(\omega_{t})=\sset*{\omega_{t+1} \in \Omega_{t+1}}{a(\omega_{t+1})=\omega_{t}}$ and $\Omega_{t+1}=\bigcup_{\omega_{t} \in \Omega_{t}} \Cs{C}(\omega_{t})$. For a subset  $\Cs{S}_{t+1} \subseteq \Omega_{t+1}$, $t=1,\ldots, T-1$, we also define $a(\Cs{S}_{t+1}):=\sset*{\omega_{t} \in \Omega_{t}}{\exists \;  \omega_{t+1} \in \Cs{S}_{t+1} \ \suchthat \ a(\omega_{t+1})=\omega_{t}}$ to denote the set of all nodes $\omega_{t} \in \Omega_{t}$ that have at least one child in $\Cs{S}_{t+1}$. For $t=1, \ldots, T$, we  define a projection mapping $\Pi_{t}: \Omega_{T} \mapsto  \Omega_{t}$ which returns the stage-$t$ node that the scenario path $\omega_{T}$ is going through. By definition, we have $\Pi_{1}(\omega_{T})=\omega_{1}$ and  $\Pi_{T}(\omega_{T})=\omega_{T}$ for all $\omega_T \in \Omega_T$. 
	For $t=1, \ldots, T-1$, we  use  $q_{t+1| \omega_{t}}(\omega_{t+1}) $  to denote the  nominal 
	conditional  probability  of $\xi_{t+1}^{\omega_{t+1}}$ given $\xi_{[t]}^{\omega_{t}}$
	, i.e., $q_{t+1| \omega_{t}}(\omega_{t+1}):=\textrm{Prob} \big\{ \xi_{t+1} = \xi_{t+1}^{\omega_{t+1}} \;|\;  \xi_{[t]} = \xi_{[t]}^{\omega_{t}} \big\}$, 
	We may  use $\cPcondnode{t}$ as a shorthand notation for the ambiguity set  $\Cs{P}_{t+1|\hxi{t}^{\omega_{t}}} $.
	We also use  $p_{t+1}(\omega_{t+1})$, $\bs{p}_{t+1} \in \cPcondnode{t}$, in a similar manner as the nominal conditional distribution $q_{t+1| \omega_{t}}(\omega_{t+1}) $.  We  use boldface letter to denote a probability vector. 

	\subsection{Background  on Effective Scenarios for a Two-Stage \dro}
	\label{sec: MULTI.background_def}
	
	Consider a special case of \eqref{eq: MULTI.robust} with $T=2$.  
	To minimize notational overload, let us drop stage indices. Let $\Omega=\{\omega^1, \ldots, \omega^n\}$ denote the set of scenarios, with $\omega$ as a generic element in $\Omega$. Given this setup, a two-stage \dro\ problem can be written as 
	\begin{equation}
	\label{eq: robust}
	\min_{x \in \Cs{X} } \ \left\{f(x):=g(x)  + \max_{\bs{p} \in \Cs{P} }  \ 	\sum_{\omega \in \Omega} p(\omega) Q(x,\omega) 
	\right\}, \tag{2-\dro}
	\end{equation}
	where $Q(x,\omega)$ is defined as in \eqref{eq: MULTI.VF} for a fixed $x \in \Cs{X}$ and $\omega \in \Omega$, and $\Cs{P}$ is an ambiguity set of probability distributions. 
	Let us consider a subset of scenarios $\Cs{S} \subset \Omega$. The idea behind identifying the effectiveness of scenarios in $\Cs{S}$ is to check whether the optimal value of \eqref{eq: robust} changes once these scenarios are ``removed" from the ambiguity set, or more precisely, they are forced to have a zero probability. 
	To that end,  the authors in \citep{rahimian2019} define the so-called {\it assessment problem of scenarios in $\Cs{S}$} as follows 
	\begin{equation}
	\label{eq: removal}
	\min_{x \in \Cs{X} } \ \left\{f^{\text{A}}(x; \Cs{S}):=g(x) + \max_{\bs{p} \in \Cs{P}^{\text{A}}(\Cs{S})} \ \sum_{\omega \in \Omega} p(\omega) Q(x,\omega) 
	\right\},
	\end{equation}
	where $\Cs{P}^{\text{A}}(\Cs{S}):=\Cs{P} \cap \{p(\omega)=0, \ \omega \in \Cs{S}\}$ is the ambiguity set of distributions for the assessment problem of scenarios in $\Cs{S}$. 
	Observe that we have
	\begin{equation}
	\label{eq: cost_rel}
	\min_{x \in \Cs{X} } \ f^{\text{A}}(x; \Cs{S}) \le f^{\text{A}}(x^{*};\Cs{S}) \le \min_{x \in \Cs{X} } \ f(x),
	\end{equation}
	where $x^*$ is an optimal solution to \eqref{eq: robust}. 
	
	\begin{definition}{\citep[Definition~1]{rahimian2019}}
		\label{def: effective}
		A subset $\Cs{S} \subset \Omega$ is called effective if  $\min_{x  \in \Cs{X}}  f^{\text{A}}(x;\Cs{S}) < \min_{x \in \Cs{X}  } f(x)$. A subset $\Cs{S} \subset \Omega$ is called ineffective if it is not effective.
	\end{definition}
	
	\begin{remark}
		\label{rem: sufficient_eff}
		A sufficient condition for a subset $\Cs{S}$ of scenarios to be  effective is $f^{\text{A}}(x^{*};\Cs{S}) < \min_{x \in \Cs{X} } \ f(x)$. If the assessment problem \eqref{eq: removal} is not well defined (i.e., $\Cs{P}^{\text{A}}(\Cs{S})$ is infeasible), the corresponding subset of
		scenarios is effective by definition. \qed
	\end{remark}
	

	\section{Effective Scenarios for a Multistage \dro}
	\label{sec: MULTI.effective_scens}	
	
	In this section, we extend the notion of effective scenarios, introduced in \cite{rahimian2019} for a static \dro, to 
	\eqref{eq: MULTI.robust}. 
	We address the effectiveness of scenarios for 
	\eqref{eq: MULTI.robust}  in two ways:  effectiveness of scenario paths (Section \ref{sec: MULTI.gen_eff_path}) and  conditional effectiveness of realizations  (Section \ref{sec: MULTI.gen_eff_realization}). 

	\subsection{Effective Scenario Paths for a Multistage \dro}
	\label{sec: MULTI.gen_eff_path}
	
	The idea behind identifying the effectiveness of  scenario paths is to verify whether the optimal value of \eqref{eq: MULTI.robust} changes when a scenario path (or, more generally, a set of scenario paths)  is removed from the problem. 
	As in \citep{rahimian2019}, the first task is then  to define what is meant by ``removing" a set of scenario paths $\Cs{S} \subset \Omega_{T}$ from the problem.
	We remove scenario paths in $\Cs{S}$ by forcing the probability of these scenario paths to be zero. 
	By the construction of a scenario tree, the probability of a scenario path $\omega_T \in \Cs{S}$ is calculated by the product of conditional probabilities, associated with the nodes along the path.
	Thus, forcing the probability of a scenario path to zero can be achieved by setting the conditional probability of the corresponding leaf node $\omega_T \in \Omega_T$ to zero.  It is worth noting that setting the conditional probability of any other node along the path, besides the leaf node, to zero forces the probability of {\it all} scenario paths that are going through that node, to zero, which is undesirable. Moreover, because the ambiguity sets in \eqref{eq: MULTI.robust} are defined conditionally, we need to set the conditional probability of the corresponding leaf node $\omega_T$ to zero in an appropriate conditional ambiguity set, more precisely, in  $\Cs{P}_{T|a(\omega_T)}$.

	\subsubsection{Assessment Problem of Scenario Paths}
	\label{sec: MULTI.assessment_path}
	
	To identify the effectiveness of scenario paths in $\Cs{S} \subset \Omega_{T}$ we define an appropriate assessment problem. 
	In order to elaborate on the notion of removal of scenario paths and the assessment problem in the multistage setting, let us define more notation. 
	Recall the definition of $a(\Cs{S})$, the set of all nodes $\omega_{T-1} \in \Omega_{T-1}$ that have at least one child in $\Cs{S} \subset \Omega_{T}$.  
	For each $\omega_{T-1} \in a(\Cs{S})$, let us define $\Cs{S}(\omega_{T-1}):=\Cs{C}(\omega_{T-1}) \cap \Cs{S}$ for the set of its children in $\Cs{S}$. Thus, 
	$\Cs{S}=\bigcup_{\omega_{T-1} \in a(\Cs{S})} \Cs{S}(\omega_{T-1})$. 
	
	For any $\omega_{T-1} \in a(\Cs{S})$, we remove the scenario paths in $\Cs{S}(\omega_{T-1})$ by restricting the ambiguity set $\Cs{P}_{T|\omega_{T-1}}$ to those probabilities $\bs{p}_{T}$ for which $p_{T}(\omega_{T})=0, \ \omega_{T} \in \Cs{S}(\omega_{T-1})$; see Figure \ref{fig: path_removal} for an illustration. 
	This ensures that the scenario paths in $\Cs{S}(\omega_{T-1})$ are not in the support of  any worst-case probability distribution $\bs{p}$
	induced by $\xi$.
	We shall call the  problem that removes all scenario paths in $\Cs{S}$ the  {\it assessment problem of scenario paths in $\Cs{S}$}. 
	More formally, this problem can be formulated as
	\begin{equation}
	\label{eq: MULTI.robust_assessment}
	\min_{x_{1} \in \Cs{X}_{1}} \ g_{1}(x_{1}, \xi_{1})  + \max_{\bs{p}_{2} \in \Cs{P}_{2|\hxi{1}}} \ \mathbb{E}_{\bs{p}_{2}}   \left[   \ldots +  \max_{\bs{p}_{T} \in  \cPcondA{T}(\Cs{S})}  \ \mathbb{E}_{\bs{p}_{T}} \left[   \min_{x_{T} \in \Cs{X}_{T}} g_{T}(x_{T}, \xi_{T}) \right]  \ldots  \right],
	\end{equation}
	where for $\omega_{T-1} \in a(\Cs{S})$, the last-stage conditional ambiguity set of distributions for the assessment of the scenario paths in $\Cs{S}(\omega_{T-1})$ is defined as  $\cPcondAnode{T}(\Cs{S}):=\Cs{P}_{T|\omega_{T-1}} \cap \{p_{T}(\omega_{T})=0, \ \omega_{T} \in \Cs{S}(\omega_{T-1})\}$.  For $\omega_{T-1} \notin a(\Cs{S})$, we set  $\cPcondAnode{T}(\Cs{S})=\Cs{P}_{T|\omega_{T-1}}$. Notice that \eqref{eq: MULTI.robust_assessment} is similar to problem \eqref{eq: MULTI.robust}, except for  the last stage where $\cPcond{T}$ is replaced with $\cPcondA{T}(\Cs{S})$, with the interpretation mentioned above. 
	Moreover, notice that the assessment problem \eqref{eq: MULTI.robust_assessment} does not eliminate scenario paths from the sample space; it only enforces $p_{T}(\omega_{T})=0$, $\omega_{T} \in \Cs{S}(\omega_{T-1})$, for all $\omega_{T-1} \in a(\Cs{S})$.
	Also, when $\Cs{S}=\emptyset$, \eqref{eq: MULTI.robust_assessment} reduces to \eqref{eq: MULTI.robust}.

	\begin{figure}[!htbp]
		\centering
		\includegraphics[width=0.55\linewidth]{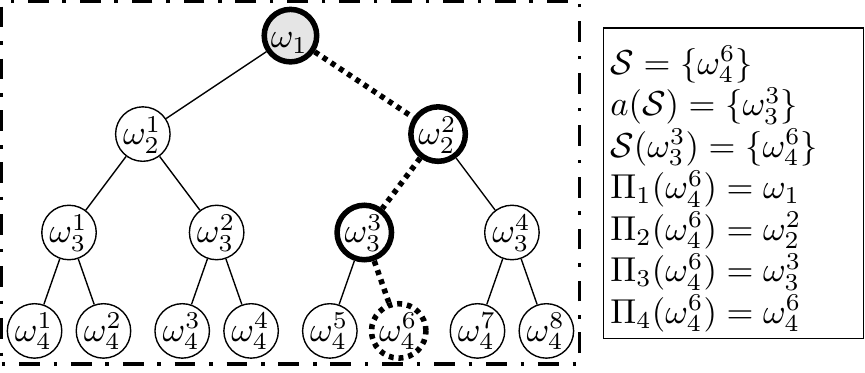}
		\caption{\label{fig: path_removal} Removal of the scenario  path $\omega_4^6 \in  \Omega_4$. 
			The path from the root node $\omega_1$ to the leaf node $\omega_4^6$ is shown with thick dotted lines, indicating the removal of this scenario path. To remove the scenario path $\omega_4^6$, we must add a constraint $p_{4}(\omega_4^6)=0$ to the ambiguity set $\Cs{P}_{4|\omega_3^3}$. For nodes not shown in thick black  circles, including $\omega_4^6$,  we use the same notation for the cost-to-go function $Q_{t}(\cdot, \cdot)$ at stages $t=2,3,4$, both in \eqref{eq: MULTI.robust} and the assessment problem  \eqref{eq: MULTI.robust_assessment}. 
			To check the effectiveness of the scenario path $\omega_4^6$, according to Definition \ref{def: MULTI.path_eff}, we compare the optimal values of \eqref{eq: MULTI.robust} and \eqref{eq: MULTI.robust_assessment} at the root node $\omega_1$, shown in gray.}
	\end{figure}

	\subsubsection{DP Reformulation of the Assessment Problem}
	\label{sec: MULTI.DP_path_assessment}
	In order to mathematically define the notion of effectiveness of scenario paths  for \eqref{eq: MULTI.robust}, we need to establish the relationship between the optimal values of the original problem \eqref{eq: MULTI.robust} and the assessment problem \eqref{eq: MULTI.robust_assessment}  precisely. To do this, we first write the DP reformulation of \eqref{eq: MULTI.robust_assessment} in a similar manner to those of \eqref{eq: MULTI.robust}. 
	
	
	Consider the assessment problem \eqref{eq: MULTI.robust_assessment} at a given stage $t$, $t=1, \ldots, T$, when all information from previous stages (given by $\hx{t-1}$ and $\hxi{t}$) is known. Let $Q_{t}^{\text{A}}\!(\hx{t-1},\!\hxi{t};\!\Cs{S})$ denote the optimal value to this problem.  
	Because there is no inner maximization problem at stage $T$ (i.e., $Q_{T+1}(\cdot, \cdot)\equiv 0$; see Section \ref{sec: MULTI.DP}),  we set  
	\begin{equation}
	\label{eq: MULTI.VF_path_DP_T}
	Q_{T}^{\text{A}}(\hx{T-1}, \hxi{T}; \Cs{S}):=Q_{T}(\hx{T-1}, \hxi{T}). 
	\end{equation}
	For $t=T-1$, on the other hand,  we have
	\begin{equation}
	\label{eq: MULTI.VF_path_DP_T-1}
	\begin{split}
	Q_{T-1}^{\text{A}}(\hx{T-2}, \hxi{T-1}; \Cs{S})=&\min_{x_{T-1} \in \Cs{X}_{T-1}} \!  g_{T-1}(x_{T-1}, \xi_{T-1}) + \\
	& \max_{\bs{p}_{T} \in \cPcondA{T}(\Cs{S}) } \! \ee{\bs{p}_{T}}{Q_{T}^{\text{A}}(\hx{T-1}, \hxi{T}; \Cs{S})}.
	\end{split}
	\end{equation}
	Going backward in time, one can obtain $Q_{t}^{\text{A}}(\hx{t-1}, \hxi{t}; \Cs{S})$  for $t=T-2, \ldots, 1$, as 
	\begin{equation}
	\label{eq: MULTI.VF_path_DP_t}
	Q_{t}^{\text{A}}(\hx{t-1}, \hxi{t}; \Cs{S})=\min_{x_{t} \in \Cs{X}_{t}} \ g_{t}(x_{t}, \xi_{t}) + \max_{\bs{p}_{t+1} \in  \Cs{P}_{t+1|\hxi{t}} } \ee{\bs{p}_{t+1}}{Q_{t+1}^{\text{A}}(\hx{t}, \hxi{t+1}; \Cs{S})}.
	\end{equation}
	Equations \eqref{eq: MULTI.VF_path_DP_T}--\eqref{eq: MULTI.VF_path_DP_t} present the DP reformulation of \eqref{eq: MULTI.robust_assessment}.
	Note that the maximization in \eqref{eq: MULTI.VF_path_DP_T-1} is done over $\cPcondA{T}(\Cs{S})$, whereas it is done over $\Cs{P}_{t+1|\hxi{t}}$  in \eqref{eq: MULTI.VF_path_DP_t}. 
	For brevity, we  denote the optimal value of  \eqref{eq: MULTI.robust_assessment} as $Q_{1}^{\text{A}}(\Cs{S})$.
	Throughout the paper, we use notation $Q_{t}^{\text{A}}(\cdot, \cdot; \Cs{S})$ to differentiate the cost-to-go functions for the assessment problem of the scenario paths in $\Cs{S}$, presented in this section,  from the cost-to-go functions $Q_{t}(\cdot, \cdot)$ of the original problem \eqref{eq: MULTI.robust}, presented in Section \ref{sec: MULTI.DP}.

	\begin{remark}
		\label{rem: MULTI.DP_path}
		In stages $t=2, \ldots, T-1$, we have $Q_{t}^{\text{A}}(\hx{t-1},\hxi{t}^{\omega_{t}};\Cs{S})\!=\!Q_{t}(\hx{t-1}, \hxi{t}^{\omega_{t}})$ if 
		no scenario path $\omega_T \in \Cs{S}$ is going through $\omega_t$. 
		In other words, 
		\begin{equation}
		\label{eq: MULTI.Proj_Observation}
		Q_{t}^{\text{A}}(\hx{t-1}, \hxi{t}^{\omega_{t}};\Cs{S})=Q_{t}(\hx{t-1}, \hxi{t}^{\omega_{t}}) \quad \text{if} \quad \omega_{t} \notin \bigcup_{\omega_{T} \in \Cs{S}} \Pi_{t}(\omega_{T}).
		\end{equation}
		This property, illustrated in Figure \ref{fig: path_removal}, will be frequently used in the proofs in Section \ref{sec: MULTI.thms}.
	\end{remark}

	\subsubsection{Definition and Properties}
	
	In this section, we  precisely define  the effectiveness of scenario paths for \eqref{eq: MULTI.robust} and state some relevant properties. 
	

	\begin{proposition}
		\label{prop: compare_org_asses_path}
		Consider two sets $\Cs{S}^{1} \subseteq \Cs{S}^{2} \subset \Omega_{T}$. Then, $Q_{1}^{\text{A}}(\Cs{S}^2) \le  Q_{1}^{\text{A}}(\Cs{S}^1) $.  
	\end{proposition}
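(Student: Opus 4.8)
The plan is to reduce the claim to one structural fact --- that restricting the last-stage conditional ambiguity sets can only decrease the worst-case expected value --- and then to push this inequality backward through the DP recursion \eqref{eq: MULTI.VF_path_DP_T}--\eqref{eq: MULTI.VF_path_DP_t}. First I would record the monotonicity of the assessment ambiguity sets: for every $\omega_{T-1} \in \Omega_{T-1}$ one has $\cPcondAnode{T}(\Cs{S}^2) \subseteq \cPcondAnode{T}(\Cs{S}^1)$. Indeed, $\Cs{S}^1 \subseteq \Cs{S}^2$ gives $\Cs{S}^1(\omega_{T-1}) = \Cs{C}(\omega_{T-1}) \cap \Cs{S}^1 \subseteq \Cs{C}(\omega_{T-1}) \cap \Cs{S}^2 = \Cs{S}^2(\omega_{T-1})$, so when $\omega_{T-1} \in a(\Cs{S}^1)$ the constraints ``$p_{T}(\omega_{T}) = 0$ for $\omega_{T} \in \Cs{S}^2(\omega_{T-1})$'' subsume those imposed for $\Cs{S}^1$; the remaining cases ($\omega_{T-1} \in a(\Cs{S}^2) \setminus a(\Cs{S}^1)$, or $\omega_{T-1} \notin a(\Cs{S}^2)$) are immediate from the convention $\cPcondAnode{T}(\Cs{S}) = \Cs{P}_{T|\omega_{T-1}}$ whenever $\omega_{T-1} \notin a(\Cs{S})$.

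Next I would prove, by backward induction on $t = T, T-1, \ldots, 1$, that $Q_{t}^{\text{A}}(\hx{t-1}, \hxi{t}; \Cs{S}^2) \le Q_{t}^{\text{A}}(\hx{t-1}, \hxi{t}; \Cs{S}^1)$ for all arguments $(\hx{t-1}, \hxi{t})$. The base case $t = T$ is an equality, since by \eqref{eq: MULTI.VF_path_DP_T} the last-stage value $Q_{T}^{\text{A}}(\cdot, \cdot; \Cs{S}) = Q_{T}(\cdot, \cdot)$ does not depend on $\Cs{S}$. For $t = T - 1$, I would compare the two instances of \eqref{eq: MULTI.VF_path_DP_T-1}: the integrands coincide (both equal $Q_{T}(\hx{T-1}, \hxi{T})$ by the base case), while the inner maximization for $\Cs{S}^2$ is taken over the smaller set $\cPcondA{T}(\Cs{S}^2) \subseteq \cPcondA{T}(\Cs{S}^1)$, so the worst-case expected value is no larger; applying $\min_{x_{T-1} \in \Cs{X}_{T-1}}$ to both sides preserves the inequality. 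For $t \le T - 2$, the two copies of \eqref{eq: MULTI.VF_path_DP_t} use the \emph{same} ambiguity set $\Cs{P}_{t+1|\hxi{t}}$ and the \emph{same} feasible set $\Cs{X}_{t}$; the induction hypothesis gives $Q_{t+1}^{\text{A}}(\hx{t}, \cdot; \Cs{S}^2) \le Q_{t+1}^{\text{A}}(\hx{t}, \cdot; \Cs{S}^1)$ pointwise, and monotonicity of the expectation in its integrand, of the supremum over the common ambiguity set, and of the minimum over the common feasible set, applied in turn, yields $Q_{t}^{\text{A}}(\cdot, \cdot; \Cs{S}^2) \le Q_{t}^{\text{A}}(\cdot, \cdot; \Cs{S}^1)$. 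Specializing to $t = 1$ gives precisely $Q_{1}^{\text{A}}(\Cs{S}^2) \le Q_{1}^{\text{A}}(\Cs{S}^1)$.

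I do not anticipate a genuine obstacle; the only delicate points are bookkeeping. One is the case analysis at stage $T - 1$, where the two assessment ambiguity sets actually differ and one must invoke the inclusion from the first step. The other is the degenerate situation in which some $\cPcondAnode{T}(\Cs{S}^1)$ is infeasible: then, by that same inclusion, $\cPcondAnode{T}(\Cs{S}^2)$ is infeasible as well, both inner maxima equal $-\infty$ under the usual convention that a supremum over the empty set is $-\infty$ (consistent with the treatment of infeasible assessment problems in Remark \ref{rem: sufficient_eff}), and the asserted inequality holds trivially; the case in which only $\cPcondAnode{T}(\Cs{S}^2)$ is infeasible is handled the same way.
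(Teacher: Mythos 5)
Your argument is correct, but it follows a different route from the paper's. You prove the pointwise comparison $Q_{t}^{\text{A}}(\hx{t-1}, \hxi{t}; \Cs{S}^2) \le Q_{t}^{\text{A}}(\hx{t-1}, \hxi{t}; \Cs{S}^1)$ for \emph{all} arguments by backward induction on the DP recursion, where the only nontrivial ingredient is the inclusion $\cPcondAnode{T}(\Cs{S}^2) \subseteq \cPcondAnode{T}(\Cs{S}^1)$ at stage $T-1$ and, at earlier stages, the monotonicity of expectation, of the max over a common ambiguity set, and of the min over a common feasible set. The paper instead fixes an optimal policy $x^{*}(\Cs{S}^{1})$ of the $\Cs{S}^{1}$-assessment problem and propagates inequalities along that specific policy backward in time, invoking its suboptimality for the $\Cs{S}^{2}$-problem and its time consistency (that it solves the DP equations at every node). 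Your version is more elementary and slightly more general: it needs no existence of an optimal policy satisfying the DP equations and no appeal to time consistency, only set inclusion plus monotonicity. What the paper's policy-based argument buys is that it directly produces the intermediate quantity evaluated at the optimal policy, which is exactly what is reused right after the proposition to obtain the sandwich \eqref{eq: MULTI.cost_rel}; with your value-function induction that middle term is recovered too, but it requires the extra (easy) step of plugging $x^{*}_{[1]}$ into the stage-$2$ pointwise inequality and then using suboptimality of $x^{*}_{1}$. One small caveat on your degenerate case: you invoke the convention that a supremum over an empty ambiguity set is $-\infty$, citing Remark \ref{rem: sufficient_eff}, whereas the paper explicitly assigns the value $+\infty$ to an infeasible assessment problem and declares such $\Cs{S}$ effective by fiat; under the paper's convention the stated inequality should be read as applying to well-defined assessment problems (as the paper's own proof implicitly does), so you should either adopt that reading or flag your $-\infty$ convention as a deviation rather than attribute it to the paper.
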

	
	\begin{proof}
		Let $x^{*}(\Cs{S}^{1}):=[x_1^*(\Cs{S}^{1}), \ldots, x_T^*(\Cs{S}^{1})]$ denote an optimal policy obtained by solving the DP reformulation of \eqref{eq: MULTI.robust_assessment} for $\Cs{S}^{1}$. 
		For any $\hx{T-2}$ and $\hxi{T-1}$, the corresponding worst-case expected value problem at stage $T-1$
		in \eqref{eq: MULTI.robust_assessment} for $\Cs{S}^{2}$ is more restricted than the corresponding problem in \eqref{eq: MULTI.robust_assessment} for $\Cs{S}^{1}$. 
		This, combined with the  suboptimality of $x^{*}(\Cs{S}^{1})$ to problem \eqref{eq: MULTI.robust_assessment} for $\Cs{S}^{2}$, implies that 
		\begin{equation*}
		\begin{array}{ll}
		& Q_{T-1}^{\text{A}}\Big(\hstarx{T-2}(\Cs{S}^{1}), \hxi{T-1}; \Cs{S}^{2}\Big) \\
		& \le g_{T-1}\Big(x^{*}_{T-1}(\Cs{S}^{1}), \xi_{T-1}\Big) + \max_{\bs{p}_{T} \in \cPcondA{T}(\Cs{S}^{2}) } \ee{\bs{p}_{T}}{Q_{T}\Big(\hstarx{T-1}(\Cs{S}^{1}), \hxi{T}\Big)} \\
		& \le g_{T-1}\Big(x^{*}_{T-1}(\Cs{S}^{1}), \xi_{T-1}\Big) + \max_{\bs{p}_{T} \in \cPcondA{T}(\Cs{S}^{1}) } \ee{\bs{p}_{T}}{Q_{T}\Big(\hstarx{T-1}(\Cs{S}^{1}), \hxi{T}\Big)} \\
		& = Q_{T-1}^{\text{A}}\Big(\hstarx{T-2}(\Cs{S}^{1}), \hxi{T-1}; \Cs{S}^{1}\Big),
		\end{array}
		\end{equation*}
		where we also used the fact that $Q_{T}^{\textrm{A}}\Big(\hstarx{T-1}(\Cs{S}^{1}), \hxi{T};\Cs{S}^{2}\Big)=Q_{T}\Big(\hstarx{T-1}(\Cs{S}^{1}), \hxi{T}\Big)$. The equality above is due to the time consistency of $x^{*}(\Cs{S}^{1})$  (recall Remark \ref{rem: MULTI.consitency_eoh}). 
		Going backward in time for $t=T-2, \ldots, 2$, we have 
		\begin{equation*}
		\begin{array}{ll}
		&  Q_{t}^{\text{A}}\Big(\hstarx{t-1}(\Cs{S}^{1}), \hxi{t}; \Cs{S}^{2}\Big)  \\ 
		& \le g_{t}\Big(x^{*}_{t}(\Cs{S}^{1}), \xi_{t}\Big) + \max_{\bs{p}_{t+1} \in \cPcond{t} } \ee{\bs{p}_{t+1}}{Q_{t+1}^{\text{A}}\Big(\hstarx{t}(\Cs{S}^{1}), \hxi{t+1}; \Cs{S}^{2}\Big)} \\
		& \le g_{t}\Big(x^{*}_{t}(\Cs{S}^{1}), \xi_{t}\Big) + \max_{\bs{p}_{t+1} \in \cPcond{t} } \ee{\bs{p}_{t+1}}{Q_{t+1}^{\text{A}}\Big(\hstarx{t}(\Cs{S}^{1}), \hxi{t+1}; \Cs{S}^{1}\Big)} \\
		& = Q_{t}^{\text{A}}\Big(\hstarx{t-1}(\Cs{S}^{1}), \hxi{t}; \Cs{S}^{1}\Big),
		\end{array}
		\end{equation*}
		where the second inequality is due to $Q_{t+1}^{\text{A}}\Big(\!\hstarx{t}(\!\Cs{S}^{1}\!),\hxi{t+1};\!\Cs{S}^{2}\!\Big)\!\le\! Q_{t+1}^{\text{A}}\Big(\!\hstarx{t}(\!\Cs{S}^{1}\!), \hxi{t+1};\!\Cs{S}^{1}\!\Big)$ and the equality is due to the time consistency of $x^{*}(\Cs{S}^{1})$. 
		Consequently, we have
		\begin{equation*}
		\begin{array}{ll}
		Q_{1}^{\text{A}}(\Cs{S}^{2}) & = \min_{x_{1} \in \Cs{X}_{1}} \ g_{1}(x_{1}, \xi_{1}) + \max_{\bs{p}_{2} \in \Cs{P}_{2|\hxi{1}}} \ee{\bs{p}_{2}}{Q_{2}^{\text{A}}(\hx{1}, \hxi{2}; \Cs{S}^{2})} \\
		& \le 
		g_{1}\Big(x^{*}_{1}(\Cs{S}^{1}), \xi_{1}\Big) + \max_{\bs{p}_{2} \in \Cs{P}_{2|\hxi{1}}} \ee{\bs{p}_{2}}{Q_{2}^{\text{A}}\Big(x^{*}_{[1]}(\Cs{S}^1), \hxi{2}; \Cs{S}^{2}\Big)}\\
		& \le
		g_{1}\Big(x^{*}_{1}(\Cs{S}^{1}), \xi_{1}\Big) + \max_{\bs{p}_{2} \in \Cs{P}_{2|\hxi{1}}} \ee{\bs{p}_{2}}{Q_{2}^{\text{A}}\Big(x^{*}_{[1]}(\Cs{S}^1), \hxi{2}; \Cs{S}^{1}\Big)}\\
		& = Q_{1}^{\text{A}}(\Cs{S}^{1}),
		\end{array}
		\end{equation*}
		where the first inequality is due to the  suboptimality of $x^{*}(\Cs{S}^{1})$ to problem \eqref{eq: MULTI.robust_assessment}. 
	\end{proof}
	
	By taking $\Cs{S}^{2}=\Cs{S}$ and $\Cs{S}^{1}=\emptyset$ in the proof of  Proposition \ref{prop: compare_org_asses_path}, we have 
	\begin{equation}
	\label{eq: MULTI.cost_rel}
	Q_{1}^{\text{A}}(\Cs{S}) \le  g_{1}(x^{*}_{1}, \xi_{1}) + \max_{\bs{p}_{2} \in \Cs{P}_{2|\hxi{1}}} \ee{\bs{p}_{2}}{Q_{2}^{\text{A}}(x^{*}_{[1]}, \hxi{2}; \Cs{S})} \le Q_1,   
	\end{equation}
	where $Q_1$ is the optimal value of \eqref{eq: MULTI.robust} and $x^*$ is an optimal policy to \eqref{eq: MULTI.robust}. With this relationship, we can now define effective scenario paths. 

	\begin{definition}[Effectiveness of scenario paths]
		\label{def: MULTI.path_eff}
		A subset of scenario paths $\Cs{S} \subset \Omega_{T}$   is called {\it effective}  if $Q_{1}^{\text{A}}(\Cs{S}) < Q_{1}$.
		A subset of scenario paths is called {\it ineffective} if it is not effective.
	\end{definition}
	
	In words, a subset of scenario paths $\Cs{S} \subset \Omega_{T}$   is called effective if the optimal value of the corresponding assessment problem \eqref{eq: MULTI.robust_assessment} is strictly smaller than the optimal value of \eqref{eq: MULTI.robust}.
	Similar to Remark \ref{rem: sufficient_eff}  for \eqref{eq: robust}, 
	a sufficient condition for a subset of scenario paths $\Cs{S} \subset \Omega_T$ to be  effective is $g_{1}(x^{*}_{1}, \xi_{1}) + \max_{\bs{p}_{2} \in \Cs{P}_{2|\hxi{1}}} \ee{\bs{p}_{2}}{Q_{2}^{\text{A}}(x^{*}_{[1]}, \hxi{2}; \Cs{S})} < Q_1$.
	Moreover, 
	the assessment problem of scenario paths in $\Cs{S}$ might not be well defined. For instance, if for some $\omega_{T-1} \in a(\Cs{S})$,  too many scenario paths are restricted to have a zero worst-case probability (e.g.,  $\Cs{S}(\omega_{T-1})= \Cs{C}(\omega_{T-1})$), then, the inner maximization problem at stage $T-1$ might become infeasible. In this case, we set the optimal value of  \eqref{eq: MULTI.robust_assessment} to $+\infty$ by convention and  $\Cs{S}$ is effective by definition.  
	
	One can conjecture that the effectiveness of a subset of scenario paths might be affected in interaction with other subsets of scenario paths. The following proposition addresses the effectiveness of  union of an effective subset of scenario paths and  intersection of an ineffective subset of scenario paths with any other subset of $\Omega_{T}$.

	\begin{proposition}
		\label{prop: uni_int}
		(i) The union of an effective subset of scenario paths with any other subset of $\Omega_{T}$ is effective.
		(ii) The intersection of an ineffective subset of scenario paths with any other subset of $\Omega_{T}$ is ineffective.
	\end{proposition}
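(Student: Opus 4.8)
The plan is to obtain both statements from the monotonicity of the assessment value proved in Proposition~\ref{prop: compare_org_asses_path}, the sandwich bound \eqref{eq: MULTI.cost_rel}, and the $+\infty$-convention for ill-defined assessment problems recorded after Definition~\ref{def: MULTI.path_eff}.

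For (i), let $\Cs{S} \subset \Omega_{T}$ be effective and $\Cs{T} \subseteq \Omega_{T}$ arbitrary; I must show $\Cs{S} \cup \Cs{T}$ is effective. If the assessment problem of $\Cs{S} \cup \Cs{T}$ is not well defined, then $\Cs{S} \cup \Cs{T}$ is effective by convention and we are done, so suppose it is well defined. Observe that enlarging the removed set only shrinks the last-stage conditional ambiguity sets, $\cPcondAnode{T}(\Cs{S} \cup \Cs{T}) \subseteq \cPcondAnode{T}(\Cs{S})$ for every $\omega_{T-1}$, so the assessment problem of $\Cs{S}$ is then a fortiori well defined. Since $\Cs{S} \subseteq \Cs{S} \cup \Cs{T}$, Proposition~\ref{prop: compare_org_asses_path} yields $Q_{1}^{\text{A}}(\Cs{S} \cup \Cs{T}) \le Q_{1}^{\text{A}}(\Cs{S})$, and effectiveness of $\Cs{S}$ gives $Q_{1}^{\text{A}}(\Cs{S}) < Q_{1}$; chaining the two, $Q_{1}^{\text{A}}(\Cs{S} \cup \Cs{T}) < Q_{1}$, i.e., $\Cs{S} \cup \Cs{T}$ is effective.

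For (ii), the cleanest route is contraposition via (i). Let $\Cs{S} \subset \Omega_{T}$ be ineffective and $\Cs{T} \subseteq \Omega_{T}$ arbitrary, and suppose for contradiction that $\Cs{S} \cap \Cs{T}$ is effective. The decomposition $\Cs{S} = (\Cs{S} \cap \Cs{T}) \cup (\Cs{S} \setminus \Cs{T})$ exhibits $\Cs{S}$ as the union of the effective set $\Cs{S} \cap \Cs{T}$ with another subset of $\Omega_{T}$, so by (i) $\Cs{S}$ is effective, contradicting its ineffectiveness; hence $\Cs{S} \cap \Cs{T}$ is ineffective. A direct argument also works: ineffectiveness of $\Cs{S}$ together with \eqref{eq: MULTI.cost_rel} forces $Q_{1}^{\text{A}}(\Cs{S}) = Q_{1}$, so in particular that assessment problem---and hence that of $\Cs{S} \cap \Cs{T} \subseteq \Cs{S}$---is well defined; then Proposition~\ref{prop: compare_org_asses_path} gives $Q_{1}^{\text{A}}(\Cs{S} \cap \Cs{T}) \ge Q_{1}^{\text{A}}(\Cs{S}) = Q_{1}$, while \eqref{eq: MULTI.cost_rel} gives the reverse inequality, so $Q_{1}^{\text{A}}(\Cs{S} \cap \Cs{T}) = Q_{1}$.

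The whole argument is essentially immediate from monotonicity plus the sandwich \eqref{eq: MULTI.cost_rel}; the only delicate point is the bookkeeping around well-definedness of the assessment problems, ensuring Proposition~\ref{prop: compare_org_asses_path} is invoked only when both problems in question are well defined and that the $+\infty$-convention absorbs the remaining cases. I expect that to be the main---though quite minor---obstacle.
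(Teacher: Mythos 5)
Your proof is correct and follows essentially the same route as the paper, which omits the details but explicitly bases the argument on the monotonicity of Proposition~\ref{prop: compare_org_asses_path} together with Definition~\ref{def: MULTI.path_eff} (your direct argument for (ii) is exactly the intended one). The extra bookkeeping you do around the $+\infty$ convention for ill-defined assessment problems is a sensible, harmless refinement rather than a deviation.
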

	
	\begin{proof}
		The proof is similar to \citep[Proposition~2]{rahimian2019}, but it uses Proposition \ref{prop: compare_org_asses_path} and Definition \ref{def: MULTI.path_eff}. For brevity, we skip the details. 
	\end{proof}
	
	\begin{corollary}
		\label{cor: subset}
		A subset of an ineffective subset of scenario paths is ineffective.
	\end{corollary}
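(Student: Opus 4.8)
The plan is to deduce this immediately from Proposition \ref{prop: uni_int}(ii), or equivalently from the monotonicity established in Proposition \ref{prop: compare_org_asses_path}. I would first fix an ineffective subset $\Cs{S}^{1} \subset \Omega_{T}$ and an arbitrary subset $\Cs{S}^{2} \subseteq \Cs{S}^{1}$, and observe that $\Cs{S}^{2} = \Cs{S}^{1} \cap \Cs{S}^{2}$; then Proposition \ref{prop: uni_int}(ii) applied with $\Cs{S}^{1}$ ineffective and $\Cs{S}^{2}$ as the ``other subset'' yields directly that $\Cs{S}^{2}$ is ineffective.

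If one prefers a self-contained argument, I would instead argue as follows. Since $\Cs{S}^{1}$ is ineffective, Definition \ref{def: MULTI.path_eff} together with the bound $Q_{1}^{\text{A}}(\Cs{S}^{1}) \le Q_{1}$ from \eqref{eq: MULTI.cost_rel} gives $Q_{1}^{\text{A}}(\Cs{S}^{1}) = Q_{1}$. Because $\Cs{S}^{2} \subseteq \Cs{S}^{1}$, Proposition \ref{prop: compare_org_asses_path} gives $Q_{1}^{\text{A}}(\Cs{S}^{1}) \le Q_{1}^{\text{A}}(\Cs{S}^{2})$, and applying \eqref{eq: MULTI.cost_rel} again gives $Q_{1}^{\text{A}}(\Cs{S}^{2}) \le Q_{1}$. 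Chaining these, $Q_{1} = Q_{1}^{\text{A}}(\Cs{S}^{1}) \le Q_{1}^{\text{A}}(\Cs{S}^{2}) \le Q_{1}$, so $Q_{1}^{\text{A}}(\Cs{S}^{2}) = Q_{1}$, which means $\Cs{S}^{2}$ is not effective, i.e., ineffective.

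There is essentially no obstacle here: the statement is a direct corollary, and the only thing to be careful about is that ``ineffective'' means $Q_{1}^{\text{A}}(\Cs{S}) = Q_{1}$ rather than merely $\ge Q_{1}$, which requires invoking the already-established inequality $Q_{1}^{\text{A}}(\Cs{S}) \le Q_{1}$ to pin down the value. I would keep the write-up to a single sentence citing Proposition \ref{prop: uni_int}(ii), since the longer chain simply re-derives that part of the proposition.
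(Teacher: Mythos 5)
Your proposal is correct and matches the paper's intent exactly: the corollary is stated as an immediate consequence of Proposition \ref{prop: uni_int}(ii), obtained by writing a subset $\Cs{S}^{2}\subseteq\Cs{S}^{1}$ as $\Cs{S}^{1}\cap\Cs{S}^{2}$ with $\Cs{S}^{1}$ ineffective. Your alternative self-contained chain $Q_{1}=Q_{1}^{\text{A}}(\Cs{S}^{1})\le Q_{1}^{\text{A}}(\Cs{S}^{2})\le Q_{1}$ is just the underlying argument (via Proposition \ref{prop: compare_org_asses_path} and \eqref{eq: MULTI.cost_rel}) that the paper itself uses for the proposition, so there is nothing substantively different here.
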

	

	It is worth noting that Definition \ref{def: MULTI.path_eff} for \eqref{eq: MULTI.robust} is not an immediate extension of Definition \ref{def: effective} for \eqref{eq: robust}, and there are subtle but important differences.  To explain this, let us focus on \eqref{eq: MULTI.cost_rel} and \eqref{eq: cost_rel}, the relationships that form the bases for establishing these definitions. In particular, let us examine the  terms $g_{1}(x^{*}_{1}, \xi_{1}) + \max_{\bs{p}_{2} \in \Cs{P}_{2|\hxi{1}}}  \ee{\bs{p}_{2}}{Q_{2}^{\text{A}}(x^{*}_{[1]}, \hxi{2}; \Cs{S})}$ in \eqref{eq: MULTI.cost_rel} and $f^{\text{A}}(x^*\!,\!\Cs{S})= g(x^*) + \max_{\bs{p} \in \Cs{P}^{\text{A}}(\Cs{S})}  \sum_{\omega \in \Omega} p(\omega) Q(x^*,\omega)$ in \eqref{eq: cost_rel} in more detail. On the one hand, the ambiguity set  $\Cs{P}_{2|\hxi{1}}$ is not impacted by the removed subset of scenario paths $\Cs{S}$, whereas $\Cs{P}^{\text{A}}(\Cs{S})$ is a restricted set and impacted by $\Cs{S}$. On the other hand, the cost-to-go function   $Q_{2}^{\text{A}}(x^{*}_{[1]}, \hxi{2}; \Cs{S})$ is impacted by the removed subset of scenario paths $\Cs{S}$, whereas $Q(x^*,\omega)$ is not. So, \eqref{eq: MULTI.cost_rel} does not follow from \eqref{eq: cost_rel}. 
	These differences are themselves due to the differences of the assessment problems \eqref{eq: removal} and \eqref{eq: MULTI.robust_assessment}. 
	By a similar reasoning, there are subtle, important differences in Proposition \ref{prop: uni_int} and \citep[Proposition~2]{rahimian2019}.

	\subsection{Conditionally Effective Realizations for a Multistage \dro}
	\label{sec: MULTI.gen_eff_realization}
	
	In this section, we define conditional effectiveness of realizations along a scenario path, given the available information on the history of the stochastic process and decisions. We explain that this notion, unlike the effectiveness of scenario paths, resembles the effectiveness of scenarios for \eqref{eq: robust}. 
	
	To identify {\it conditionally effective realizations}, as it might be perceived from the name, first, one has to consider all information from previous stages, and then, evaluate what would happen to the optimal cost going forward if a subset of realizations is conditionally removed. 
	Suppose that $\hx{t-1}$ and $\hxi{t}$, $t=1, \ldots, T-1$, are given.
	The idea behind identifying conditionally effective realizations is to verify whether the cost-to-go function  \eqref{eq: MULTI.VF} at stage $t$ changes when a realization   (or, more generally, a set of realizations) in stage $t+1$  is conditionally removed from the problem. 
	We ``conditionally remove" realizations  $\Cs{S}_{t+1} \subset \Omega_{t+1}$ by forcing the conditional probability of these realizations to be zero, conditioned on the history of the stochastic process. 
	
	Note that because the ambiguity sets in \eqref{eq: MULTI.robust} are defined conditionally, we need to set the conditional probability of $\omega_{t+1} \in \Cs{S}_{t+1}$ to zero in an appropriate conditional ambiguity set, more precisely, in $\Cs{P}_{t+1|a(\omega_{t+1})}$.  Moreover, because $\omega_{t+1}$ is forced to have a zero conditional probability, all scenario paths that are going through $\omega_{t+1}$ will have a zero conditional probability, conditioned on $a(\omega_{t+1})$. Hence, all such scenario paths will have a zero probability. This implies that the way the removal of scenario paths is defined in Section \ref{sec: MULTI.gen_eff_path} is different from how we define the conditional removal of realizations in this section. 
	Furthermore, once scenario paths are removed in Section \ref{sec: MULTI.gen_eff_path}, we compare the optimal objective function values at the root node $\omega_1$. Here, we look for changes in the cost-to-go value functions $Q_{t}(\hx{t-1}, \hxi{t})$.
	In this section, we follow a similar  process to what did in Section \ref{sec: MULTI.gen_eff_path}; nevertheless, the developments have different interpretations.

	\subsubsection{Conditional Assessment Problem for Realizations}
	\label{sec: MULTI.assessment_realization}
	
	Suppose that the history of the stochastic process up to and including stage $t$, $t=1, \ldots, T-1$, is given. 
	Recall that $\Cs{S}_{t+1} \subset \Omega_{t+1}$ is the set of realizations to be conditionally removed. 
	For each $\omega_{t} \in a(\Cs{S}_{t+1})$, let us define $\Cs{S}(\omega_{t}):=\Cs{C}(\omega_{t}) \cap \Cs{S}_{t+1}$ to  denote the set of all children of $\omega_{t}$ in $\Cs{S}_{t+1}$. Thus, 
	$\Cs{S}_{t+1}=\bigcup_{\omega_{t} \in a(\Cs{S}_{t+1})} \Cs{S}(\omega_{t})$. 
	For any $\omega_t \in a(\Cs{S}_{t+1})$, 
	we conditionally remove the realizations  in $\Cs{S}(\omega_{t})$ by restricting the ambiguity set $\Cs{P}_{t+1|\omega_{t}}$ to those probabilities $\bs{p}_{t+1}$ for which $p_{t+1}(\omega_{t+1})=0, \ \omega_{t+1} \in \Cs{S}(\omega_{t})$ (see Figure \ref{fig: realization_removal} for an illustration). 
	This ensures that all scenario paths that are going through any node in $\Cs{S}(\omega_{t})$ as part of their process are not in the support of  any  worst-case probability distribution $\bs{p}$ 
	induced by $\xi$.
	Conditioned on $\hx{t-1}$ and $\hxi{t}$, we define 
	the  {\it conditional assessment problem of realizations in $\Cs{S}_{t+1}$} as 
	\begin{equation}
	\label{eq: MULTI.VF_t_assessment}
	\begin{split}
	Q_{t}^{\text{CA}}(\hx{t-1}, \hxi{t}; \Cs{S}_{t+1}):=& \min_{x_{t} \in \Cs{X}_{t}} \ g_{t}(x_{t}, \xi_{t}) + \max_{\condbs{p}{t+1} \in  \cPcondCA{t}(\Cs{S}_{t+1})} \ \mathbb{E}_{\condbs{p}{t+1}}    \Bigg[ \ldots + \Bigg.\\
	\Bigg. & \max_{\bs{p}_{T} \in \Cs{P}_{T|\hxi{T-1}}  } \ \mathbb{E}_{\bs{p}_{T}} \left[   \min_{x_{T} \in \Cs{X}_{T}} g_{T}(x_{T}, \xi_{T}) \right]  \ldots \Bigg],
	\end{split}
	\end{equation}
	where for $\omega_t \in a(\Cs{S}_{t+1})$,  the conditional ambiguity set of distributions for the conditional assessment problem of realizations in $\Cs{S}_{t+1}$ is defined as $\cPcondCAnode{t}(\Cs{S}_{t+1}):=\cPcondnode{t} \cap \Big\{p_{t+1|\omega_{t}}(\omega_{t+1})=0, \ \omega_{t+1} \in \Cs{S}(\omega_{t}) \Big\}$. For  $\omega_t \notin a(\Cs{S}_{t+1})$, $\cPcondCAnode{t}(\Cs{S}_{t+1}):=\cPcondnode{t}$. 
	Notice that problem \eqref{eq: MULTI.VF_t_assessment} is similar to problem  \eqref{eq: MULTI.VF}, except for in stage $t+1$ where $\cPcond{t}$ is replaced with $\cPcondCA{t}(\Cs{S}_{t+1})$, with the interpretation mentioned above.  
	Moreover, notice that the assessment problem \eqref{eq: MULTI.VF_t_assessment} does not eliminate the realizations $\Cs{S}_{t+1}$ from the sample space $\Omega_{t+1}$; it only enforces $p_{t+1|\omega_{t}}(\omega_{t+1})=0$,   $\omega_{t+1} \in \Cs{S}(\omega_{t})$, for $\omega_t \in a(\Cs{S}_{t+1})$. As before, when $\Cs{S}_{t+1}=\emptyset$, \eqref{eq: MULTI.VF_t_assessment} reduces to \eqref{eq: MULTI.VF}.

	%

	\begin{figure}
		\centering
		\includegraphics[width=0.55\linewidth]{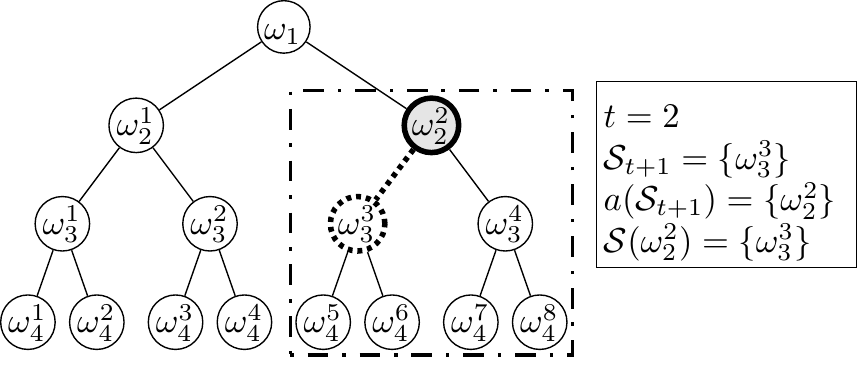}
		\caption{\label{fig: realization_removal} Conditional removal of  realization $\omega_3^3 \in \Omega_3$. The path from  node $\omega_3^3$ to its ancestor node $\omega_2^2=a(\omega_3^3)$ is shown with a thick dotted line, indicating the conditional removal of the  realization $\omega_3^3$. To conditionally remove the realization $\omega_3^3$ we must add a constraint $p_{3|\omega_2^2}(\omega_3^3)=0$ to the ambiguity set $\Cs{P}_{3|\omega_2^2}$. For nodes not shown in  thick black circles, including $\omega_3^3$,  we use the same notation for the cost-to-go function $Q_{t}(\cdot, \cdot)$ at stages $t=2,3,4$, both in \eqref{eq: MULTI.VF} and the conditional assessment problem  \eqref{eq: MULTI.VF_t_assessment}. 
			To check the conditional effectiveness of the realization $\omega_3^3$, we focus on the subtree rooted at $\omega_2^2=a(\omega_3^3)$, shown by the dotted box, and compare the optimal values of \eqref{eq: MULTI.VF} and \eqref{eq: MULTI.VF_t_assessment} (or, equivalently, \eqref{eq: MULTI.VF_assessment}) at the node $\omega_2^2$, shown in gray, given $x_1$.}
	\end{figure}

	\subsubsection{DP Reformulation of the Assessment Problem}
	\label{sec: MULTI.DP_realization_assessment}
	To mathematically define conditionally effective realizations for \eqref{eq: MULTI.robust}, we need to establish the relationship between the optimal values of  \eqref{eq: MULTI.VF} and \eqref{eq: MULTI.VF_t_assessment}. As in for the effectiveness of scenario paths in Section \ref{sec: MULTI.DP_path_assessment}, we first need to derive the DP reformulation of \eqref{eq: MULTI.VF_t_assessment}. 
	
	Consider the optimization problem \eqref{eq: MULTI.VF_t_assessment} at a given stage $t$,  $t=1, \ldots, T-1$, when all information from previous stages (given by $\hx{t-1}$ and $\hxi{t}$) is known. By the construction of \eqref{eq: MULTI.VF_t_assessment}, forcing the conditional probability of a realization at stage $t+1$ to zero does not affect the way the cost-to-go functions at stage $t^{\prime}$, for  $t^{\prime} > t$, are written. 
	Thus,  we have  
	\begin{equation}
	\label{eq: MULTI.VF_assessment}
	Q_{t}^{\text{CA}}(\hx{t-1}, \hxi{t}; \Cs{S}_{t+1}):=\min_{x_{t} \in \Cs{X}_{t}} \ g_{t}(x_{t}, \xi_{t}) + \max_{\condbs{p}{t} \in  \cPcondCA{t}(\Cs{S}_{t+1})} \ \ee{\condbs{p}{t}}{Q_{t+1}(\hx{t},\hxi{t+1})},
	\end{equation}
	where $Q_{t^{\prime}+1}(\hx{t^{\prime}},\hxi{t^{\prime}+1})$, $t^{\prime}=t, \ldots, T$, are defined as in \eqref{eq: MULTI.VF}. 
	For brevity, we denote the optimal value of the conditional assessment problem of the realizations $\Cs{S}_{2} \subset \Omega_2$, in stage $t=2$, with $Q_{1}^{\text{CA}}(\Cs{S}_{2})$. 
	Throughout the paper, we use notation $Q_{t}^{\text{CA}}(\cdot, \cdot; \Cs{S}_{t+1})$ to differentiate the cost-to-go function at stage $t$ for the conditional assessment problem of the realizations in $\Cs{S}_{t+1}$, presented in this section, from the cost-to-go function $Q_{t}(\cdot, \cdot)$ of the original problem \eqref{eq: MULTI.robust}, presented in Section \ref{sec: MULTI.DP}. 
	
	\begin{remark}
		Recall the interpretation  we stated at the end of Section \ref{sec: MULTI.assessment_realization} for the conditional ambiguity sets. If $\omega_t \notin a(\Cs{S}_{t+1})$, we have $Q_{t}^{\text{CA}}(\hx{t-1}, \hxi{t}; \Cs{S}_{t+1})=Q_{t}(\hx{t-1}, \hxi{t})$. 
		An illustration is provided in Figure \ref{fig: realization_removal}.  \qed
	\end{remark}
	
	\subsubsection{Definition and Properties}
	
	We  now formally  define the notion  of the conditional effectiveness of realizations for \eqref{eq: MULTI.robust} and discuss its properties. 
	

	\begin{proposition}
		\label{prop: compare_org_asses_real}
		Consider a fixed $t=1, \ldots, T-1$, a subset of realizations $\Cs{S}_{t+1} \subset \Omega_{t+1}$, a partial optimal policy  $\hstarx{t}=[x^*_1, \ldots, x^*_t]$ to \eqref{eq: MULTI.robust} up to stage $t$, and $\hxi{t}$.  
		Then, 
		$Q_{t}^{\text{CA}}(\hstarx{t-1}, \hxi{t}; \Cs{S}_{t+1}) \le  Q_{t}(\hstarx{t-1}, \hxi{t}) $.
		
	\end{proposition}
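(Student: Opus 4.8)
The plan is to reduce the multistage comparison to a single one-step inequality by exploiting the structure recorded in Section \ref{sec: MULTI.DP_realization_assessment}: conditionally removing realizations at stage $t+1$ leaves the downstream cost-to-go functions $Q_{t+1}, \ldots, Q_T$ of \eqref{eq: MULTI.robust} untouched. Concretely, \eqref{eq: MULTI.VF_assessment} reads $Q_t^{\text{CA}}(\hx{t-1}, \hxi{t}; \Cs{S}_{t+1}) = \min_{x_t \in \Cs{X}_t} g_t(x_t, \xi_t) + \max_{\bs{p}_{t+1} \in \cPcondCA{t}(\Cs{S}_{t+1})} \ee{\bs{p}_{t+1}}{Q_{t+1}(\hx{t}, \hxi{t+1})}$, while \eqref{eq: MULTI.VF} reads $Q_t(\hx{t-1}, \hxi{t}) = \min_{x_t \in \Cs{X}_t} g_t(x_t, \xi_t) + \max_{\bs{p}_{t+1} \in \cPcond{t}} \ee{\bs{p}_{t+1}}{Q_{t+1}(\hx{t}, \hxi{t+1})}$, with the \emph{same} function $Q_{t+1}$ inside both expectations. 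So the two problems differ only in the feasible region of the inner maximization.

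First I would record that, nodewise, $\cPcondCA{t}(\Cs{S}_{t+1}) \subseteq \cPcond{t}$: for $\omega_t \in a(\Cs{S}_{t+1})$ the former is obtained from $\cPcondnode{t}$ by adding the equality constraints $p_{t+1|\omega_t}(\omega_{t+1}) = 0$, $\omega_{t+1} \in \Cs{S}(\omega_t)$, and for $\omega_t \notin a(\Cs{S}_{t+1})$ the two sets coincide by definition. Hence, for every fixed $x_t \in \Cs{X}_t$, maximizing the same objective over the smaller set gives $\max_{\bs{p}_{t+1} \in \cPcondCA{t}(\Cs{S}_{t+1})} \ee{\bs{p}_{t+1}}{Q_{t+1}(\hx{t}, \hxi{t+1})} \le \max_{\bs{p}_{t+1} \in \cPcond{t}} \ee{\bs{p}_{t+1}}{Q_{t+1}(\hx{t}, \hxi{t+1})}$.

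Next I would add $g_t(x_t, \xi_t)$ to both sides and minimize over $x_t \in \Cs{X}_t$; since the minimum is monotone, this already yields $Q_t^{\text{CA}}(\hx{t-1}, \hxi{t}; \Cs{S}_{t+1}) \le Q_t(\hx{t-1}, \hxi{t})$ for \emph{any} feasible $\hx{t-1}$, in particular for $\hstarx{t-1}$. To mirror the proof of Proposition \ref{prop: compare_org_asses_path} and to surface the intermediate quantity that later appears in the definition of conditional effectiveness, I would also present it as the chain $Q_t^{\text{CA}}(\hstarx{t-1}, \hxi{t}; \Cs{S}_{t+1}) \le g_t(x_t^*, \xi_t) + \max_{\bs{p}_{t+1} \in \cPcondCA{t}(\Cs{S}_{t+1})} \ee{\bs{p}_{t+1}}{Q_{t+1}(\hstarx{t}, \hxi{t+1})} \le g_t(x_t^*, \xi_t) + \max_{\bs{p}_{t+1} \in \cPcond{t}} \ee{\bs{p}_{t+1}}{Q_{t+1}(\hstarx{t}, \hxi{t+1})} = Q_t(\hstarx{t-1}, \hxi{t})$, where the first step is suboptimality of $x_t^*$ in the minimization defining $Q_t^{\text{CA}}$, the second is the set inclusion above, and the equality is DP-optimality of $x_t^*$ for \eqref{eq: MULTI.VF} given $\hstarx{t-1}$ and $\hxi{t}$ (Remark \ref{rem: time_consistency_DP}).

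There is no genuinely hard step; the only point requiring care is well-definedness of the conditional assessment problem. If $\Cs{S}(\omega_t) = \Cs{C}(\omega_t)$ for some node $\omega_t$ consistent with $\hxi{t}$, then $\cPcondCA{t}(\Cs{S}_{t+1})$ may be empty and $Q_t^{\text{CA}}$ equals $+\infty$ by convention, so the statement is to be read under the standing assumption that this assessment problem is well-defined (with the convention, as in the scenario-path case, that such $\Cs{S}_{t+1}$ is conditionally effective by definition). The real content of the proposition is the structural observation that the downstream value functions are unchanged, which collapses the multistage comparison to the elementary fact that shrinking the ambiguity set cannot increase the worst-case expectation.
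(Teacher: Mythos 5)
Your proposal is correct and takes essentially the same route as the paper: the published proof is exactly your final chain---suboptimality of $x_t^*$ in the conditional assessment problem at stage $t$, the inclusion $\cPcondCA{t}(\Cs{S}_{t+1}) \subseteq \cPcond{t}$ applied to the same cost-to-go function $Q_{t+1}$, and the closing equality from DP-optimality (time consistency) of $x_t^*$. Your extra observations---that minimizing over $x_t$ gives the inequality for any feasible $\hx{t-1}$, and the well-definedness caveat when $\cPcondCA{t}(\Cs{S}_{t+1})$ is empty---are harmless refinements consistent with the paper's conventions.
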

	
	\begin{proof} 
		For any $\hstarx{t}$ and $\hxi{t}$, the corresponding worst-case expected problem at stage $t$ 
		in \eqref{eq: MULTI.VF_assessment} for $\Cs{S}_{t+1}$ is more restricted than the corresponding problem in \eqref{eq: MULTI.VF}. 
		This, combined with the suboptimality of $x^*_t$ to \eqref{eq: MULTI.VF_assessment}, implies that
		\begin{equation*}
		\begin{array}{ll}
		Q_{t}^{\text{CA}}(\hstarx{t-1},\hxi{t};\Cs{S}_{t+1}) \!\!\!\!\!\!& \le  g_{t}(x^*_{t}, \xi_{t}) + \max_{\condbs{p}{t+1} \in  \cPcondCA{t}(\Cs{S}_{t+1}) }  \ee{\condbs{p}{t+1}}{Q_{t+1}(\hstarx{t}, \hxi{t+1})}\\
		& \le g_{t}(x^*_{t}, \xi_{t}) + \max_{\condbs{p}{t+1} \in  \cPcond{t} }  \ee{\condbs{p}{t+1}}{Q_{t+1}(\hstarx{t}, \hxi{t+1})}\\
		& = Q_{t}(\hstarx{t-1}, \hxi{t}), 
		\end{array}  
		\end{equation*}
		which completes the proof. 
	\end{proof}
	
	
	\begin{definition}[Conditional effectiveness of realizations]
		\label{def: MULTI.cond_eff}
		Consider a fixed $t=1, \ldots, T-1$, a subset of realizations $\Cs{S}_{t+1} \subset \Omega_{t+1}$, partial optimal policy  $\hstarx{t}=[x^*_1, \ldots, x^*_t]$ to \eqref{eq: MULTI.robust}, and $\hxi{t}$.  
		A subset of realizations $\Cs{S}_{t+1} \subset \Omega_{t+1}$  is called {\it conditionally effective}  if $Q_{t}^{\text{CA}}(x^*_{[t-1]}, \hxi{t}; \Cs{S}_{t+1}) < Q_{t}(x^*_{[t-1]}, \hxi{t})$. A subset of realizations $\Cs{S}_{t+1} \subset \Omega_{t+1}$ is called {\it conditionally ineffective} if it is not conditionally effective.
	\end{definition}
	
	We conclude this section with a comment on the similarity between Definitions \ref{def: effective} and \ref{def: MULTI.cond_eff}.
	Observe that \eqref{eq: MULTI.VF} resembles the structure of \eqref{eq: robust}. Similarly, \eqref{eq: MULTI.VF_assessment} resembles the structure of \eqref{eq: removal}. Consequently, the relationship between   \eqref{eq: robust} and \eqref{eq: removal}, as stated in \eqref{eq: cost_rel}, can similarly be made about \eqref{eq: MULTI.VF} and \eqref{eq: MULTI.VF_assessment}. That is,  
	given $\hstarx{t-1}$ and $\hxi{t}$, by the proof of Proposition \ref{prop: compare_org_asses_real}, we have 
	\begin{equation}
	\label{eq: MULTI.cost_rel_realization}
	\begin{array}{ll}
	Q_{t}^{\text{CA}}(\hstarx{t-1}, \hxi{t}; \Cs{S}_{t+1})\!\!\!\!\!& \le  g_{t}(x^*_{t}, \xi_{t}) + \max_{\condbs{p}{t+1} \in  \cPcondCA{t}(\Cs{S}_{t+1}) } \! \Bs{E}_{\condbs{p}{t+1}}\!\!\left[Q_{t+1}(\hstarx{t}, \hxi{t+1})\right] \\
	& \le Q_{t}(\hstarx{t-1}, \hxi{t}). 
	\end{array}
	\end{equation}
	Observe that the relationships in \eqref{eq: MULTI.cost_rel_realization} are similar to those of \eqref{eq: cost_rel}. 
	This similarity is not by surprise. In fact, the conditional assessment problem \eqref{eq: MULTI.VF_t_assessment} is defined in a similar manner to \eqref{eq: removal}. Hence, Definitions \ref{def: effective} and \ref{def: MULTI.cond_eff} are similar. Consequently,  the results in \citep{rahimian2019} on the effective scenarios for \eqref{eq: robust} will hold for the conditional effectiveness of realizations in \eqref{eq: MULTI.robust}. This is not true for the effectiveness of scenario paths, defined in Section \ref{sec: MULTI.gen_eff_path}. 
	We shall shortly exploit this similarity in Section \ref{sec: MULTI.thms} to identify the effectiveness of scenario paths by connecting it to the conditional effectiveness of realizations along a path for \eqref{eq: MULTI.robust} formed via the total variation distance.

	\section{Multistage \dro\ with the Total Variation Distance}
	\label{sec: MULTI.TV}
	We now narrow down our focus to the multistage \dro\ formed via the total variation distance. 
	This model, referred to as T-\droV\ for short, is formulated in the same manner as \eqref{eq: MULTI.robust}, with the conditional ambiguity sets as follows: 
	\begin{equation}
	\label{eq: MULTI.TV_ambiguity_set}
	\cPcondnode{t}:=\Big\{\condbsnode{p}{t+1} \; | \; \text{V}(\condbsnode{p}{t+1},\bs{q}_{t+1|\omega_{t}}) \leq \gamma_{t+1}, \; \sum_{\omega_{t+1} \in \Cs{C}(\omega_{t})}  p_{t+1}(\omega_{t+1}) =1, \;   \condbsnode{p}{t+1} \ge \bs{0}\Big\},
	\end{equation}
	where $\text{V}(\condbsnode{p}{t+1},\bs{q}_{t+1|\omega_{t}}):=\frac{1}{2}\sum_{\omega_{t+1} \in \Cs{C}(\omega_{t})} \big|p_{t+1}(\omega_{t+1})- q_{t+1|\omega_{t}}(\omega_{t+1}) \big| $ is the total variation distance between two (conditional) distributions  $\condbsnode{p}{t+1}$ and $\bs{q}_{t+1|\omega_{t}}$. 
	The conditional ambiguity set $\cPcondnode{t}$, $t=1, \ldots, T-1$,  contains all conditional probability distributions $\condbsnode{p}{t+1}$ whose total variation distances to the nominal conditional distribution $\bs{q}_{t+1|\omega_{t}}$ are limited above by {\it the level of robustness} $\gamma_{t+1}$.  Note that $\text{V}(\condbsnode{p}{t+1},\bs{q}_{t+1|\omega_{t}})$ is bounded above by one and bounded below by zero. Hence, without loss of  generality, we assume $0 \le \gamma_{t+1} \le 1$. Also, the levels of robustness at each stage $t$, $t=1, \ldots, T-1$, may depend on $\omega_t$, and $\cPcondnode{t}$, $t=1, \ldots, T-1$, may depend on $\gamma_{t+1}$. We drop these dependencies for ease of exposition. 
	We can then obtain the cost-to-go function  $Q_{t}(\hx{t-1}, \hxi{t})$ as in \eqref{eq: MULTI.VF},  
	where the worst-case expected problem  $\max_{\condbs{p}{t+1} \in \cPcond{t}} \ee{\condbs{p}{t+1} }{Q_{t+1}(\hx{t}, \hxi{t+1})}$ is calculated via $\cPcond{t}$ in \eqref{eq: MULTI.TV_ambiguity_set}.

	\subsection{Risk-Averse Interpretation} \label{sec: MULTI.risk} 
	It is known that under suitable assumptions, a \dro\ model pertains to a risk-averse optimization model. As it is shown in \citep{rahimian2019} such a risk-averse interpretation plays an important role in identifying effective scenarios for a static \dro. Not surprisingly, we shall  shortly see that the risk-averse interpretation of T-\droV\ plays an important role in identifying the effectiveness of scenario paths and the conditional effectiveness of realization as well.

	The conditional ambiguity set $\cPcond{t}$, $t=1, \ldots, T-1$, defined in \eqref{eq: MULTI.TV_ambiguity_set}, is a convex compact subset of $\fM{t}$, the set of all discrete probability distributions induced by $\xi_{t+1}$, given $\hxi{t}$. Moreover, $Q_{t+1}(\hx{t}, \hxi{t+1})$ is integrable on $\fM{t}$ by the assumptions stated in Section \ref{sec: MULTI.DP}. Recall the dual representation of a coherent risk measure; see, e.g., \cite[Theorem~6.7]{Shapiro_Lecture_SP}, \cite[Theorem~3.1]{shapiro2012}, \cite[Theorem~2.2]{ruszczynski2006a}. By a similar procedure applied to conditional distributions, as done for example in \cite{pflug2014}, one can show that  the worst-case expected problem $\max_{\condbs{p}{t} \in \cPcond{t}} \ee{\condbs{p}{t}}{Q_{t+1}(\hx{t}, \hxi{t+1})}$ is equivalent to a real-valued conditional coherent risk mapping \cite{ruszczynski2006b,ruszczynski2006a,shapiro2012}.
	By induction and going backward in time, we can show that T-\droV\ is equivalent to a risk-averse stochastic optimization problem involving nested coherent risk measures.
	
	\begin{proposition}
		\label{prop: MULTI.risk}
		T-\droV\ is equivalent to
		\begin{equation}
		\label{eq: MULTI.TV_risk}
		\min_{x_{1} \in \Cs{X}_{1}} g_{1}(x_{1}, \xi_{1}) + \rho_{2|\hxi{1}}   \left[    \ldots + \rhocond{T}  \left[  \min_{x_{T} \in \Cs{X}_{T}} g_{T}(x_{T}, \xi_{T}) \right]  \ldots \right] , 
		\end{equation}
		where $	\rro{t+1|\hxi{t}}{\cdot}$, $t=1, \ldots, T-1$,  is the conditional coherent risk measure given by 
		\begin{equation*}
		\rro{t+1|\hxi{t}}{\cdot}:=\gamma_{t+1} \  \mathrm{sup}_{|\hxi{t}}\ [\cdot] + \left( 1-\gamma_{t+1}\right) \ccvar{\gamma_t}{\cdot|\hxi{t}}.
		\end{equation*} 
	\end{proposition}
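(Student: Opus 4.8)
The plan is to prove Proposition \ref{prop: MULTI.risk} by backward induction on the stages, the only genuine ingredient being a node-wise closed form for the worst-case conditional expectation over a total variation ball. Concretely, I would first show that for every stage $t$, every node $\omega_t \in \Omega_t$, every fixed $\hx{t}$, and every $\Cs{F}_{t+1}$-measurable integrable random variable $Z$, the linear program $\max_{\bs{p}_{t+1} \in \cPcondnode{t}} \sum_{\omega_{t+1} \in \Cs{C}(\omega_t)} p_{t+1}(\omega_{t+1})\, Z(\omega_{t+1})$ equals $\gamma_{t+1}$ times the conditional essential supremum of $Z$ plus $(1-\gamma_{t+1})$ times the conditional $\cvart$ of $Z$ relative to $\bs{q}_{t+1|\omega_{t}}$, i.e. exactly $\rro{t+1|\hxi{t}}{Z}$ with the risk mapping as stated; then I would substitute $Z = Q_{t+1}(\hx{t},\hxi{t+1})$ into the dynamic programming recursion \eqref{eq: MULTI.VF} and unroll it from $t=T$ down to $t=1$.

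For the node-wise identity, note that $\cPcondnode{t}$ in \eqref{eq: MULTI.TV_ambiguity_set} is a nonempty (it contains $\bs{q}_{t+1|\omega_{t}}$), convex, compact polytope --- the intersection of the probability simplex on $\Cs{C}(\omega_t)$ with the $\ell_1$-ball of radius $2\gamma_{t+1}$ around the nominal vector --- so the inner problem is a finite-dimensional linear program attaining its maximum. Using the dual representation of coherent risk measures \citep[e.g.,][Theorem~6.7]{Shapiro_Lecture_SP} and the explicit solution of this particular linear program, which is the conditional counterpart of the static computation in \citep{rahimian2019}, one obtains the asserted convex combination of $\esssup$ and $\cvart$; the details (identifying the worst-case dual multiplier and the $\cvart$ threshold with those of the static case applied to the reference measure $\bs{q}_{t+1|\omega_{t}}$) I would treat as routine. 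Since $0 \le \gamma_{t+1} \le 1$ and both the conditional essential supremum and the conditional $\cvart$ are coherent risk mappings, and coherence --- monotonicity, translation equivariance, positive homogeneity, subadditivity --- is preserved under convex combinations, $\rro{t+1|\hxi{t}}{\cdot}$ is a conditional coherent risk mapping, well-defined and finite on the continuation values by the integrability assumption of Section \ref{sec: MULTI.DP}.

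With this in hand the induction is bookkeeping. At $t=T$ there is no inner maximization, so $Q_T(\hx{T-1},\hxi{T}) = \min_{x_T \in \Cs{X}_T} g_T(x_T,\xi_T)$, the innermost bracket of \eqref{eq: MULTI.TV_risk}. Assuming that for some $t \le T-1$ the value $Q_{t+1}(\hx{t},\hxi{t+1})$ equals the nested-risk expression of \eqref{eq: MULTI.TV_risk} truncated to stages $t+1,\dots,T$, I substitute it into \eqref{eq: MULTI.VF}, replace $\max_{\bs{p}_{t+1} \in \cPcond{t}} \ee{\bs{p}_{t+1}}{Q_{t+1}(\hx{t},\hxi{t+1})}$ by $\rro{t+1|\hxi{t}}{\,\cdot\,}$ of that truncated expression via the node-wise identity, and obtain the stage-$t$ truncation of \eqref{eq: MULTI.TV_risk}. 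Taking $t=1$ and using that $\xi_1$ is degenerate --- so that $Q_1$ is precisely the optimal value of T-\droV\ --- gives the claimed equivalence. The recursion is legitimate because the conditional ambiguity sets in \eqref{eq: MULTI.TV_ambiguity_set} are specified independently node by node (a rectangular ambiguity set), so \eqref{eq: MULTI.VF} already encodes the correct nesting and no interchange of $\min$/$\max$ beyond the per-node linear programs is required.

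The step I expect to be the crux is the node-wise identity: recognizing the maximizer of the linear program over the total variation ball and matching it, coefficient by coefficient, against the $\gamma_{t+1}$-weighted combination of $\esssup$ and $\cvart$ relative to $\bs{q}_{t+1|\omega_{t}}$. Everything else --- the coherence of the resulting mapping and the backward unrolling --- follows mechanically once that identification is established and the standing finiteness and integrability assumptions are invoked; a minor but worth-stating point is checking that the per-node mappings assemble into a measurable conditional risk mapping, which is immediate on a finite scenario tree.
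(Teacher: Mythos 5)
Your proposal is correct and follows essentially the same route as the paper: the paper likewise identifies each node-wise worst-case expectation over the total variation ball with the conditional coherent risk mapping $\gamma_{t+1}\,\sup + (1-\gamma_{t+1})\,\cvart_{\gamma_{t+1}}$ by invoking the static two-stage result of \citep[Proposition~3]{rahimian2019} (equivalently \citep[Theorems~1--2]{jiang2018}) applied to the conditional nominal distribution, and then unrolls the DP recursion backward in time. The only difference is presentational: you sketch the per-node linear-programming derivation yourself rather than citing it, which is fine since that is exactly where the cited results come from.
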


	\begin{proof}
		By the argument above and defining $\rro{t+1|\hxi{t}}{\cdot}:=\max_{\condbs{p}{t} \in \cPcond{t}} \ee{\condbs{p}{t} }{\cdot}$ to denote  the conditional coherent risk measure, we obtain a risk-averse stochastic optimization with nested coherent risk measures. Now, the risk-averse interpretation of a \eqref{eq: robust}, see \citep[Proposition~3]{rahimian2019} or  \citep[Theorems~1--2]{jiang2018}, 
		completes  the proof. 
	\end{proof}

	In Proposition \ref{prop: MULTI.risk}, $\mathrm{sup}_{|\hxi{t}}\ [\cdot]$ is taken conditionally with respect to the support set $\Xi_{t+1}$ and $\ccvar{\gamma_{t+1}}{\cdot | \hxi{t}}$ is the conditional value-at-risk (CVaR)\footnote{Here, $\ccvar{\alpha}{Z}$ denotes the CVaR of $Z$ at the confidence level $\alpha$, $0<\alpha<1$, defined as  $\ccvar{\alpha}{Z}:=1/(1-\alpha)\int_{\alpha}^1 \vvar{\beta}{Z}\,d\beta$, where $
		\vvar{\beta}{Z}:=\inf\sset*{u}{\textrm{Pr}\{Z \leq u\}\geq \beta}
		$
		is the value-at-risk (VaR) at level $\beta$ \citep{rockafellar2000optimization}.} taken with respect to the nominal conditional distribution $\condbs{q}{t}$ at level $\gamma_{t+1}$. Per usual convention, we set
	$\ccvar{0}{\cdot | \hxi{t}}:=\ee{\condbs{q}{t}}{\cdot}$  and  $\ccvar{1}{\cdot | \hxi{t}}:= \mathrm{sup}_{|\hxi{t}}[\cdot]$, $t=1, \ldots, T-1$.
	Hence, 
	when $\gamma_{t+1}=0$ for $t=1, \ldots, T-1$,    T-\droV\ reduces to the risk-neutral model.
	As $\gamma_{t+1}$ increases, more weight is put on the worst-case cost, and T-\droV\ becomes more conservative. 
	When $\gamma_{t+1}=1$, T-\droV\ reduces to minimizing the  worst-case total cost $g_{1}(x_{1}, \xi_{1})+ g_{2}(x_{2}, \xi_{2})+ \ldots +g_{T}(x_{T}, \xi_{T})$ over the support $\Xi$ of $\xi$.

	\section{Identifying Effectiveness of Scenario Paths and Realizations for a Multistage \droV}
	
	\label{sec: MULTI.thms}
	
	For a general multistage \dro\ with a finite sample space, we introduced the notions of the effectiveness of scenario paths and the conditional effectiveness of realizations in Sections \ref{sec: MULTI.gen_eff_path} and \ref{sec: MULTI.gen_eff_realization}, respectively. In this section, we focus on T-\droV\ and present easy-to-check  conditions to identify the effectiveness of scenario paths/realizations.  
	
	Let us first focus on the conditional effectiveness of realizations along a scenario path.
	As discussed at the end  of Section \ref{sec: MULTI.gen_eff_realization}, by construction, effective scenarios for \eqref{eq: robust}, as stated in Definition \ref{def: effective},  are defined similarly to the  conditionally effective realizations for \eqref{eq: MULTI.robust}, as stated in Definition \ref{def: MULTI.cond_eff}. In particular,  one can make  the following observation for T-\droV. 

	\begin{observation}
		\label{thm: MULTI.cond_eff}
		Let   $x^*=[x^*_1, \ldots, x^*_T]$ be an optimal policy to T-\droV. Given $x^*_{[t-1]}$ and $\hxi{t}$, $t=1, \ldots, T-1$, the easy-to-check conditions  stated in \citep[Theorems~1--3]{rahimian2019} for \eqref{eq: robust} via the total variation distance hold for the conditional effectiveness of realizations $\Cs{S}_{t+1} \subset \Omega_{t+1}$ for T-\droV.
	\end{observation}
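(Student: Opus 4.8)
The plan is to show that, once the stage $t$ and the history $(\hstarx{t-1},\hxi{t})$ are fixed, the conditional assessment problem \eqref{eq: MULTI.VF_assessment} for T-\droV\ is literally an instance of the static assessment problem \eqref{eq: removal}, so that \citep[Theorems~1--3]{rahimian2019} apply under a direct dictionary. First I would set up this dictionary: the outer term $\min_{x_t\in\Cs{X}_t} g_t(x_t,\xi_t)$ in \eqref{eq: MULTI.VF_assessment} plays the role of $\min_{x\in\Cs{X}} g(x)$ in \eqref{eq: removal}; the cost-to-go $Q_{t+1}\big((\hstarx{t-1},x_t),\hxi{t+1}\big)$, viewed as a function of the decision $x_t$ and of the child node $\omega_{t+1}\in\Cs{C}(\omega_t)$, plays the role of the recourse value $Q(x,\omega)$; the child set $\Cs{C}(\omega_t)$ plays the role of the scenario set $\Omega$; the conditional ambiguity set $\cPcondnode{t}$ from \eqref{eq: MULTI.TV_ambiguity_set} plays the role of $\Cs{P}$; and the trace $\Cs{S}(\omega_t)=\Cs{S}_{t+1}\cap\Cs{C}(\omega_t)$ of the removed realizations plays the role of the removed scenario set $\Cs{S}$. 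The fact that legitimizes this identification is the observation recorded right after \eqref{eq: MULTI.VF_assessment}: forcing $p_{t+1|\omega_t}(\omega_{t+1})=0$ for $\omega_{t+1}\in\Cs{S}(\omega_t)$ leaves the cost-to-go functions $Q_{t'+1}$, $t'>t$, unchanged, so these are the same fixed data in \eqref{eq: MULTI.VF} and in \eqref{eq: MULTI.VF_assessment}, exactly as $Q(x,\omega)$ is data independent of $\Cs{S}$ in \eqref{eq: robust}--\eqref{eq: removal}.

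Next I would verify that the structural hypotheses used in \citep{rahimian2019} transfer. The set $\cPcondnode{t}$ in \eqref{eq: MULTI.TV_ambiguity_set} is precisely a total-variation ball of radius $\gamma_{t+1}\in[0,1]$ around the nominal conditional distribution $\bs{q}_{t+1|\omega_t}$ --- the exact setting of the cited theorems (equivalently, by Proposition \ref{prop: MULTI.risk}, the inner maximization is the coherent risk measure $\gamma_{t+1}\,\mathrm{sup}_{|\hxi{t}}[\,\cdot\,]+(1-\gamma_{t+1})\,\ccvar{\gamma_{t+1}}{\cdot\,|\hxi{t}}$ that those results exploit). Under the polyhedral assumptions of Section \ref{sec: MULTI.DP}, $\Cs{X}_t$ is nonempty, compact and polyhedral, and for each $\omega_{t+1}$ the map $x_t\mapsto Q_{t+1}\big((\hstarx{t-1},x_t),\hxi{t+1}\big)$ is a finite, real-valued polyhedral (hence convex) function, matching the hypotheses on $\Cs{X}$ and $Q(\cdot,\omega)$ behind \eqref{eq: robust}. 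The chain \eqref{eq: MULTI.cost_rel_realization} relating \eqref{eq: MULTI.VF} and \eqref{eq: MULTI.VF_assessment} mirrors \eqref{eq: cost_rel}, and Definition \ref{def: MULTI.cond_eff} mirrors Definition \ref{def: effective}; moreover, when $x_t^*$ is the stage-$t$ component of an optimal policy produced by the DP recursion, it is optimal for \eqref{eq: MULTI.VF} at $(\hstarx{t-1},\hxi{t})$ by the time consistency of Remark \ref{rem: time_consistency_DP}, so it plays the role of the optimal $x^*$ in the sufficient condition of Remark \ref{rem: sufficient_eff}.

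With these identifications the conclusion is immediate, which is why the statement is phrased as an observation: each of \citep[Theorems~1--3]{rahimian2019} is a statement about \eqref{eq: robust}/\eqref{eq: removal} whose proof uses only the total-variation form of the ambiguity set, the chain \eqref{eq: cost_rel}, convexity/polyhedrality of the feasible set and recourse, and optimality of $x^*$, and all of these transfer verbatim to \eqref{eq: MULTI.VF}/\eqref{eq: MULTI.VF_assessment}. Hence the same easy-to-check conditions --- now read off from the values $Q_{t+1}\big((\hstarx{t-1},x_t^*),\hxi{t+1}\big)$, $\omega_{t+1}\in\Cs{C}(\omega_t)$, the nominal conditional probabilities $q_{t+1|\omega_t}(\cdot)$, and the level of robustness $\gamma_{t+1}$ --- characterize the conditional effectiveness of $\Cs{S}_{t+1}\subset\Omega_{t+1}$ for T-\droV. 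The only place where genuine care is needed --- and the main, if modest, obstacle --- is the bookkeeping of the conditioning: everything must be carried out \emph{after} fixing $(\hstarx{t-1},\hxi{t})$, the reduced problem's decision variable is $x_t$ rather than the whole policy, and its ``scenarios'' range only over the children of the current node, so that $\Cs{S}_{t+1}$ enters solely through $\Cs{S}(\omega_t)$; once this localization is made explicit, no argument beyond \citep{rahimian2019} is required.
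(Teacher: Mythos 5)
Your proposal is correct and follows essentially the same route as the paper: the authors justify the Observation exactly by the structural identification you spell out (the discussion following \eqref{eq: MULTI.cost_rel_realization}, noting that \eqref{eq: MULTI.VF} and \eqref{eq: MULTI.VF_assessment} mirror \eqref{eq: robust} and \eqref{eq: removal}, with the conditional TV ball \eqref{eq: MULTI.TV_ambiguity_set} in place of $\Cs{P}$, so the two-stage results of \citep[Theorems~1--3]{rahimian2019} apply verbatim after fixing $(\hstarx{t-1},\hxi{t})$). Your added bookkeeping --- that only the trace $\Cs{S}(\omega_t)=\Cs{S}_{t+1}\cap\Cs{C}(\omega_t)$ matters and that time consistency makes $x^*_t$ the optimal ``first-stage'' decision of the reduced problem --- is consistent with, and merely makes explicit, what the paper leaves implicit.
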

	In other words,  given the history of decisions and stochastic process, one can easily identity conditionally effective and ineffective realizations for T-\droV\ by using the easy-to-check conditions proposed in \citep[Theorems~1--3]{rahimian2019}  for \eqref{eq: robust} via the total variation distance. 
	We briefly mention the general ideas of these results next. 
	
	\subsection{Background  on Identifying Effective Scenarios for a Two-Stage \droV}
	\label{sec: TV.background}
	
	When the distributional ambiguity is modeled via the total variation distance, \eqref{eq: robust} can be written  as 
	\begin{equation}
	\label{eq: TV.robust}
	\min_{x  \in \Cs{X}  } \ \left\{f(x ):=g (x ) + \max_{\bs{p}   \in \Cs{P}   }  \ 	\sum_{\omega \in \Omega  } p  (\omega  ) Q(x,\omega) 
	\right\}
	, \tag{2-\droV}
	\end{equation}
	where
	$
	\Cs{P}  :=\Big\{ \bs{p}\!:\! \text{V}(\bs{p}, \bs{q}) \leq \gamma, \
	\sum_{\omega   \in \Omega  } p  (\omega  ) = 1, \
	\bs{p}   \ge \mathbf{0} \Big\}
	$
	and $\text{V}(\bs{p}  ,\bs{q}  ):=\frac{1}{2}\sum_{\omega   \in \Omega  }|p(\omega  )-q  (\omega  )|$ denotes the total variation distance between two probability distributions $\bs{p}  :=[p  (\omega^1  ), \ldots, p  (\omega^n  )]^{\top}$ and $\bs{q}  :=[q  (\omega^1  ), \ldots, q  (\omega^n  )]^{\top}$. Assume that $0 < \gamma \le 1$. 
	Before we discuss the general structure of results in \citep{rahimian2019}, let us define some notation. For a fixed $x \in \Cs{X} $, let $h(x , \omega  ):=g (x )+Q(x,\omega)$. 
	For any $B \subset \Bs{R}$, we use $[\bs{h}(x ) \in B]$ 
	as shorthand notation for the set $\{\omega\!\in\!\Omega\!\!:\!\!h(x, \omega)\!\!\in\!\!B\}$.
	With some abuse of notation, let $\bs{h}(x):=[h(x, \omega^1), \ldots, h(x, \omega^n)]$. 
	For a fixed $\eta \in \Bs{R}$, we use $\Psi(x , \eta)$ 
	to denote $\sum_{\omega \in [\bs{h}(x ) \le \eta]} q(\omega)$. 
	Given $\beta \in [0,1]$, let $\vvar{\beta}{\bs{h}(x )}$ be the left-side $\beta$-quantile of distribution of $\bs{h}(x )$ or  equivalently, the VaR of $\bs{h}(x )$ at level $\beta$:  $\vvar{\beta}{\bs{h}(x )}:=\inf\{\eta\,:\, \Psi(x , \eta) \geq \beta\}$ \citep{rockafellar2002}.
	
	Motivated by the risk-averse interpretation of \eqref{eq: TV.robust} (see Proposition \ref{prop: MULTI.risk} with $T=2$), the authors in \cite{rahimian2019} define the following sets, collectively referred to as {\it primal categories},  that partition the scenario set $\Omega  $:
	$\cat{1}{x }:=\left[ \bs{h}(x ) < \vvar{\gamma}{\bs{h}(x )}\right] $, 
	$\cat{2}{x }:=\left[ \bs{h}(x ) = \vvar{\gamma}{\bs{h}(x )}\right]$, 
	$\cat{3}{x }:=\left[ \vvar{\gamma}{\bs{h}(x )}< \bs{h}(x ) <\sup_{\omega \in \Omega} h(x ,\omega) \right] $, 
	and 
	$\cat{4}{x }:=\left[ \bs{h}(x )=\sup_{\omega   \in \Omega  } h(x , \omega  )\right] $.  
	Let $(x^*,\bs{p}^*)$ solve \eqref{eq: TV.robust}. For a moment, suppose that the nominal probabilities are all positive, i.e., $q(\omega)>0$ for all $\omega \in \Omega$. Then, according to \citep{rahimian2019}, all scenarios in $\cat{3}{x^{*}} \cup \cat{4}{x^*}$ are effective, while scenarios in $\cat{1}{x^*}$ are ineffective. For scenarios in $\cat{2}{x^*}$, the situation is more delicate. For example, if $\sum_{\omega \in \cat{2}{x^*} } q(\omega)=\gamma$, all scenarios in $\cat{2}{x^*}$  are ineffective. Otherwise, if $\sum_{\omega \in \cat{2}{x^*} } q(\omega) > \gamma$ and there is only one scenario in $\cat{2}{x^*}$, that scenario must be effective. 
	Note that easy-to-check conditions may not  identify the effectiveness of all scenarios in $\cat{2}{x^*}$.
	We refer the readers to \citep[Theorems~1--3]{rahimian2019} 
	for detailed results. 
	
	

	\subsection{Main Result for a Multistage \droV} 
	\label{sec: MULTI.TV_main}

	Recall that according to Theorem \ref{thm: MULTI.cond_eff} one can identify the conditional effectiveness of realizations using the easy-to-check conditions stated in \citep[Theorems~1--3]{rahimian2019}. 
	We refer to the realizations whose conditional effectiveness is identified by these results as {\it identifiably conditionally effective or ineffective}.  We now present our main result, connecting the effectiveness of scenario paths to the conditional effectiveness of realizations along the path.

	\begin{theorem}
		\label{thm: MULTI.path_eff_ineff}
		A scenario path $\omega^{\prime}_{T} \in \Omega_{T}$ is effective for T-\droV\ if and only if $\Pi_{t}(\omega^{\prime}_{T})$ is identifiably  conditionally effective for T-\droV\ for all $t$, $t=2, \ldots, T$. 
	\end{theorem}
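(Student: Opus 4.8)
Write $\omega_t := \Pi_t(\omega'_T)$ for $t = 1, \ldots, T$, so that $\omega_T = \omega'_T$. The plan is to follow the single constraint $p_T(\omega_T) = 0$ that defines the path-assessment problem \eqref{eq: MULTI.robust_assessment} as it is carried backward through the dynamic programming recursion, converting each backward step into a one-node perturbation of a total variation risk mapping. Two structural facts are used throughout. First, by Remark \ref{rem: MULTI.DP_path} (equation \eqref{eq: MULTI.Proj_Observation}) removing $\omega'_T$ changes a cost-to-go function only at the nodes $\omega_1, \ldots, \omega_T$; at stage $T-1$ this yields the identity $Q_{T-1}^{\text{A}}(\hx{T-2}, \hxi{T-1}^{\omega_{T-1}}; \{\omega'_T\}) = Q_{T-1}^{\text{CA}}(\hx{T-2}, \hxi{T-1}^{\omega_{T-1}}; \{\omega_T\})$ as functions of $\hx{T-2}$, and for $t < T-1$ it shows that $Q_t^{\text{A}}(\cdot, \hxi{t}^{\omega_t}; \{\omega'_T\})$ is the original cost-to-go at $\omega_t$ with the cost-to-go of the unique child $\omega_{t+1}$ replaced by the smaller quantity $Q_{t+1}^{\text{A}}(\cdot, \hxi{t+1}^{\omega_{t+1}}; \{\omega'_T\})$. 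Second, for the ambiguity set \eqref{eq: MULTI.TV_ambiguity_set} at node $\omega_t$ one has a perturbation lemma: lowering the argument of $\rho_{t+1|\omega_t}[\cdot]$ at the single child $\omega_{t+1}$ by any positive amount strictly decreases $\rho_{t+1|\omega_t}[\cdot]$ if and only if every worst-case distribution attaining it assigns positive probability to $\omega_{t+1}$ --- the ``only if'' using that $\delta \mapsto \rho_{t+1|\omega_t}[Z - \delta\,\one_{\omega_{t+1}}]$ is convex, nonincreasing, with right-derivative $-\min\{p(\omega_{t+1}) : p \text{ optimal}\}$ at $\delta = 0$. Via the risk reformulation of Proposition \ref{prop: MULTI.risk} and the primal-category description of Section \ref{sec: TV.background}, this last condition is exactly the one under which \citep[Theorems~1--3]{rahimian2019}, applied conditionally through Observation \ref{thm: MULTI.cond_eff}, certify $\omega_{t+1}$ to be conditionally effective; in short, ``$\omega_{t+1}$ identifiably conditionally effective'' is equivalent to ``the perturbation at $\omega_{t+1}$ propagates through $\rho_{t+1|\omega_t}[\cdot]$ evaluated at $\hstarx{t}$''.

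\textbf{Sufficiency.} Assuming all $\omega_t$, $t = 2, \ldots, T$, are identifiably conditionally effective, I would show by backward induction that $Q_t^{\text{A}}(\hstarx{t-1}, \hxi{t}^{\omega_t}; \{\omega'_T\}) < Q_t(\hstarx{t-1}, \hxi{t}^{\omega_t})$ for $t = T-1, \ldots, 1$; the case $t = 1$ is the effectiveness of $\omega'_T$ in the sense of Definition \ref{def: MULTI.path_eff}. The base case $t = T-1$ is the identity above together with identifiable conditional effectiveness of $\omega_T$. For the step, bound $Q_t^{\text{A}}(\hstarx{t-1}, \hxi{t}^{\omega_t}; \{\omega'_T\})$ from above by using the feasible decision $x_t^*$; since $x_t^*$ is optimal for the original problem at $\omega_t$ (Remark \ref{rem: time_consistency_DP}), this bound equals $g_t(x_t^*, \xi_t^{\omega_t}) + \rho_{t+1|\omega_t}[Q_{t+1}^{\text{A}}(\hstarx{t}, \cdot; \{\omega'_T\})]$, whose argument, by the induction hypothesis, is strictly below $Q_{t+1}(\hstarx{t}, \cdot)$ precisely at $\omega_{t+1}$. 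Identifiable conditional effectiveness of $\omega_{t+1}$ and the perturbation lemma then force $\rho_{t+1|\omega_t}[Q_{t+1}^{\text{A}}(\hstarx{t}, \cdot; \{\omega'_T\})] < \rho_{t+1|\omega_t}[Q_{t+1}(\hstarx{t}, \cdot)]$, and adding $g_t(x_t^*, \xi_t^{\omega_t})$ to the right-hand side recovers $Q_t(\hstarx{t-1}, \hxi{t}^{\omega_t})$, closing the induction.

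\textbf{Necessity.} I would prove the contrapositive: if some $\omega_{t_0}$ on the path is not identifiably conditionally effective, then $Q_1^{\text{A}}(\{\omega'_T\}) = Q_1$. The key tool is the monotone squeeze, valid for every history $\hx{t-1}$,
\[
Q_t^{\text{CA}}(\hx{t-1}, \hxi{t}^{\omega_t}; \{\omega_{t+1}\}) \ \le\ Q_t^{\text{A}}(\hx{t-1}, \hxi{t}^{\omega_t}; \{\omega'_T\}) \ \le\ Q_t(\hx{t-1}, \hxi{t}^{\omega_t}),
\]
the left inequality because any distribution with $p(\omega_{t+1}) = 0$ cannot distinguish $Q_{t+1}$ from $Q_{t+1}^{\text{A}}(\cdot; \{\omega'_T\})$ and is feasible for the larger ambiguity set, the right one from $Q_{t+1}^{\text{A}}(\cdot; \{\omega'_T\}) \le Q_{t+1}$ and monotonicity of the risk mapping. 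Taking $t = t_0-1$, failure of identifiable conditional effectiveness of $\omega_{t_0}$ means (Observation \ref{thm: MULTI.cond_eff} and \citep[Theorems~1--3]{rahimian2019}) that $\omega_{t_0}$ sits below or at the relevant VaR threshold for $\hstarx{t_0-1}$, so there is a worst-case distribution at $\omega_{t_0-1}$ with zero mass on $\omega_{t_0}$ and $\hstarx{t_0-1}$ stays optimal for the conditional-assessment problem; hence the leftmost term equals $Q_{t_0-1}(\hstarx{t_0-2}, \hxi{t_0-1}^{\omega_{t_0-1}})$, the squeeze collapses to an equality, and propagating this equality up the chain node-by-node --- exactly as equalities are propagated in the proof of Proposition \ref{prop: compare_org_asses_path} --- yields $Q_1^{\text{A}}(\{\omega'_T\}) = Q_1$.

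\textbf{Main obstacle.} The delicate point, and where ``identifiably'' (rather than merely ``conditionally effective'') is essential, is the necessity direction: one must rule out that an outer minimization $\min_{x_t}$ at some stage above $t_0$ recovers a strictly smaller value by switching away from $x_t^*$ to a decision for which the removed leaf becomes active. This is precisely the content of the minimization-over-decisions analysis underlying \citep[Theorems~1--3]{rahimian2019} --- the part establishing that scenarios below the VaR threshold are genuinely ineffective with $x^*$ still optimal for the assessment problem --- and the whole proof hinges on transporting that analysis, conditionally, to each node of the path.
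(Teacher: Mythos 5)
Your sufficiency half is sound and is essentially the paper's forward direction: a backward induction along the path in which Lemma \ref{lem: MULTI.cond}\ref{lem: MULTI.cond_eff} supplies ``positive mass under every worst-case distribution,'' and your Danskin-type perturbation lemma (convex, nonincreasing $\delta\mapsto\max_{\bs{p}}\{\Bs{E}_{\bs{p}}[Z]-\delta p(\omega^{\prime}_{t+1})\}$ with strictly negative right derivative) replaces the paper's explicit split of the maximization over $\{p(\omega^{\prime}_{t+1})=0\}$ versus $\{p(\omega^{\prime}_{t+1})>0\}$; both routes are valid (your ``if''/``only if'' labels in that lemma are swapped, but that is cosmetic), and your squeeze inequality $Q^{\text{CA}}_{t}\le Q^{\text{A}}_{t}\le Q_{t}$ is a correct and rather clean observation.

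The genuine gap is in the necessity half, in two places. First, you argue the contrapositive for ``not identifiably conditionally effective,'' which includes conditionally \emph{unidentified} realizations; for an unidentified realization in $\cat{2}{\cdot}$ the leftover mass removal $\gamma-\sum_{\omega\in\cat{1}{\cdot}}q(\omega)$ can be smaller than its nominal probability, so \emph{every} worst-case distribution keeps it strictly positive and your claim ``there is a worst-case distribution at $\omega_{t_{0}-1}$ with zero mass on $\omega_{t_{0}}$'' fails; the statement is only provable (and the paper only proves it) when some realization on the path is \emph{identifiably conditionally ineffective}. Second, and more seriously, the final step is asserted rather than proved. Your squeeze yields $Q^{\text{A}}_{t_{0}-1}(\hstarx{t_{0}-2},\cdot;\{\omega^{\prime}_{T}\})=Q_{t_{0}-1}(\hstarx{t_{0}-2},\cdot)$ only at the optimal prefix; at stage $t_{0}-2$ the assessment problem minimizes over $x_{t_{0}-2}$, and for $x_{t_{0}-2}\neq x^{*}_{t_{0}-2}$ you only know $Q^{\text{A}}_{t_{0}-1}\le Q_{t_{0}-1}$, possibly strictly, so the minimization could in principle switch decisions and exploit the lowered continuation at the on-path child. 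Proposition \ref{prop: compare_org_asses_path} propagates \emph{inequalities} by evaluating a fixed suboptimal policy; it cannot propagate an \emph{equality} upward through a $\min$, and your squeeze cannot be iterated because the child $\omega_{t_{0}-1}$ need not be conditionally ineffective. Ruling out this re-optimization is exactly the content of the paper's ``$\Longleftarrow$'' argument: it takes $\hat{t}+1$ to be the \emph{largest} identifiably ineffective stage, builds $\bar{x}$ by re-solving the assessment problem below $\hat{t}+1$, shows by a primal-category case analysis that the worst-case conditional distributions at $\omega^{\prime}_{\hat{t}}$ coincide for the original and assessment problems (even though the value at that zero-probability child may drop and shift the VaR level, moving scenarios between categories), and then verifies the subdifferential optimality conditions of Lemma \ref{lem: MULTI.OPT_Cond} stage by stage up to the root. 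Appealing to ``the minimization-over-decisions analysis underlying \citep[Theorems~1--3]{rahimian2019}, transported conditionally to each node'' does not supply this: that two-stage analysis concerns forcing a scenario's probability to zero with an unchanged recourse function, whereas here the continuation \emph{function} itself is altered at the on-path child for non-optimal decisions, and controlling that cross-stage interaction is the missing (and hardest) part of the proof.
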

	
	

	As a result of  Theorem \ref{thm: MULTI.path_eff_ineff}, in order to identify the effectiveness of a scenario path for T-\droV, we need to first identify the conditional effectiveness of all realizations along the path using the easy-to-check conditions stated in \citep[Theorems~1--3]{rahimian2019}. 
	If all realizations along the path are identifiably conditionally effective, then the scenario path is effective for  T-\droV. Otherwise, if there is at least one realization along the path that is identifiably conditionally ineffective, then the scenario path is ineffective  for  T-\droV. 
	As in \cite{rahimian2019}, we might not be able to identify the effectiveness of all scenario paths for T-\droV\ because the easy-to-check conditions might not be able to do so for the conditional effectiveness of realizations. 
	Figure \ref{fig: results} illustrates the usage of Theorem \ref{thm: MULTI.path_eff_ineff}  to identify the effectiveness of scenario paths for T-\droV. 

	\begin{figure}[!htbp]
		\centering
		\includegraphics[width=0.43\linewidth]{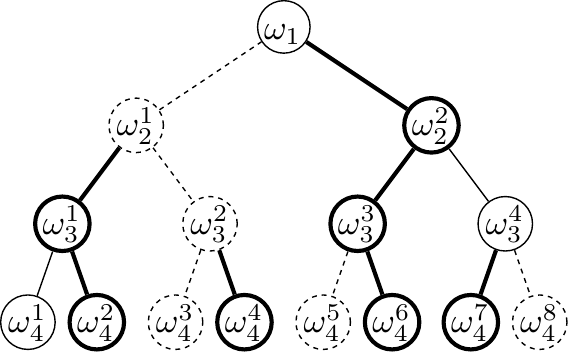}
		\caption{\label{fig: results} An illustration of the conditional effectiveness of realizations and the effectiveness of scenario paths. The nodes corresponding to conditionally effective realizations and their paths to their ancestors are shown with thick black solid  lines. Conditionally ineffective realizations  are shown with thin dotted black lines, and conditionally unidentified realizations are shown with thin solid black  lines. All the scenario paths at the left subtree (i.e., $\omega_4^1, \omega^2_4, \omega_4^3, \omega^4_4$) are ineffective because they are all going through at least one identifiably conditionally ineffective realization, for example, $\omega_2^1$. Scenario paths $\omega_4^5$ and $\omega_4^8$ are ineffective as well, as they both are going through an identifiably conditionally ineffective realization in the last stage. The only effective scenario path is $\omega_4^6$ because all realizations along its path are identifiably conditionally effective. Scenario path $\omega_4^7$ is unidentified because the realization $\omega_3^4$ is conditionally unidentified. Note that while the effectiveness of $\omega_3^4$ cannot be identified, the effectiveness of the scenario path $\omega_4^8$ can be identified because it goes  through an identifiably conditionally ineffective realization.}
	\end{figure}

	
	We now turn our attention to the proof of Theorem \ref{thm: MULTI.path_eff_ineff}. Before presenting the proof, we state  two technical lemmas. 
	Lemma \ref{lem: MULTI.cond} makes a connection between the maximizers of a worst-case expected problem and an identifiably conditionally effective/ineffective realization. 
	Lemma \ref{lem: MULTI.OPT_Cond} states  optimality conditions for the assessment problem of scenario paths in  $\Cs{S} \subset \Omega_{T}$ for T-\droV. 

	\begin{lemma}
		\label{lem: MULTI.cond}
		Let   $x^*=[x^*_1, \ldots, x^*_T]$ be an optimal policy  
		to  T-\droV. Consider a realization $\omega_t \in \Omega_t$ at stage $t$, where $t=1, \ldots, T-1$. Given $x^*_{[t]}$ and $\hxi{t}^{\omega_t}$, let $\Cs{P}^{*}_{t+1|\omega_t}$ denote the set of optimal (conditional) probability distributions to the worst-case expected problem at  stage $t$, i.e., 
		\begin{equation}
		\label{eq: Pstar}
		\Cs{P}^{*}_{t+1|\omega_t}:= \argmax_{\condbsnode{p}{t} \in  \Cs{P}_{t+1|\omega_t} } \  \ee{\condbsnode{p}{t}}{Q_{t+1}(\hstarx{t}, \hxi{t+1}^{\omega_{t+1}})},     
		\end{equation}
		where $\Cs{P}_{t+1|\omega_t}$ is defined as in \eqref{eq: MULTI.TV_ambiguity_set}. The following statements hold for $\omega_{t+1} \in \Cs{C}(\omega_t) \subset \Omega_{t+1}$. 
		\begin{enumerate}[label=(\roman*)]
			\item\label{lem: MULTI.cond_eff} If $\omega_{t+1} $ is identifiably conditionally effective for T-\droV, then,  for any  $\bs{p}_{t+1} \in \Cs{P}^{*}_{t+1|\omega_{t}}$, we have $p_{t+1}(\omega_{t+1})>0$. Moreover, 
			$g_{t}(x^*_{t}, \xi_{t}) +  \max\limits_{\condbs{p}{t} \in  \Cs{P}^{\text{CA}}_{t+1|\omega_t}(\Cs{S}_{t+1})}  \ee{\condbs{p}{t}}{Q_{t+1}(\hstarx{t}, \hxi{t+1})} 
			< Q_{t}(\hstarx{t-1}, \hxi{t}^{\omega_{t}})$.
			
			\item\label{lem: MULTI.cond_ineff}  If $\omega_{t+1}$ is identifiably conditionally ineffective for the T-\droV, then, for any  $\bs{p}_{t+1} \in \Cs{P}^{*}_{t+1|\omega_{t}}$, we have $p_{t+1}(\omega_{t+1})=0$. Moreover, $ Q_{t}^{\text{CA}}(\hstarx{t-1}, \hxi{t}^{\omega_{t}} ; \{\omega_{t+1}\})  = Q_{t}(\hstarx{t-1}, \hxi{t}^{\omega_{t}}) $. 
			
		\end{enumerate}	
	\end{lemma}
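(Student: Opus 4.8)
The plan is to reduce the whole statement to the two-stage results \citep[Theorems~1--3]{rahimian2019} through the dynamic-programming structure of T-\droV. The key observation is that, once the partial optimal policy $\hstarx{t-1}$ and the history $\hxi{t}^{\omega_t}$ are fixed, the stage-$t$ problem $\min_{x_t\in\Cs{X}_t} g_t(x_t,\xi_t)+\max_{\condbsnode{p}{t}\in\cPcondnode{t}}\ee{\condbsnode{p}{t}}{Q_{t+1}(\hstarx{t-1},x_t,\hxi{t+1})}$ is an instance of \eqref{eq: TV.robust}: the ``scenario set'' is $\Cs{C}(\omega_t)$, the nominal distribution is $\bs{q}_{t+1|\omega_t}$, the level of robustness is $\gamma_{t+1}$, the ambiguity set is $\cPcondnode{t}$ of \eqref{eq: MULTI.TV_ambiguity_set}, and the role of $Q(x,\omega)$ is played by $Q_{t+1}(\hstarx{t-1},x_t,\hxi{t+1}^{\omega_{t+1}})$, a finite, integrable function. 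By Remark \ref{rem: time_consistency_DP}, $x^*_t$ is an optimal solution of this $2$-\droV, so $\Cs{P}^{*}_{t+1|\omega_t}$ of \eqref{eq: Pstar} is exactly its set of worst-case distributions; and by Observation \ref{thm: MULTI.cond_eff}, ``$\omega_{t+1}$ identifiably conditionally effective (resp.\ ineffective)'' means ``$\omega_{t+1}$, as a scenario of this $2$-\droV, certified effective (resp.\ ineffective) by \citep[Theorems~1--3]{rahimian2019}.'' Everything after this step is a transcription of facts from that reference.

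For part \ref{lem: MULTI.cond_eff}: a scenario certified effective by \citep[Theorems~1--3]{rahimian2019} lies in a primal category of $x^*_t$ --- $\cat{3}{x^*_t}\cup\cat{4}{x^*_t}$, or the exceptional singleton inside $\cat{2}{x^*_t}$ --- for which the explicit description of the total-variation worst-case distribution (mass is shifted only onto the highest-cost scenarios and removed only from the lowest-cost ones) forces \emph{every} $\bs p_{t+1}\in\Cs{P}^{*}_{t+1|\omega_t}$ to give $\omega_{t+1}$ strictly positive probability; this is the first claim. The ``moreover'' inequality is the T-\droV\ version of the sufficient condition of Remark \ref{rem: sufficient_eff}: \citep{rahimian2019} shows that for such scenarios, enforcing $p_{t+1}(\omega_{t+1})=0$ \emph{already at the fixed $x^*_t$} strictly decreases the inner worst-case value, i.e.\ $g_t(x^*_t,\xi_t)+\max_{\condbs{p}{t}\in\cPcondCAnode{t}(\{\omega_{t+1}\})}\ee{\condbs{p}{t}}{Q_{t+1}(\hstarx{t},\hxi{t+1})}<Q_t(\hstarx{t-1},\hxi{t}^{\omega_t})$.

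For part \ref{lem: MULTI.cond_ineff}, symmetrically: a scenario certified ineffective lies in $\cat{1}{x^*_t}$ (or in a subset of $\cat{2}{x^*_t}$ certified entirely ineffective), and the same worst-case-distribution description forces $p_{t+1}(\omega_{t+1})=0$ for every $\bs p_{t+1}\in\Cs{P}^{*}_{t+1|\omega_t}$. The ``moreover'' equality is precisely the assertion that $\omega_{t+1}$ is conditionally ineffective in the sense of Definition \ref{def: MULTI.cond_eff}: the bound $Q_t^{\text{CA}}(\hstarx{t-1},\hxi{t}^{\omega_t};\{\omega_{t+1}\})\le Q_t(\hstarx{t-1},\hxi{t}^{\omega_t})$ is Proposition \ref{prop: compare_org_asses_real}, while the reverse inequality --- equality of the \emph{re-optimized} values --- is what \citep[Theorems~1--3]{rahimian2019} establish for ineffective scenarios.

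I expect the real work to be concentrated in the ``moreover'' of part \ref{lem: MULTI.cond_ineff} and in the exceptional $\cat{2}{x^*_t}$ situations, because there the conclusion concerns the re-optimized value $Q_t^{\text{CA}}$ rather than a fixed worst-case distribution, so it genuinely needs the full content of \citep[Theorems~1--3]{rahimian2019} and not merely the support of $\Cs{P}^{*}_{t+1|\omega_t}$. A secondary point is to invoke the general-support version of the primal-category analysis when $\bs q_{t+1|\omega_t}$ lacks full support on $\Cs{C}(\omega_t)$, and to handle the degenerate case $\Cs{C}(\omega_t)=\{\omega_{t+1}\}$, where $\cPcondCAnode{t}(\{\omega_{t+1}\})$ is empty and the statements hold only under the $+\infty$ convention (so such $\omega_{t+1}$ must be excluded from, or treated separately in, the ``moreover'' of part \ref{lem: MULTI.cond_eff}).
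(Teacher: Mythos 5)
Your proposal is correct and follows essentially the same route as the paper: reduce the stage-$t$ subproblem to a two-stage \droV\ via the DP equations and time consistency of $x^*$, invoke the easy-to-check results of \citep{rahimian2019} (Theorems 1--3 together with their Proposition 4 on the worst-case probabilities) for the support claims, obtain the strict inequality in part (i) from the positivity of $p_{t+1}(\omega_{t+1})$ over all maximizers together with finiteness, and obtain the equality in part (ii) from Definition \ref{def: MULTI.cond_eff} combined with Proposition \ref{prop: compare_org_asses_real}. Your extra attention to zero nominal probabilities and the degenerate single-child case goes slightly beyond the paper's proof but does not change the argument.
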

	
	\begin{proof}
		The proof of the first parts in (i) and (ii) follows from \citep[Theorems~1--3]{rahimian2019}, combined with \citep[Proposition~4]{rahimian2019}. 
		To prove the second part of (i), first note that we have the equality $Q_{t}(\hstarx{t-1}, \hxi{t}^{\omega_{t}})= g_{t}(x^*_{t}, \xi_{t}) + \max\limits_{\condbs{p}{t} \in  \Cs{P}_{t+1|\omega_t}} \ee{\condbs{p}{t}}{Q_{t+1}(\hstarx{t}, \hxi{t+1})} $ as  argued in the proof of Proposition \ref{prop: compare_org_asses_real} and by the time consistency of $x^*$. The inequality then follows from the fact that $p_{t+1}(\omega_{t+1})>0$ and the finiteness of the sample space $\hXi{t+1}$. 
		The second part of (ii) is immediate from Definition \ref{def: MULTI.cond_eff}. 
	\end{proof}
	
	\begin{lemma}
		\label{lem: MULTI.OPT_Cond}
		
		Consider a feasible policy $\bar{x}=[\bar{x}_{1}, \ldots, \bar{x}_{T}]$ to T-\droV. 
		Let $\Cs{N}_{\Cs{X}_{t}}(\bar{x}_{t})$ denote the normal cone of  $\Cs{X}_{t}$ at $\bar{x}_{t}$, $t=1, \ldots, T$, defined as 
		$\Cs{N}_{\Cs{X}_{t}}(\bar{x}_{t})=\sset*{s \in \Bs{R}^{n_{t}}}{s^{\top}(x_{t}-\bar{x}_{t}) \le 0 \; \ \text{for all} \ x_{t} \in \Cs{X}_{t}}$. 
		Moreover, let 
		\begin{equation}
		\begin{array}{ll}
		\bar{\Cs{P}}^{\text{A}}_{T|\hxi{T-1}}(\Cs{S}):=& \argmax_{\bs{p}_{T} \in  \Cs{P}^{\text{A}}_{T|\hxi{T-1}}(\Cs{S})} \  \ee{\bs{p}_{T}}{Q_{T}^{\text{A}}\Big(\bar{x}_{[T-1]}, \hxi{T}; \Cs{S}\Big)} \ \ \text{ and } 
		\\
		\bar{\Cs{P}}^{\text{A}}_{t+1|\hxi{t}}(\Cs{S})\ :=& \argmax_{\bs{p}_{t+1} \in  \Cs{P}_{t+1|\hxi{t}}} \  \ee{\bs{p}_{t+1}}{Q_{t+1}^{\text{A}}\Big(\bar{x}_{[t]}, \hxi{t+1}; \Cs{S}\Big)},  \label{eq: Pbar}
		\end{array}
		\end{equation}
		$t=1, \ldots, T-2$. 
		The policy $\bar{x}$ is optimal to  the assessment problem of scenario paths in $\Cs{S} \subset \Omega_{T}$ corresponding to T-\droV\ if and only if 
		\begin{enumerate}[label=(\roman*)]
			\item $0 \in \partial g_{T}(\bar{x}_{T}, \xi_{T})+ \Cs{N}_{\Cs{X}_{T}}(\bar{x}_{T})$ holds true for every $\hxi{T}$, 
			\item $0 \in \partial g_{T-1}(\bar{x}_{T-1}, \xi_{T-1})+ \Cs{N}_{\Cs{X}_{T-1}}(\bar{x}_{T-1})+ \conv{\bigcup_{\bs{p}_{T} \in  \bar{\Cs{P}}^{\text{A}}_{T|\hxi{T-1}}(\Cs{S})}   \ee{\bs{p}_{T}}{\partial Q_{T}^{\text{A}}\Big(\bar{x}_{[T-1]}, \hxi{T}; \Cs{S}\Big)} }$ holds true for every $\hxi{T-1}$,  and 
			\item 
			$   
			0 \in \partial g_{t}(\bar{x}_{t}, \xi_{t})+ \Cs{N}_{\Cs{X}_{t}}(\bar{x}_{t})+ \conv{\bigcup_{\bs{p}_{t+1} \in  \bar{\Cs{P}}^{\text{A}}_{t+1|\hxi{t}}(\Cs{S}) } \ee{\bs{p}_{t+1}}{\partial Q_{t+1}^{\text{A}}\Big(\bar{x}_{[t]}, \hxi{t+1}; \Cs{S}\Big)}  }
			$
			holds true for every $\hxi{t}$ and $t=1, \ldots, T-2$, 
		\end{enumerate}
		where 
		$\conv{\cdot}$ denotes the convex hull. 
		
	\end{lemma}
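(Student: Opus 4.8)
The plan is to prove Lemma~\ref{lem: MULTI.OPT_Cond} by backward induction through the dynamic programming recursion \eqref{eq: MULTI.VF_path_DP_T}--\eqref{eq: MULTI.VF_path_DP_t} of the path assessment problem, reducing optimality at each node to the first-order optimality condition for minimizing a finite convex function over a polyhedron, and then reassembling the node-wise conditions through the Bellman characterization of optimal policies, i.e.\ Remark~\ref{rem: time_consistency_DP} applied to the nested assessment problem \eqref{eq: MULTI.robust_assessment}.

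First I would record that, under the polyhedral assumptions of Section~\ref{sec: MULTI.DP}, the recursion \eqref{eq: MULTI.VF_path_DP_T}--\eqref{eq: MULTI.VF_path_DP_t} is a \emph{nested convex} program: an easy backward induction shows that each $Q_t^{\text{A}}(\cdot,\hxi{t};\Cs{S})$ is finite and polyhedral (piecewise-linear convex) in its decision argument. Indeed $Q_T^{\text{A}}=Q_T$ inherits this from the polyhedrality of $g_T$ and compactness of $\Cs{X}_T$; and if $Q_{t+1}^{\text{A}}(\cdot,\hxi{t+1};\Cs{S})$ is polyhedral, then for fixed $x_{[t]}$ the map $\bs{p}_{t+1}\mapsto\sum_{\omega_{t+1}}p_{t+1}(\omega_{t+1})Q_{t+1}^{\text{A}}(x_{[t]},\omega_{t+1};\Cs{S})$ is linear over the polytope $\cPcondA{T}(\Cs{S})$ (at stage $T-1$) or $\Cs{P}_{t+1|\hxi{t}}$ (at earlier stages), so the worst-case expected problem equals a pointwise maximum over the finitely many vertices of that polytope of polyhedral functions of $x_{[t]}$, hence is polyhedral; adding $g_t$ and partially minimizing over $\Cs{X}_t$ preserves this. (If some $\cPcondA{T}(\Cs{S})=\emptyset$, the assessment problem is $+\infty$ by convention and any policy is vacuously optimal, so we may assume the polytopes are nonempty; in that case the worst-case expected problems are moreover coherent risk mappings, cf.\ Proposition~\ref{prop: MULTI.risk}.)

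Next I would invoke, at each node, the standard fact that $\bar{x}_t$ minimizes a finite convex function $\phi_t$ over the polyhedron $\Cs{X}_t$ if and only if $0\in\partial\phi_t(\bar{x}_t)+\Cs{N}_{\Cs{X}_t}(\bar{x}_t)$. At stage $T$ the subproblem \eqref{eq: MULTI.VF_path_DP_T} has objective $g_T(\cdot,\xi_T)$ only (since $Q_{T+1}\equiv 0$), which is exactly~(i). At stage $T-1$ and at stages $t\le T-2$ the objective in \eqref{eq: MULTI.VF_path_DP_T-1} and \eqref{eq: MULTI.VF_path_DP_t} is $g_t(\cdot,\xi_t)$ plus the worst-case expected problem, so I need the subdifferential of the latter at $\hbarx{t}$. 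By the Moreau--Rockafellar sum rule (all summands finite convex) and nonnegativity of the weights, $\partial_{x_t}\big(\sum_{\omega_{t+1}}p_{t+1}(\omega_{t+1})Q_{t+1}^{\text{A}}(\hbarx{t},\omega_{t+1};\Cs{S})\big)=\ee{\bs{p}_{t+1}}{\partial_{x_t}Q_{t+1}^{\text{A}}(\hbarx{t},\cdot;\Cs{S})}$; and by the subdifferential formula for a pointwise maximum of convex functions --- which here is a maximum over the finitely many vertices of a polytope, so no closure or constraint qualification is needed --- the subdifferential of the worst-case expected problem at $\hbarx{t}$ equals $\conv{\bigcup_{\bs{p}_{t+1}\in\bar{\Cs{P}}}\ee{\bs{p}_{t+1}}{\partial_{x_t}Q_{t+1}^{\text{A}}(\hbarx{t},\cdot;\Cs{S})}}$, with $\bar{\Cs{P}}$ the argmax set in \eqref{eq: Pbar}. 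Specializing the ambiguity set to $\cPcondA{T}(\Cs{S})$ at stage $T-1$ and to $\Cs{P}_{t+1|\hxi{t}}$ at stages $t\le T-2$ yields~(ii) and~(iii). Since each subproblem is convex, these first-order conditions are necessary \emph{and} sufficient for node-wise optimality, and by the Bellman characterization (Remark~\ref{rem: time_consistency_DP}, applied to \eqref{eq: MULTI.robust_assessment}) node-wise optimality at all nodes is equivalent to optimality of $\bar{x}$; this closes both directions of the ``if and only if''.

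The step I expect to require the most care, rather than bookkeeping, is the Danskin-type interchange in the subdifferential of the worst-case expected problem: simultaneously exchanging $\partial$ with $\ee{\bs{p}}{\cdot}$ inside and with $\max_{\bs{p}}$ outside, and confirming that the resulting convex hull of expectations of subdifferentials is exactly the object built from \eqref{eq: Pbar}. The finite sample space and polyhedral data remove the usual obstructions (closedness of the convex hull, attainment of the supremum, constraint qualifications), but one still has to argue carefully that the set of maximizing distributions at a node depends only on the history $(\hbarx{t},\hxi{t})$ --- which is precisely why the statement must pin $\bar{x}$ down as a partial policy in \eqref{eq: Pbar} --- and that the convex-hull formula is compatible with the nesting, so that the per-node conditions aggregate into a statement about the policy $\bar{x}$. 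A minor additional point is that $g_t$ and $Q_t^{\text{A}}$ may be nonsmooth at $\bar{x}_t$, which is why subdifferentials rather than gradients appear throughout; under the polyhedral assumptions this is routine.
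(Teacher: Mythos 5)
Your proposal is correct and follows essentially the same route as the paper: write the DP recursion of the assessment problem, at each stage characterize optimality of $\bar{x}_t$ by $0\in\partial(\cdot)+\Cs{N}_{\Cs{X}_t}(\bar{x}_t)$, use the Moreau--Rockafellar sum rule together with the max-function subdifferential formula (the paper cites \cite[Corollary~4.3.2]{hiriart2001}, while you argue it via the vertices of the polytope of conditional distributions, which yields the same convex-hull expression over the argmax set in \eqref{eq: Pbar}), and then aggregate the node-wise conditions backward in time, with the equivalence to policy optimality resting on the DP/time-consistency convention of Remark \ref{rem: time_consistency_DP}, exactly as the paper does implicitly. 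Your added polyhedrality induction and the handling of an infeasible $\cPcondA{T}(\Cs{S})$ are harmless elaborations of the same argument.
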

	
	\begin{proof}
		Let us consider the DP reformulation \eqref{eq: MULTI.VF_path_DP_T}--\eqref{eq: MULTI.VF_path_DP_t} of the assessment problem of scenario paths in $\Cs{S} \subset \Omega_{T}$ corresponding to T-\droV, similar to those in Section \ref{sec: MULTI.DP_path_assessment}, where the conditional ambiguity sets are formed as in \eqref{eq: MULTI.TV_ambiguity_set}.
		Similar to the proof of  \cite[Proposition~3.4]{Shapiro_Lecture_SP}, using \cite[Theorem~51]{shapiro2003duality} and Moreau-Rockafellar  theorem \cite[Theorem~7.4]{Shapiro_Lecture_SP}, we can write the optimality conditions at stage $t=T$ as $0 \in \partial g_{T}(\bar{x}_{T}, \xi_{T})+ \Cs{N}_{\Cs{X}_{T}}(\bar{x}_{T})$. 
		At $t=T-1$,  the optimality conditions can be stated as 
		$  0 \in \partial g_{T-1}(\bar{x}_{T-1}, \xi_{T-1})+ \Cs{N}_{\Cs{X}_{T-1}}(\bar{x}_{T-1})+ \partial \! \left\lbrace \max_{\bs{p}_{T} \in  \Cs{P}^{\text{A}}_{T|\hxi{T-1}}(\Cs{S}) } \! \Bs{E}_{\bs{p}_{T}}\!\!\left[Q_{T}^{\text{A}}\Big(\bar{x}_{[T-1]}, \hxi{T}; \Cs{S}\Big)\right] \right\rbrace$
		holds true for every $\hxi{T-1}$. 
		Now, using \cite[Corollary~4.3.2]{hiriart2001} yields the optimality conditions at $t=T-1$. The  proof for $t=T-2, \ldots, 1$ follows similarly. 
	\end{proof}
	
	Note that when $\Cs{S}=\emptyset$, Lemma \ref{lem: MULTI.OPT_Cond} gives the optimality conditions for  T-\droV. 

	\begin{proof}[Proof of Theorem \ref{thm: MULTI.path_eff_ineff}]
		
		Let $\Pi_{t}(\omega^{\prime}_{T})=\omega^{\prime}_{t}$, for $t=1, \ldots, T$, i.e., the scenario path $\omega^{\prime}_{T}$ is given with the sequence $(\omega^{\prime}_{1}, \ldots, \omega^{\prime}_{T})$, where $\omega_{1}=\omega^{\prime}_{1}$. 
		Moreover, let   $x^*=[x^*_1, \ldots, x^*_T]$ be an optimal policy to T-\droV\, and $\bs{p}^{*}=\bs{p}^*(x^*)$
		denote an optimal worst-case probability distribution corresponding to $x^*$. 
		We first prove that if $\omega^{\prime}_{t+1}$ is identifiably conditionally effective for all $t=1, \ldots, T-1$, then $\omega^{\prime}_{T}$ is effective. 
		Then, we prove that if there exists $t$, $t=2, \dots, T$, such that $\omega^{\prime}_{t+1}$ is identifiably conditionally ineffective, then  $\omega^{\prime}_{T}$ is ineffective. In the proof, we frequently use  the time consistency of $x^*$ for T-\droV, as well as the  DP reformulations discussed in Sections \ref{sec: MULTI.DP}, \ref{sec: MULTI.DP_path_assessment}, and \ref{sec: MULTI.DP_realization_assessment}, all tailored for the conditional ambiguity sets of the form  \eqref{eq: MULTI.TV_ambiguity_set}.
		
		``$\Longrightarrow$:" To prove the result, we need to show that $Q_{1}^{\text{A}}(\{\omega^{\prime}_{T}\}) < Q_1$. 
		By the hypothesis and using Definition \ref{def: MULTI.cond_eff}, we have  $Q_{t}^{\text{CA}}(\hstarx{t-1}, \hxi{t}^{\omega^{\prime}_{t}}; \{\omega^{\prime}_{t+1}\}) < Q_{t}(\hstarx{t-1}, \hxi{t}^{\omega^{\prime}_{t}})$, $t=1, \ldots, T-1$. Using Lemma \ref{lem: MULTI.cond}\ref{lem: MULTI.cond_eff}, we have 
		\begin{equation}
		\label{eq: MULTI.cond_eff_1}
		\begin{array}{ll}
		& g_{t}(x^{*}_{t}, \xi_{t}^{\omega^{\prime}_{t}}) + \max_{\bs{p}_{t+1} \in  \Cs{P}^{\text{CA}}_{t+1|\omega^{\prime}_{t}} (\{\omega^{\prime}_{t+1}\}) } \ee{\bs{p}_{t+1|\omega^{\prime}_{t}}}{Q_{t+1}(\hstarx{t}, \hxi{t+1}^{\omega_{t+1}})} \\
		& < g_{t}(x^{*}_{t}, \xi_{t}^{\omega^{\prime}_{t}}) + \max_{\bs{p}_{t+1} \in  \Cs{P}_{t+1|\omega^{\prime}_{t}} } \ee{\bs{p}_{t+1|\omega^{\prime}_{t}}}{Q_{t+1}(\hstarx{t}, \hxi{t+1}^{\omega_{t+1}})},
		\end{array}
		\end{equation}
		which for all $\bs{p}_{t+1} \in  \Cs{P}^{\text{CA}}_{t+1|\omega^{\prime}_{t}} (\{\omega^{\prime}_{t+1}\})$ implies 
		\begin{equation}
		\label{eq: MULTI.cond_eff_2}
		\ee{\bs{p}_{t+1}}{Q_{t+1}(\hstarx{t}, \hxi{t+1}^{\omega_{t+1}})} < \max_{\bs{p}_{t+1} \in  \Cs{P}_{t+1|\omega^{\prime}_{t}} } \ee{\bs{p}_{t+1}}{Q_{t+1}(\hstarx{t}, \hxi{t+1}^{\omega_{t+1}})}. 
		\end{equation}
		Let us look into the values of $Q_{t}^{\text{A}}(\hstarx{t-1}, \hxi{t}^{\omega}; \{ \omega^{\prime}_{T} \})$ and $Q_{t}(\hstarx{t-1}, \hxi{t}^{\omega_{t}})$ on the whole scenario tree.  Recall by Remark \ref{rem: MULTI.DP_path} that the only realization whose  cost-to-go function at stage $t$ is affected by the removal of the scenario path $\omega^{\prime}_{T}$ is $\omega^{\prime}_{t}$, $t=2, \ldots, T-1$.  That is, for a feasible policy $x$ to T-\droV,  \eqref{eq: MULTI.Proj_Observation} simplifies to 
		\begin{equation}
		\label{eq: MULTI.Proj_Observation_Simple}
		Q_{t}^{\text{A}}(\hx{t-1}, \hxi{t}^{\omega_{t}};\{\omega^{\prime}_{T}\})=Q_{t}(\hx{t-1}, \hxi{t}^{\omega_{t}}) \quad \text{if} \quad \omega_{t} \neq \omega^{\prime}_{t}, \quad t=2, \ldots, T-1. 
		\end{equation}
		Now, we look into $Q_{t}^{\text{A}}(\hstarx{t-1}, \hxi{t}^{\omega^{\prime}}; \{ \omega^{\prime}_{T} \})$ and $Q_{t}(\hstarx{t-1}, \hxi{t}^{\omega^{\prime}_{t}})$ along the scenario path $\omega^{\prime}_{T}$.
		Starting at stage $t=T-1$, we have
		\begin{equation}
		\label{eq: MULTI.cond_eff_T}
		\begin{array}{ll}
		& Q_{T-1}^{\text{A}}(\hstarx{T-2}, \hxi{T-1}^{\omega^{\prime}_{T-1}}; \{\omega^{\prime}_{T}\})  \\
		& \le g_{T-1}(x^{*}_{T-1}, \xi_{T-1}^{\omega^{\prime}_{T-1}}) + \max_{\bs{p}_{T} \in  \Cs{P}^{\text{A}}_{T| \omega^{\prime}_{T-1}(\{\omega^{\prime}_{T}\}) } }  \ee{\bs{p}_{T}}{Q_{T}(\hstarx{T-1}, \hxi{T}^{\omega_{T}})} \\
		& =g_{T-1}(x^{*}_{T-1}, \xi_{T-1}^{\omega^{\prime}_{T-1}}) + \max_{\bs{p}_{T} \in  \Cs{P}^{\text{CA}}_{T| \omega^{\prime}_{T-1}(\{\omega^{\prime}_{T}\}) } }  \ee{\bs{p}_{T}}{Q_{T}(\hstarx{T-1}, \hxi{T}^{\omega_{T}})}  \\
		& < g_{T-1}(x^{*}_{T-1}, \xi_{T-1}^{\omega^{\prime}_{T-1}}) + \max_{\bs{p}_{T} \in  \Cs{P}_{T| \omega^{\prime}_{T-1}} }  \ee{\bs{p}_{T}}{Q_{T}(\hstarx{T-1}, \hxi{T}^{\omega_{T}})}  \\
		& = Q_{T-1}(\hstarx{T-2}, \hxi{T-1}^{\omega^{\prime}_{T-1}}),
		\end{array}
		\end{equation}
		where the first inequality is due to the suboptimality of $x^*$ to the assessment problem of the scenario path $\omega^{\prime}_{T}$, the first equality is due to the fact $\Cs{P}^{\text{A}}_{T| \omega^{\prime}_{T-1}(\{\omega^{\prime}_{T}\})}$ and $\Cs{P}^{\text{CA}}_{T| \omega^{\prime}_{T-1}(\{\omega^{\prime}_{T}\})}$ are the same given $\hxi{T-1}^{\omega^{\prime}_{T-1}}$, the second inequality is due to \eqref{eq: MULTI.cond_eff_1}, and the second equality is  due to the  time consistency of $x^*$.  
		Thus, for any $\bs{p}_{T-1} \in \Cs{P}_{T-1|\omega^{\prime}_{T-2}}$ with $p_{T-1}(\omega^{\prime}_{T-1})=0$, i.e., $\bs{p}_{T-1} \in \Cs{P}^{\text{CA}}_{T-1|\omega^{\prime}_{T-2}}(\{\omega^{\prime}_{T-1}\})$, we have 
		\begin{equation}
		\label{eq: MULTI.cond_eff_T-1_1}	  
		\begin{array}{ll}
		& g_{T-2}(x^{*}_{T-2}, \xi_{T-2}^{\omega^{\prime}_{T-2}}) + \ee{\bs{p}_{T-1}}{Q_{T-1}^{\text{A}}(\hstarx{T-2}, \hxi{T-1}^{\omega_{T-1}}; \{\omega^{\prime}_{T}\}) } \\
		& = g_{T-2}(x^{*}_{T-2}, \xi_{T-2}^{\omega^{\prime}_{T-2}}) + \ee{\bs{p}_{T-1}}{Q_{T-1}(\hstarx{T-2}, \hxi{T-1}^{\omega_{T-1}}) }  \\
		& < g_{T-2}(x^{*}_{T-2}, \xi_{T-2}^{\omega^{\prime}_{T-2}})+ \max_{\bs{p}_{T-1} \in  \Cs{P}_{T-1|\omega^{\prime}_{T-2}} }  \ee{\bs{p}_{T-1}}{Q_{T-1}(\hstarx{T-2}, \hxi{T-1}^{\omega_{T-1}}) } \\
		& = Q_{T-2}(\hstarx{T-3}, \hxi{T-2}^{\omega^{\prime}_{T-2}}),	
		\end{array}
		\end{equation}
		where the first equality is due  to \eqref{eq: MULTI.Proj_Observation_Simple}, \eqref{eq: MULTI.cond_eff_T}, and the finiteness of $\hXi{T-1}$. 
		The  inequality comes from \eqref{eq: MULTI.cond_eff_2} and the second equality is due to the time consistency of $x^*$. 
		On the other hand, for any $\bs{p}_{T-1} \in \Cs{P}_{T-1|\omega^{\prime}_{T-2}}$ with $p_{T-1}(\omega^{\prime}_{T-1})>0$, i.e., $\bs{p}_{T-1|\omega^{\prime}_{T-2}} \notin \Cs{P}^{\text{CA}}_{T-1|\omega^{\prime}_{T-2}}(\{\omega^{\prime}_{T-1}\})$, we have
		\begin{equation}
		\label{eq: MULTI.cond_eff_T-1_2}	  
		\begin{array}{ll}
		& g_{T-2}(x^{*}_{T-2}, \xi_{T-2}^{\omega^{\prime}_{T-2}}) + \ee{\bs{p}_{T-1}}{Q_{T-1}^{\text{A}}(\hstarx{T-2}, \hxi{T-1}^{\omega_{T-1}}; \{\omega^{\prime}_{T}\}) }  \\ 
		& < g_{T-2}(x^{*}_{T-2}, \xi_{T-2}^{\omega^{\prime}_{T-2}}) + \ee{\bs{p}_{T-1}}{Q_{T-1}(\hstarx{T-2}, \hxi{T-1}^{\omega_{T-1}}) }  \\
		& < g_{T-2}(x^{*}_{T-2}, \xi_{T-2}^{\omega^{\prime}_{T-2}})+ \max_{\bs{p}_{T-1} \in  \Cs{P}_{T-1|\omega^{\prime}_{T-2}} }  \ee{\bs{p}_{T-1}}{Q_{T-1}(\hstarx{T-2}, \hxi{T-1}^{\omega_{T-1}}) }  \\
		& =  Q_{T-2}(\hstarx{T-3}, \hxi{T-2}^{\omega^{\prime}_{T-2}}),	 
		\end{array}
		\end{equation}
		where the first inequality comes from \eqref{eq: MULTI.Proj_Observation_Simple},  \eqref{eq: MULTI.cond_eff_T}, and the finiteness of $\hXi{T-1}$.  
		Now, note that $\Cs{P}_{T-1|\omega^{\prime}_{T-2}}$ can be written as the union  
		\begin{equation*}
		\sset*{p_{T-1} \in \Cs{P}_{T-1|\omega^{\prime}_{T-2}} }{ p_{T-1}(\omega^{\prime}_{T-1})=0} 
		\cup 
		\sset*{p_{T-1} \in \Cs{P}_{T-1|\omega^{\prime}_{T-2}} }{ p_{T-1}(\omega^{\prime}_{T-1})>0}.
		\end{equation*}
		Thus, putting the  inequalities \eqref{eq: MULTI.cond_eff_T-1_1} and \eqref{eq: MULTI.cond_eff_T-1_2} together, we conclude 
		\begin{equation}
		\label{eq: MULTI.cond_eff_T-1}
		\begin{array}{ll}
		& Q_{T-2}^{\text{A}}(\hstarx{T-3},\hxi{T-2}^{\omega^{\prime}_{T-2}}; \{\omega^{\prime}_{T}\}) \\
		& \le g_{T-2}(x^{*}_{T-2},\xi_{T-2}^{\omega^{\prime}_{T-2}}) +   \max_{\bs{p}_{T-1} \in  \Cs{P}_{T-1|\omega^{\prime}_{T-2}} }  \ee{\bs{p}_{T-1}}{Q_{T-1}^{\text{A}}(\hstarx{T-2}, \hxi{T-1}^{\omega_{T-1}}; \{\omega^{\prime}_{T}\}) }  \\
		& < Q_{T-2}(\hstarx{T-3},\hxi{T-2}^{\omega^{\prime}_{T-2}}), 
		\end{array}
		\end{equation}
		where the first inequality is due to the suboptimality of $x^*$ to the assessment problem of the scenario path $\omega^{\prime}_{T}$. 
		We can reach a similar conclusion in stage $t=T-2$, where we use \eqref{eq: MULTI.cond_eff_T-1}  instead of \eqref{eq: MULTI.cond_eff_T} and use \eqref{eq: MULTI.Proj_Observation_Simple} for $t=T-2$. Continuing this process backward in time, we conclude
		$ Q_{1}^{\text{A}}(\{\omega^{\prime}_{T}\})
		< Q_{1}$, implying the scenario path $\omega^{\prime}_{T}$ is effective for  T-\droV\ using Definition \ref{def: MULTI.path_eff}.
		
		
		``$\Longleftarrow$:"  To prove the result, we construct  a policy $\bar{x}:=[\bar{x}_1, \ldots, \bar{x}_T]$, defined below, and we show that such a policy is optimal to the assessment problem of the scenario path $\omega^{\prime}_{T}$, corresponding to T-\droV.
		Then, by  Definition \ref{def: MULTI.path_eff},  to prove that the scenario path $\omega^{\prime}_{T}$ is ineffective, it suffices to show that for the policy $\bar{x}$, we have 
		$Q_1^{\text{A}}(\{\omega^{\prime}_{T}\})=g_{1}(\bar{x}_{1}, \xi_{1}) +   \max_{\bs{p}_{2} \in  \Cs{P}_{2|\hxi{1}} } \ \ee{\bs{p}_{2}}{ Q_{2}^{\text{A}}(\bar{x}_{[1]}, \hxi{2}; \{\omega^{\prime}_{T}\})}  
		=Q_1$.  
		Note that associated with $\bar{x}$ is an optimal probability $\bar{\bs{p}}:=\bar{\bs{p}}(\bar{x}):=[\bar{\bs{p}}_{2|\hxi{1}},\bar{\bs{p}}_{3|\hxi{2}}, \ldots, \bar{\bs{p}}_{T|\hxi{T-1}}]$.

		For the scenario path $\omega^{\prime}_{T}$, suppose that $\hat{t}+1<T$ denotes the largest stage, where there is an identifiably conditionally ineffective realization. That is, $\omega^{\prime}_{t}$ is either conditionally unidentified or effective, $t=\hat{t}+2, \ldots, T$, while $\omega^{\prime}_{\hat{t}+1}$ is identifiably conditionally ineffective. 
		We construct a policy $\bar{x}$ as  $\bar{x}=[x^{*}_{1}, \ldots, x^{*}_{\hat{t}}, \hat{x}_{\hat{t}+1}, \ldots, \hat{x}_{T}]$, where $[\hat{x}_{\hat{t}+1}, \ldots, \hat{x}_{T}]$ is a (partial) policy obtained from solving the assessment problem in stage $\hat{t}+1$, given $\bar{x}_{[\hat{t}]}$ and $\hxi{\hat{t}+1}$, with the ambiguity sets  formed as in \eqref{eq: MULTI.TV_ambiguity_set}. 

		We start the proof by showing that  the policy $\bar{x}$, constructed as above, is optimal for the assessment problem of the scenario path $\omega^{\prime}_{T}$. 
		In order to do this, we first claim that  $\bs{\bar{p}}_{\hat{t}+1}=\bs{p}^{*}_{\hat{t}+1}$
		for all $\bs{p}^*_{\hat{t}+1 } \in \Cs{P}^{*}_{t+1|\omega^{\prime}_{\hat{t}} }$ (defined as in \eqref{eq: Pstar} for $t=\hat{t}$ and $\omega^{\prime}_{\hat{t}}$) and  $\bar{\bs{p}}_{\hat{t}+1 } \in \bar{\Cs{P}}^{\text{A}}_{t+1|\omega^{\prime}_{\hat{t}} }(\{\omega^{\prime}_{T}\})$ (defined as in \eqref{eq: Pbar} for $\Cs{S}=\{\omega^{\prime}_{T}\}$, $t=\hat{t}$, and $\hxi{t}=\hxi{t}^{\omega^{\prime}_{\hat{t}}}$). 
		Then, we use the optimality conditions stated in  Lemma \ref{lem: MULTI.OPT_Cond} for the assessment problem. 
		To prove the above claim, first observe that 
		\begin{equation}
		\label{eq: equality}
		Q_{\hat{t}+1}^{\text{A}}(\hbarx{\hat{t}}, \hxi{\hat{t}+1}^{\omega_{\hat{t}+1}};\{\omega^{\prime}_{T}\}) = Q_{\hat{t}+1}^{\text{A}}(\hstarx{\hat{t}}, \hxi{\hat{t}+1}^{\omega_{\hat{t}+1}};\{\omega^{\prime}_{T}\}) \le Q_{\hat{t}+1}(\hstarx{\hat{t}}, \hxi{\hat{t}+1}^{\omega_{\hat{t}+1}}), \quad \omega_{\hat{t}+1} \in \Cs{C}(\omega^{\prime}_{\hat{t}}),     
		\end{equation}
		where the equality is due to $\hstarx{\hat{t}}=\hbarx{\hat{t}}$ by construction, and  the inequality follows a similar argument as in the proof of Proposition \ref{prop: compare_org_asses_path}. 
		Moreover, observe that if $ \omega_{\hat{t}+1} \neq \omega^{\prime}_{\hat{t}+1}$, then  using \eqref{eq: MULTI.Proj_Observation_Simple}  at $t=\hat{t}+1$ and for the feasible policy $\bar{x}$, we have  
		$  Q_{\hat{t}+1}^{\text{A}}(\bar{x}_{[\hat{t}]}, \hxi{\hat{t}+1}^{\omega_{\hat{t}+1}};\{\omega^{\prime}_{T}\})=  Q_{\hat{t}+1}(\bar{x}_{[\hat{t}]}, \hxi{\hat{t}+1}^{\omega_{\hat{t}+1}})$. 
		Hence, considering that $g_{\hat{t}}(x^{*}_{\hat{t}}, \xi_{\hat{t}}^{\omega^{\prime}_{\hat{t}}})= g_{\hat{t}}(\bar{x}_{\hat{t}}, \xi_{\hat{t}}^{\omega^{\prime}_{\hat{t}}})$ by construction, for $\omega_{\hat{t}+1} \neq \omega^{\prime}_{\hat{t}+1}$, we have 
		\begin{equation}
		\label{eq: MULTI.Proj_Observation_Simple_1}
		g_{\hat{t}}(\bar{x}_{\hat{t}}, \xi_{\hat{t}}^{\omega^{\prime}_{\hat{t}}})+Q_{\hat{t}+1}^{\text{A}}(\bar{x}_{[\hat{t}]}, \hxi{\hat{t}+1}^{\omega_{\hat{t}+1}};\{\omega^{\prime}_{T}\})=g_{\hat{t}}(x^{*}_{\hat{t}}, \xi_{\hat{t}}^{\omega^{\prime}_{\hat{t}}})+Q_{\hat{t}+1}(\hstarx{\hat{t}}, \hxi{\hat{t}+1}^{\omega_{\hat{t}+1}}). 
		\end{equation}
		If $Q_{\hat{t}+1}(\hstarx{\hat{t}}, \hxi{\hat{t}+1}^{\omega^{\prime}_{\hat{t}+1}})\!=\!Q_{\hat{t}+1}^{\text{A}}(\bar{x}_{[\hat{t}]}, \hxi{\hat{t}+1}^{\omega^{\prime}_{\hat{t}+1}};\{\omega^{\prime}_{T}\})$, and hence, $g_{\hat{t}}(x^{*}_{\hat{t}}, \xi_{\hat{t}}^{\omega^{\prime}_{\hat{t}}})+Q_{\hat{t}+1}(\hstarx{\hat{t}}, \hxi{\hat{t}+1}^{\omega^{\prime}_{\hat{t}+1}})\!\!=\!\! \!\!g_{\hat{t}}(\bar{x}_{\hat{t}}, \xi_{\hat{t}}^{\omega^{\prime}_{\hat{t}}})+  Q_{\hat{t}+1}^{\text{A}}(\bar{x}_{[\hat{t}]}, \hxi{\hat{t}+1}^{\omega^{\prime}_{\hat{t}+1}};\{\omega^{\prime}_{T}\})$, combined  with \eqref{eq: MULTI.Proj_Observation_Simple_1}, we conclude that $\bar{p}_{\hat{t}+1}(\omega_{\hat{t}+1})  =p^{*}_{\hat{t}+1}(\omega_{\hat{t}+1})$ for $\omega_{\hat{t}+1} \in \Cs{C}(\omega^{\prime}_{\hat{t}}) $ and for all $\bs{p}^*_{\hat{t}+1} \in \Cs{P}^{*}_{\hat{t}+1|\omega^{\prime}_{\hat{t}} }$ and  $\bar{\bs{p}}_{\hat{t}+1 } \in \bar{\Cs{P}}^{\text{A}}_{\hat{t}+1|\omega^{\prime}_{\hat{t}} }(\{\omega^{\prime}_{T}\})$. 
		
		Now, suppose that $Q_{\hat{t}+1}^{\text{A}}(\bar{x}_{[\hat{t}]}, \hxi{\hat{t}+1}^{\omega^{\prime}_{\hat{t}+1}};\{\omega^{\prime}_{T}\}) <  Q_{\hat{t}+1}(\hstarx{\hat{t}}, \hxi{\hat{t}+1}^{\omega^{\prime}_{\hat{t}+1}})$. To prove the claim, we first investigate which of the four primal categories (recall the definition of primal categories  in Section \ref{sec: TV.background} and in the context of \eqref{eq: TV.robust}) a realization $\omega_{\hat{t}+1} \in \Cs{C}(\omega^{\prime}_{\hat{t}})$ can belong to at $\bar{x}$. 
		Observe that considering that $g_{\hat{t}}(x^{*}_{\hat{t}}, \xi_{\hat{t}}^{\omega^{\prime}_{\hat{t}}})= g_{\hat{t}}(\bar{x}_{\hat{t}}, \xi_{\hat{t}}^{\omega^{\prime}_{\hat{t}}})$ and because we assume that $Q_{\hat{t}+1}^{\text{A}}(\bar{x}_{[\hat{t}]}, \hxi{\hat{t}+1}^{\omega^{\prime}_{\hat{t}+1}};\{\omega^{\prime}_{T}\}) <  Q_{\hat{t}+1}(\hstarx{\hat{t}}, \hxi{\hat{t}+1}^{\omega^{\prime}_{\hat{t}+1}})$, the primal category of $\omega^{\prime}_{\hat{t}+1}$ at $\bar{x}$ will be  either the same as that at $x^{*}$ or lower  (in index) than that at $x^{*}$. 
		For  ease of exposition, we adopt the following notation:
		\begin{align*}
		& \gamma:=\gamma_{\hat{t}+1}; \quad  \Cs{C}:= \Cs{C}(\omega^{\prime}_{\hat{t}}); \quad 
		\omega:= \omega_{\hat{t}+1}, \quad \omega_{\hat{t}+1} \in \Cs{C}(\omega^{\prime}_{\hat{t}});  \quad 
		\omega^{\prime}:=\omega^{\prime}_{\hat{t}+1}, \\
		& q(\omega):=q_{\hat{t}+1|\omega^{\prime}_{\hat{t}}}(\omega_{\hat{t}+1}), \quad 
		p^{*}(\omega):=p^*_{\hat{t}+1}(\omega_{\hat{t}+1}), \quad 
		\bar{p}(\omega):=\bar{p}_{\hat{t}+1}(\omega_{\hat{t}+1}), \quad \omega_{\hat{t}+1} \in \Cs{C}(\omega^{\prime}_{\hat{t}}), \\
		& y^{*}:=x^{*}_{[\hat{t}]}; \quad \bar{y}:=\bar{x}_{[\hat{t}]}, \\
		& h(y^{*},\omega):=g_{\hat{t}}(x^{*}_{\hat{t}}, \xi_{\hat{t}}^{\omega^{\prime}_{\hat{t}}})+Q_{\hat{t}+1}(\hstarx{\hat{t}}, \hxi{\hat{t}+1}^{\omega_{\hat{t}+1}}), \quad \omega_{\hat{t}+1} \in \Cs{C}(\omega^{\prime}_{\hat{t}}),  \\
		& h^{\text{A}}(\bar{y}, \omega):=g_{\hat{t}}(\bar{x}_{\hat{t}}, \xi_{\hat{t}}^{\omega^{\prime}_{\hat{t}}}) + Q_{\hat{t}+1}^{\text{A}}(\bar{x}_{[\hat{t}]}, \hxi{\hat{t}+1}^{\omega_{\hat{t}+1}};\{\omega^{\prime}_{T}\}), \quad \omega_{\hat{t}+1} \in \Cs{C}(\omega^{\prime}_{\hat{t}}), \\
		& \sup_{\omega \in \Cs{C}} h(y^{*},\omega):=\mathrm{sup}_{|\hxi{\hat{t}}^{\omega^{\prime}_{\hat{t}}}}\ \left[g_{\hat{t}}(x^{*}_{\hat{t}}, \xi_{\hat{t}}^{\omega^{\prime}_{\hat{t}}})+Q_{\hat{t}+1}(\hstarx{\hat{t}}, \hxi{\hat{t}+1}^{\omega_{\hat{t}+1}})\right],\\
		& \sup_{\omega \in \Cs{C}} h^{\text{A}}(\bar{y},\omega):=\mathrm{sup}_{|\hxi{\hat{t}}^{\omega^{\prime}_{\hat{t}}}}\ \left[g_{\hat{t}}(\bar{x}_{\hat{t}}, \xi_{\hat{t}}^{\omega^{\prime}_{\hat{t}}}) + Q_{\hat{t}+1}^{\text{A}}(\bar{x}_{[\hat{t}]}, \hxi{\hat{t}+1}^{\omega_{\hat{t}+1}};\{\omega^{\prime}_{T}\})\right],\\
		& \vvar{\gamma}{h(y^{*},\omega)}:=\vvar{\gamma_{\hat{t}+1}}{g_{\hat{t}}(x^{*}_{\hat{t}}, \xi_{\hat{t}}^{\omega^{\prime}_{\hat{t}}})+Q_{\hat{t}+1}(\hstarx{\hat{t}}, \hxi{\hat{t}+1}^{\omega_{\hat{t}+1}}) \; \Big| \;  \hxi{t}^{\omega^{\prime}_{\hat{t}} } },\\
		& \vvar{\gamma}{h^{\text{A}}(\bar{y},\omega)}:=\vvar{\gamma_{\hat{t}+1}}{g_{\hat{t}}(\bar{x}_{\hat{t}}, \xi_{\hat{t}}^{\omega^{\prime}_{\hat{t}}}) + Q_{\hat{t}+1}^{\text{A}}(\bar{x}_{[\hat{t}]}, \hxi{\hat{t}+1}^{\omega_{\hat{t}+1}};\{\omega^{\prime}_{T}\})  \; \Big| \;  \hxi{\hat{t}}^{\omega^{\prime}_{\hat{t}}}}.
		\end{align*}
		Now, let us form $\cat{i}{y^{*}}$ and $\cat{i}{\bar{y}}$, $i=1,2,3,4$, as described in Section \ref{sec: TV.background}. 
		Using the above notation, \eqref{eq: MULTI.Proj_Observation_Simple_1} is stated as $h(y^{*},\omega)=h^{\text{A}}(\bar{y},\omega)$, $\omega \neq \omega^{\prime}$. Also, by the hypothesis, we have $h(y^{*},\omega^{\prime})>h^{\text{A}}(\bar{y},\omega^{\prime})$. 
		Because $\omega^{\prime}$ is identifiably conditionally ineffective, then by  Lemma \ref{lem: MULTI.cond}\ref{lem: MULTI.cond_ineff}, we have $p^*(\omega^{\prime})=0$. Moreover, by \citep[Theorems~1]{rahimian2019}, we have $\omega^{\prime} \notin \cat{4}{y^{*}}$. Thus, $\omega^{\prime} \notin \cat{4}{\bar{y}}$. This implies $\cat{4}{y^{*}}=\cat{4}{\bar{y}}$ and $\sup_{\omega \in \Cs{C}} h(y^{*},\omega)=\sup_{\omega \in \Cs{C}} h^{\text{A}}(\bar{y},\omega)$.
		Let us consider the following cases: 
		\begin{nested}
			\item $\omega^{\prime} \in \cat{1}{y^{*}}$: Thus, $\omega^{\prime} \in \cat{1}{\bar{y}}$ and $\vvar{\gamma}{h(y^{*},\omega)}=\vvar{\gamma}{h^{\text{A}}(\bar{y},\omega)}$. Consequently, the formation of primal categories is the same at $y^{*}$ and $\bar{y}$, and we have $\bar{p}(\omega)=p^{*}(\omega)$, $\omega \in \Cs{C}$. 
			
			\item $\omega^{\prime}\!\in\!\cat{2}{y^{*}} \cup \cat{3}{y^{*}}$ and $q(\omega^{\prime})=0$: Because $q(\omega^{\prime})\!=\!0$, we have $\vvar{\gamma}{h(y^{*},\omega)}  =\vvar{\gamma}{h^{\text{A}}(\bar{y},\omega)}$. On the other hand, $\bar{p}(\omega^{\prime})=0$ by \citep[Proposition~4]{rahimian2019}. Thus, although the primal category of $\omega^{\prime}$ might change, we can conclude $\bar{p}(\omega)=p^{*}(\omega)$, $\omega \in \Cs{C}$, because the primal category of all other scenarios $\omega^{\prime} \neq \omega \in \Cs{C}$ does not change. 
			
			\item $\omega^{\prime} \in \cat{2}{y^{*}}$ and $q(\omega^{\prime})>0$: If there is no other scenario in $\cat{2}{y^{*}}$ with a positive nominal probability, then, $\vvar{\gamma}{h(y^{*},\omega)}>\vvar{\gamma}{h^{\text{A}}(\bar{y},\omega)}$. Because $\sum_{\omega \in \cat{2}{y^{*}}} p^{*}(\omega)=0$, we have $\sum_{\omega \in \cat{1}{y^{*}} \cup \cat{2}{y^{*}}} q(\omega)=\gamma$ by  \citep[Proposition~4]{rahimian2019}. On the other hand, 
			we must have $q(\omega)=0$ for all $\omega \in \Cs{M}_{3}:=\big\{\omega \in \Cs{C}: \vvar{\gamma}{h^{\text{A}}(\bar{y},\omega)} < h^{\text{A}}(\bar{y},\omega) < \vvar{\gamma}{h(y^{*},\omega)}\big\}$ because 
			$\gamma \le \sum_{\omega \in \cat{1}{\bar{y}} \cup \cat{2}{\bar{y}}} q(\omega)\le \sum_{\omega \in \cat{1}{y^{*}} \cup \cat{2}{y^{*}}} q(\omega) = \gamma$.
			Thus, although scenarios $\omega \in \Cs{M}_{3}$ move from $\cat{1}{y^{*}}$ to $\cat{3}{\bar{y}}$, by \citep[Proposition~4]{rahimian2019}, we must have $\bar{p}(\omega)=0$ for $\omega \in \Cs{M}_{3}$ because $q(\omega)=0$.  
			Moreover, because $\sum_{\omega \in \cat{1}{\bar{y}} \cup \cat{2}{\bar{y}}} q(\omega)=\gamma$, we must have $\bar{p}(\omega)=0$, $\omega \in \Cs{M}_{2}:=\big\{\omega \in \Cs{C}: h^{\text{A}}(\bar{y},\omega) = \vvar{\gamma}{h^{\text{A}}(\bar{y},\omega)} \big\} $ by  \citep[Proposition~4]{rahimian2019}.  Thus, although  scenarios $\omega \in \Cs{M}_{2}$ move from $\cat{1}{y^{*}}$ to $\cat{2}{\bar{y}}$, there is no change in their worst-case probabilities. All the other scenarios keep the same primal category as that at  $y^{*}$. Consequently, $\bar{p}(\omega)=p^{*}(\omega)$, $\omega \in \Cs{C}$. 
			Now, if there exists another scenario in $\cat{2}{y^{*}}$ with a positive nominal probability, then $\vvar{\gamma}{h(y^{*},\omega)}=\vvar{\gamma}{h^{\text{A}}(\bar{y},\omega)}$. 
			Thus, all scenarios, except for $\omega^{\prime}$ that moves to $\cat{1}{\bar{y}}$, keep their primal categories as that at $y^{*}$. Nevertheless, we have $\bar{p}(\omega)=p^{*}(\omega)$, $\omega \in \Cs{C}$. 
		\end{nested}
		In all the above cases, we concluded $\bar{p}(\omega)=p^{*}(\omega)$, $\omega \in \Cs{C}$.
		Now, let us bring back our general notation. 
		The above is equivalent to $\bs{p}^{*}_{\hat{t}+1}=\bar{\bs{p}}_{\hat{t}+1}$, for all $\bs{p}^*_{\hat{t}+1} \in \Cs{P}^{*}_{\hat{t}}$ and  $\bar{\bs{p}}_{\hat{t}+1} \in \bar{\Cs{P}}^{\text{A}}_{\hat{t}+1|\omega^{\prime}_{\hat{t}} }(\{\omega^{\prime}_{T}\})$;  thus, the claim is proved.
		
		To prove the optimality of $\bar{x}$ to the assessment problem we use Lemma \ref{lem: MULTI.OPT_Cond}. Recall that  $[\hat{x}_{\hat{t}+1}, \ldots, \hat{x}_{T}]$ is a (partial) policy obtained from solving the assessment problem in stage $\hat{t}+1$, given $\bar{x}_{[\hat{t}]}$ and $\hxi{\hat{t}+1}$, with the conditional ambiguity sets  formed as in \eqref{eq: MULTI.TV_ambiguity_set}. Thus, optimality conditions stated in Lemma \ref{lem: MULTI.OPT_Cond} hold for $t=T, \ldots, \hat{t}+1$ by construction. Now, let us focus on stage $\hat{t}$. Recall by Remark \ref{rem: MULTI.DP_path} that the only realization whose  cost-to-go function at stage $\hat{t}$ is affected by the removal of the scenario path $\omega^{\prime}_{T}$ is $\omega^{\prime}_{\hat{t}}$. Thus, it remains to verify the optimality conditions for  realization $\omega^{\prime}_{\hat{t}}$. 
		Consider the optimality conditions 
		$
		0 \in \partial g_{\hat{t}}(x^{*}_{\hat{t}}, \xi_{\hat{t}}^{\omega^{\prime}_{\hat{t}}})+ \Cs{N}_{\Cs{X}_{\hat{t}}}(x^{*}_{\hat{t}})+ \conv{\bigcup_{\bs{p}_{\hat{t}+1} \in  \Cs{P}^{*}_{\hat{t}+1|\omega^{\prime}_{\hat{t}}} } \ee{\bs{p}_{\hat{t}+1}}{\partial Q_{\hat{t}+1}(\hstarx{\hat{t}}, \xi_{[\hat{t}+1]}^{\omega_{\hat{t}+1}})}  }, \ \textrm{and} \ 
		0 \in \partial g_{\hat{t}}(\bar{x}_{\hat{t}}, \xi_{\hat{t}}^{\omega^{\prime}_{\hat{t}}})+ \Cs{N}_{\Cs{X}_{\hat{t}}}(\bar{x}_{\hat{t}})+ \conv{\bigcup_{\bs{p}_{\hat{t}+1} \in  \bar{\Cs{P}}^{\textrm{A}}_{\hat{t}+1|\omega^{\prime}_{\hat{t}}}(\{\omega^{\prime}_{T}\}) } \ee{\bs{p}_{\hat{t}+1}}{\partial Q_{\hat{t}+1}^{\text{A}}(\bar{x}_{[\hat{t}]}, \xi_{[\hat{t}+1]}^{\omega_{\hat{t}+1}}; \{\omega^{\prime}_{T}\}) }}. 
		$
		Observe that the right-hand sides of the optimality conditions above are the same because of the following reasons: (i) $\hstarx{\hat{t}}=\hbarx{\hat{t}}$ by construction, (ii)  by \eqref{eq: MULTI.Proj_Observation_Simple},  $Q_{\hat{t}+1}(\hstarx{\hat{t}}, \xi_{[\hat{t}+1]}^{\omega_{\hat{t}+1}})$ and  $Q_{\hat{t}+1}^{\text{A}}(\bar{x}_{[\hat{t}]}, \xi_{[\hat{t}+1]}^{\omega_{\hat{t}+1}}; \{\omega^{\prime}_{T}\}) $ are the exact same functions, for  $\omega_{\hat{t}+1} \neq \omega^{\prime}_{\hat{t}+1}$; hence, they have the same subdifferentials, 
		and (iii)   $\Cs{P}^{*}_{\hat{t}+1|\omega^{\prime}_{\hat{t}}}=\bar{\Cs{P}}^{\textrm{A}}_{\hat{t}+1|\omega^{\prime}_{\hat{t}}}$; in particular, using Lemma \ref{lem: MULTI.cond}\ref{lem: MULTI.cond_ineff} we have $\bar{p}_{\hat{t}+1}(\omega^{\prime}_{\hat{t}+1})=p^{*}_{\hat{t}+1}(\omega^{\prime}_{\hat{t}+1})=0$ for any $\bar{\bs{p}}_{\hat{t}+1} \in \bar{\Cs{P}}^{\textrm{A}}_{\hat{t}+1|\omega^{\prime}_{\hat{t}}}(\{\omega^{\prime}_{T}\})$ and for any $\bs{p}^{*}_{\hat{t}+1} \in \Cs{P}^{*}_{\hat{t}+1|\omega^{\prime}_{\hat{t}}}$. 
		Consequently, although the subdifferentials of $Q_{\hat{t}+1}(\hstarx{\hat{t}}, \xi_{[\hat{t}+1]}^{\omega^{\prime}_{\hat{t}+1}})$ and   $Q_{\hat{t}+1}^{\text{A}}(\bar{x}_{[\hat{t}]}, \xi_{[\hat{t}+1]}^{\omega^{\prime}_{\hat{t}+1}}; \{\omega^{\prime}_{T}\}) $ might be different, they do not contribute to the right-hand sides of the optimality conditions above. 
		Hence, because $x^{*}_{\hat{t}}$ satisfies the optimality condition for T-\droV\ at stage $\hat{t}$, then  $\bar{x}_{\hat{t}}=x^{*}_{\hat{t}}$ satisfies the optimality condition for the respective assessment problem at stage $\hat{t}$. Moreover, it can be seen that 
		$Q_{\hat{t}}(\hstarx{\hat{t}-1}, \xi_{[\hat{t}]}^{\omega^{\prime}_{\hat{t}}})= Q_{\hat{t}}^{\text{A}}(\bar{x}_{[\hat{t}-1]}, \xi_{[\hat{t}]}^{\omega^{\prime}_{\hat{t}}}; \{\omega^{\prime}_{T}\})$. 
		Therefore, this fact and the implication from Remark \ref{rem: MULTI.DP_path} guarantee that because $x^{*}_{\hat{t}-1}$ satisfies the optimality condition for T-\droV\ at stage $\hat{t}-1$, then  $\bar{x}_{\hat{t}-1}=x^{*}_{\hat{t}-1}$ satisfies the optimality condition for the respective assessment problem at stage $\hat{t}-1$. By induction, we conclude that $\bar{x}$ is optimal to the assessment problem of the scenario path $\omega^{\prime}_{T}$ corresponding to T-\droV. 
		
		Additionally, observe that with a similar reasoning we have  $Q_{1}^{\text{A}}(\omega^{\prime}_{T})= Q_{1}$, i.e., the policy $\bar{x}$ incurs an objective function value for the assessment problem of $\omega^{\prime}_{T}$ corresponding to  T-\droV\ that is equal to the optimal value of  T-\droV, incurred by the policy $x^{*}$. 
		Consequently, by Definition \ref{def: MULTI.path_eff}, we conclude that the scenario path $\omega^{\prime}_{T}$ is ineffective. 
		The case that $\hat{t}+1=T$, i.e., $\omega^{\prime}_{T}$ is conditionally ineffective, can be argued similarly, although the notation slightly simplifies. This completes the proof. 
	\end{proof}
	
	\section{Numerical Illustration}
	\label{sec: MULTI.numerics}	
	

	\subsection{Experimental Setup}
	\label{sec: MULTI.setup}
	To conduct numerical analyses, we used a version of the water allocation problem  described in \citep{zhang2016decomposition}, where now water treatment facilities are subject to disruptions. 
	This problem addresses the allocation of Colorado River water among different users, while meeting uncertain water demand and not exceeding uncertain water supply.
	The problem originally has  4 stages with the  planning period 2010--2050, where both supply  and demand uncertainties appear in the right-hand side of constraints. 
	To examine more complex uncertainties besides only the right-hand-side of the constraints and for visual illustration of managerial insights, we generated a smaller three-stage variant of this problem with a 26-year planning horizon as follows. We considered low (L) and high (H) realizations for both the supply and demand uncertainties. In addition, we introduced a random variable to specify whether one of the wastewater treatment plants is disrupted (D) or nondisrupted (N). This leads to randomness in the recourse matrix and can be interpreted as disruption due to natural or man-made disasters. 
	The treatment plant treats wastewater and pumps it back into the system to meet nonpotable (i.e., nondrinkable) water demands like irrigation of parks. 
	This variant has 8 realizations per node ($8^2=64$ scenario paths), and we denote it by WATERS3N8. 
	For reference, the labels in the second stage of Figure \ref{fig: MULTI.WATERS3N8}, given as a triplet of (supply, demand, treatment facility), illustrate the second-stage realizations. This pattern repeats in the third stage as well. With respect to the nominal distribution, the stochastic process is interstage independent and all realizations are equally likely for simplicity. We tested interstage dependent cases as well, and the results were similar. 
	We modeled the distributional ambiguity via the total variation distance and   assumed that the levels of robustness  for all stages are  the same (i.e., $\gamma_t=\gamma$ for all  $t$, $t=2, \ldots, T$). To run the experiments, we varied $\gamma$ between 0 and 1 in increments of $0.05$. 
	We implemented a nested Benders' decomposition algorithm 
	in C++ on top of SUTIL \cite{SUTIL} to solve the problems. 
	All problems were solved using CPLEX 12.9 on a Linux Ubuntu environment  with an Intel Core i7-2640M 2.8 GHz  processor
	and 8.00 GB RAM.

	\subsection{Managerial Insights} 
	\label{sec: insight}
	
	Our proposed easy-to-check  conditions (Theorem \ref{thm: MULTI.cond_eff}) identify the conditional effectiveness of all realizations at all tested levels of robustness for this problem. Hence, by Theorem \ref{thm: MULTI.path_eff_ineff}, the effectiveness of  all scenario paths is identified. 
	Figure \ref{fig: MULTI.WATERS3N8} illustrates  the results for some ranges of values of $\gamma$ to examine the evolution of effective scenario paths as the level of robustness increases. In all figures, conditionally effective realizations are shown with  solid lines and conditionally ineffective realizations are shown with dotted lines. The right panels in Figures \ref{fig: MULTI.WATERS3N8_1}--\ref{fig: MULTI.WATERS3N8_6} illustrate the  worst-case probability distribution on the stochastic process $\xi$ at levels of robustness $\gamma=0.25, 0.5, 0.8, 1$, respectively. 
	At all depicted $\gamma$ values, we have a positive worst-case probability for effective scenario paths and 
	zero worst-case probability for ineffective scenario paths.\textbf{}
	
	Figure \ref{fig: MULTI.WATERS3N8_1} illustrates that the effective/ineffective pattern can change across stages. For instance, a LLN realization can be conditionally effective in one stage and not the other. Indeed, we see that, for low levels of $\gamma$,   in stage~2 the conditionally effective realizations are those with either low supply $(\textrm{L},\cdot, \cdot)$ or high demand $(\cdot, \textrm{H}, \cdot)$, regardless of the availability of the treatment facility. In contrast, in stage~3  the conditionally effective realizations are those with either high demand $(\cdot, \textrm{H}, \cdot)$ or disrupted treatment facility $(\cdot, \cdot, \textrm{D})$, regardless of the supply level. This suggests that, for this level of robustness, in the short term the supply of water  is more critical than the availability of the treatment facility, but  in the long term the opposite happens. One explanation for this fact is that when the treatment facility is not available, the system has to procure water from expensive external sources to meet the demand and water shortages are more pronounced in the third stage. 
	
	In  Figure \ref{fig: MULTI.WATERS3N8_3}, we see that for mid-levels of $\gamma$ the conditionally effective realizations are the same in both stages 2 and 3, and correspond only to those with  high demand. As the value of $\gamma$ increases in Figure \ref{fig: MULTI.WATERS3N8_5}, we see some interesting dynamics: although  the conditionally effective realizations  in stage~3 still include some  scenarios with high supply (as long as demand is high and the facility is disrupted), in terms of effectiveness of the \textit{entire path}, the only critical scenarios are those with low supply and high demand in both stages.
	This again highlights that high water deficit 
	is more critical than the treatment plant disruptions for the studied problem.
	Finally, for the highest values of $\gamma$, Figure \ref{fig: MULTI.WATERS3N8_6} shows that there is only one critical scenario path, which corresponds to the case where all the uncertainty is unfavorable:  there is low supply, high demand, and a disrupted treatment facility in both stages. 
	We infer from the analysis that  demand appears to be the most critical uncertainty component, followed by supply. The availability of the treatment facility, although has some impact especially at later stages, appears to be less critical than the other two components.

	\begin{figure}[!htbp]
		\centering
		\subfloat[][{$\gamma \in [0.25, 0.35]$}.]{\label{fig: MULTI.WATERS3N8_1}\includegraphics[width=0.32\linewidth]{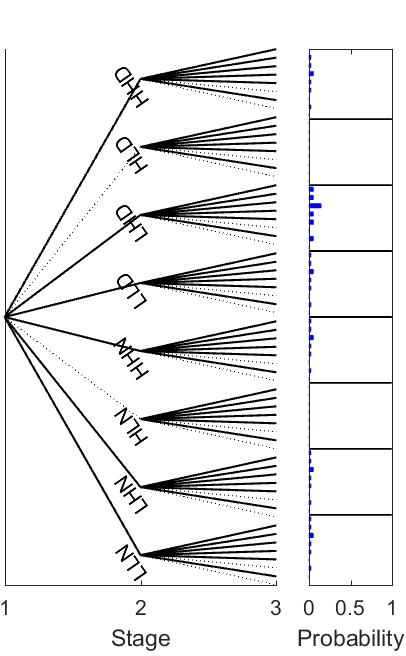}}
		\subfloat[][{$\gamma \in [0.5, 0.60]$}.]{\label{fig: MULTI.WATERS3N8_3}\includegraphics[width=0.32\linewidth]{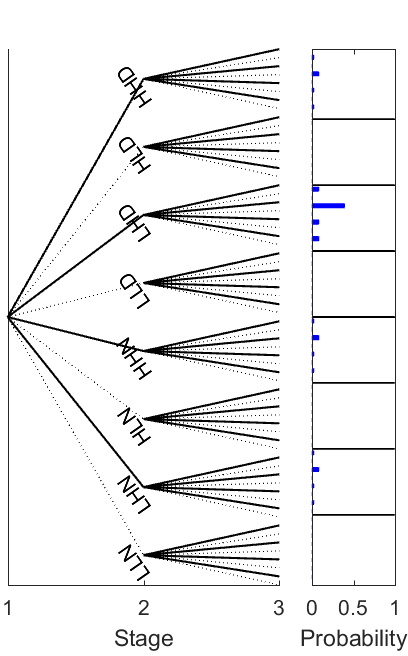}}\\
		\subfloat[][{$\gamma \in [0.75, 0.85]$}.]{\label{fig: MULTI.WATERS3N8_5}\includegraphics[width=0.32\linewidth]{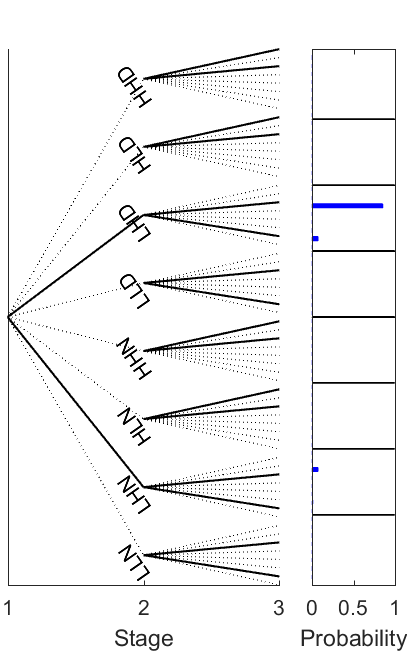}}
		\subfloat[][{$\gamma \in [0.9, 1]$}.]{\label{fig: MULTI.WATERS3N8_6}\includegraphics[width=0.32\linewidth]{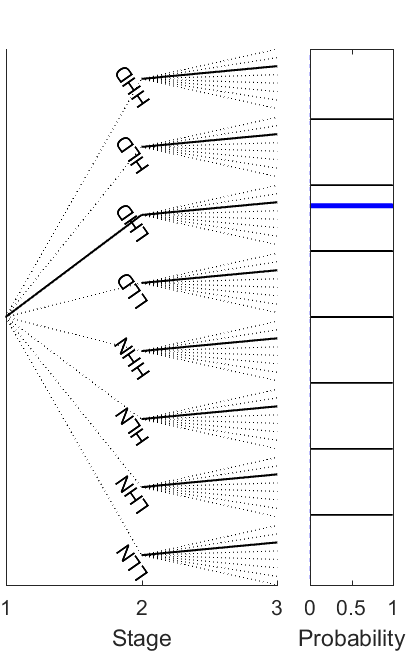}}
		\caption{\label{fig: MULTI.WATERS3N8} The effectiveness of realizations and scenario paths, identified by the easy-to-check conditions, as well as the  worst-case probability distribution of the stochastic process, for  WATERS3N8. Conditionally effective realizations are shown with thick solid black   lines, and conditionally ineffective realizations are shown with thin  dotted  black lines on the scenario tree. Labels on the second stage refer to a realization with a triplet (supply, demand, treatment facility), where supply and demand $\in \{\textrm{L},\textrm{H}\}$ and treatment plant $\in \{\textrm{D}, \textrm{N}\}$. For instance, the triplet $(\textrm{L},\textrm{H}, \textrm{N})$ stands for a realization with a low supply, high demand, and a nondisrupted treatment plant.}
	\end{figure}

	\section{Conclusions} \label{sec: MULTI.discuss}
	
	In this paper, we investigated a general class of multistage distributionally robust convex stochastic optimization (multistage \dro) problems. 
	Under a finite sample space, we 
	defined the notions of the conditional effectiveness of realizations and the effectiveness of scenario paths for a multistage \dro.  
	By exploiting the specific structures of the ambiguity set formed via the total variation distance, we  proposed easy-to-check conditions to identify the conditional effectiveness of  realizations and effectiveness of scenario paths for a  multistage \droV. 
	Our main result establishes an important and practical connection between conditional effectiveness and the effectiveness of the entire scenario path. This allows to easily identify the critical scenarios in multistage \droV, which can be  significantly more difficult than the two-stage setting. 
	By means of a practical application to a water resources allocation problem, we  illustrated how these notions can be used to help decision makers gain insight on the underlying uncertainty of a multistage problem. 
	
	Future work includes investigating easy-to-check conditions for other classes of ambiguity sets as well as the insights obtained from the the conditional effectiveness of  realizations and effectiveness of scenario paths in   general multistage \dro s. From a computational perspective, it would also be interesting to explore how these notions can be used to reduce scenarios in the multistage setting and to refine approximations of  cost-to-go functions in decomposition algorithms to accelerate such algorithms. 

	{\small
		\bibliographystyle{siamplain} 
		\bibliography{bibfile}  
	}
\end{document}